%BeginFileInfo
%%Publisher=ARXIV
%%Project=AOP
%%Manuscript=AOP869
%EndFileInfo
%
% Institute of Mathematical Statistics (IMI)
% Journal "The Annals of Probabability"

%secthm,,secfloat,nameyear,number,noautosecdot
\documentclass[aop,MSNbibl,seceqn,dvips]{arximspdf}
\usepackage{accents}
\usepackage{mathbh}

% settings
%

% article settings
\doi{10.1214/13-AOP869} %kopijuoti is PTS
\volume{42}
\issue{2}
\pubyear{2014}
\firstpage{818}
\lastpage{864}

\makeatletter
\newcommand{\rrVert}{\Vert}
\newcommand{\rrvert}{\vert}
\newcommand{\llVert}{\Vert}
\newcommand{\llvert}{\vert}
\renewcommand{\mathring}[1]{\accentset{\circ}{#1}}
\newcommand{\eqref}[1]{(\ref{#1})}

\newcommand{\mcB}{\mathcal{B}}

\newproclaim{definition}{Definition}[section]
\newproclaim{remark}[definition]{Remark}
\newtheorem{lemma}[definition]{Lemma}
\newproclaim{example}[definition]{Example}
\newtheorem{proposition}[definition]{Proposition}
\newtheorem{theorem}[definition]{Theorem}
\newtheorem{corollary}[definition]{Corollary}

% \setcounter{section}{-1}

%%%%%%%%%%%%%%%%%%%%%%%%%%%%%%%%%%%%%%%%%%%%%%%%%%%%%%%%%
%%%%%%%%%%%%%%%%%%%%%%%%%%%%%%%%%%%%%%%%%%%%%%%%%%%%%%%%%
% Renew
\newcommand{\td}{\tilde}
\newcommand{\z}{\zeta}
\renewcommand{\P}{\mathbb{P}}
\renewcommand{\b}{\beta}
\renewcommand{\k}{\kappa}
\renewcommand{\l}{\lambda}
\renewcommand{\O}{\Omega}
\renewcommand{\t}{\theta}
\renewcommand{\div}{\operatorname{div}}

%%%%%%%%%%%%%%%%%%%%%%%%%%%%%%%%%%%%%%%%%%%%%%%%%%%%%%%%%
%%%%%%%%%%%%%%%%%%%%%%%%%%%%%%%%%%%%%%%%%%%%%%%%%%%%%%%%%

%%%%%%%%%%%%%%%%%%%%%%%%%%%%%%%%%%%%%%%%%%%%%%%%%%%%%%%%%
% Abbreviations:

%%%%%%%%%%%%%%%%%%%%%%%%%%%%%%%%%%%%%%
%Script char:
\newcommand{\mcO}{\mathcal{O}}
\newcommand{\mcF}{\mathcal{F}}
\newcommand{\mcK}{\mathcal{K}}
\newcommand{\mcP}{\mathcal{P}}
\newcommand{\mcD}{\mathcal D}

%%%%%%%%%%%%%%%%%%%%%%%%%%%%%%%%%%%%%%
% misc:
\newcommand{\R}{\mathbb{R}}
\newcommand{\N}{\mathbb{N}}

% Greek char:
\newcommand{\D}{\Delta}
\newcommand{\g}{\gamma}
\newcommand{\s}{\sigma}
\newcommand{\vp}{\varphi}
\newcommand{\ve}{\varepsilon}

\makeatother

\begin{document}
\begin{frontmatter}

\title{Random attractors for stochastic porous media equations
perturbed by space--time linear multiplicative noise}
\runtitle{Random attractors for SPME}

\begin{aug}
\author[A]{\fnms{Benjamin} \snm{Gess}\corref{}\ead[label=e1]{gess@math.tu-berlin.de}\thanksref{t1}}
\thankstext{t1}{Supported by DFG-International Graduate College
``Stochastics and Real World Models,'' the SFB-701 and the
BiBoS-Research Center.}
\runauthor{B. Gess}
\affiliation{Humboldt-Universit\"at zu Berlin}
\address[A]{Institut f\"ur Mathematik\\
Humboldt-Universit\"at zu Berlin\\
Rudower Chaussee 25 \\
12489 Berlin\\
Germany\\
\printead{e1}} %adresu isvedimo komanda gale!
\end{aug}

% HISTORY:
\received{\smonth{2} \syear{2012}}
\revised{\smonth{4} \syear{2013}}

% ABSTRACT
%
\begin{abstract}
Unique existence of solutions to porous media equations driven by
continuous linear
multiplicative space--time rough signals is proven for initial data in
$L^1(\mcO)$ on bounded domains $\mcO$.
The generation of a continuous, order-preserving random dynamical
system on $L^1(\mcO)$ and the existence of a~\mbox{random} attractor
for stochastic porous media equations perturbed by
linear multiplicative noise in \textit{space and time}
is obtained. The random attractor is shown to be compact and attracting
in $L^\infty(\mcO)$ norm. Uniform $L^\infty$ bounds
and uniform space--time continuity of the solutions is shown. General
noise including fractional Brownian motion for all Hurst
parameters is treated and a pathwise Wong--Zakai result for driving
noise given by a continuous semimartingale is obtained.
For fast diffusion equations driven by continuous linear multiplicative
space--time rough signals, existence of solutions is
proven for initial data in $L^{m+1}(\mcO)$.
\end{abstract}

% KEYWORDS
% Pirmas kwd is didziosios raides

\begin{keyword}[class=AMS]
\kwd[Primary ]{37L55}
\kwd{76S05}
\kwd[; secondary ]{60H15}
\kwd{37L30}
\end{keyword}
\begin{keyword}
\kwd{Stochastic partial differential equations}
\kwd{stochastic porous medium equation}
\kwd{stochastic fast diffusion equation}
\kwd{random dynamical system}
\kwd{random attractor}
\kwd{Wong--Zakai approximation}
\end{keyword}

\end{frontmatter}

%
%%%-------------------------------------------------------------------------------------------------------
% -------------------------------- Introduction and Setting
%------------------------------------------
%
%%%-------------------------------------------------------------------------------------------------------

%s1 #&#
\section{Introduction}\label{sec1}

The qualitative study of stochastic dynamics induced by stochastic
partial differential equations (SPDE) especially in the case of
non-Markovian noise is based on the theory of random dynamical systems
(RDS); cf., for example, \cite{A98}. Since the foundational work \cite
{S92,CF94,CDF97} the long-time behavior of several quasilinear SPDE has
been investigated by means of the existence of random attractors.
However, all these results are restricted to simple models of the noise
(e.g., additive or real multiplicative\setcounter{footnote}{1}\footnote{By this we mean
multiplicative noise with diffusion coefficients independent of the
spatial variable.}) not including the important case of linear
multiplicative space--time noise. This is mainly due to the difficulty
to even define an associated RDS for more general SPDE. The generation
of an RDS is usually shown by use of a transformation of the SPDE into
a random PDE. Depending on the structure of the noise monotonicity and
coercivity properties of the drift are preserved under this
transformation. For example, this is the case for additive, real linear
multiplicative and for certain transport noise \cite{FL04}. For linear
multiplicative \textit{space--time} noise, however, this is not the case,
thus making the analysis of the random PDE much harder. The generation
of an RDS and the existence of random attractors for stochastic porous
media equations (SPME) with \textit{additive} noise has been obtained in
\cite{BGLR10,GLR11}. A~first approach to tackle the generation of an
RDS for SPME with linear multiplicative space--time noise, that is, for
equations of the form
%
%e1.1 #&#
\begin{equation}
\label{eqnspme} d X_t = \D \bigl( |X_t|^m
\operatorname{sgn}(X_t) \bigr)\,dt + \sum_{k=1}^N
\mu_k e_k X_t d\b^{(k)}_t,\qquad
1 < m < \infty,
\end{equation}
has been given in \cite{BR11b} by proving the unique existence of
pathwise solutions to a corresponding random PDE for essentially
bounded initial conditions $x \in L^\infty(\mcO)$. The existence and
uniqueness of probabilistically strong solutions to \eqref{eqnspme},
even including $0<m<1$ and all initial conditions $x \in
(H_0^1(\mcO) )^*$, has been obtained in~\cite{RRW07}. However,
this does not yield the existence of an RDS. The pathwise solutions to
the transformed equation constructed in \cite{BR11b} form an RDS $\vp$
on~$L^\infty(\mcO)$. However, neither continuity of $x \mapsto\vp
(t,\omega
)x$ nor continuity of $t \mapsto\vp(t,\omega)x$ has been obtained. These
properties of RDS are crucial to obtain the existence of random
attractors. Due to the strong norm on the state space $L^\infty(\mcO)$
especially the continuity in the initial condition is not clear.
In this paper we prove the generation of an RDS corresponding to SPME
driven by multiplicative space--time rough signals for all initial
conditions $X_0 \in L^1(\mcO)$, that is to equations of the form
%
%e1.2 #&#
\begin{eqnarray}
\label{eqnroughPDE} d X_t &=& \D \bigl(|X_t|^m
\operatorname{sgn}(X_t) \bigr)\,dt + \sum_{k=1}^N
f_k X_t \circ dz^{(k)}_t\qquad \mbox{on
} \mcO_T,
\nonumber
\\[-8pt]
\\[-8pt]
\nonumber
X(0) &=& X_0\qquad \mbox{on } \mcO,
\end{eqnarray}
with $1<m<\infty$, homogeneous Dirichlet boundary conditions, rough
driving signals $z^{(k)} \in C([0,T];\R)$ and with $f_k \in C^\infty
(\bar\mcO)$. We assume the number of signals $N$ to be finite and high
regularity for $f_k$ for simplicity only. In fact, most of the proofs
only require $\sum_{k=1}^\infty f_k(\xi)z_t^{k} \in C([0,T];C^2(\bar
\mcO
))$. The stochastic Stratonovich integral occurring in \eqref
{eqnroughPDE} is informal, and the rigorous justification of this
notation is part of our results. The resulting stochastic flow is
proven to be an RDS $\vp$ on $L^1(\mcO)$ which is continuous in the
initial condition and in time. Generalizing the notion of
quasi-continuity of RDS we show that $\vp$ is quasi-weakly-continuous
on $L^p(\mcO)$ for all $p \in[1,\infty)$ and
quasi-weakly$^*$-continuous on $L^\infty(\mcO)$. Moreover, we prove the
existence of an absorbing random set $F \subseteq X$ which even is
bounded in $L^\infty(\mcO)$, as well as asymptotic compactness of
$\vp$
on each $L^p(\mcO)$, $p \in[1,\infty]$ (requiring a uniform convexity
condition for $\mcO$ if $p = \infty$). Generalizing an existence result
for random attractors of quasi-continuous RDS, we deduce the existence
of a random attractor $A$ for $\vp$ [as an RDS on $L^1(\mcO)$], which
is compact and attracting in each $L^p(\mcO)$ with $p \in[1,\infty]$.
For \textit{semilinear} SPDE with linear multiplicative space--time noise,
a construction of stochastic flows and invariant manifolds can be found
in \cite{MZZ08}.

We obtain new spatial and temporal regularity properties for solutions
to \eqref{eqnroughPDE} analogous to those proved for deterministic
porous media type equations by De Giorgi--Nash--Moser type iteration
techniques in \cite{DB83}. More precisely, we prove that the solution
$X$ is locally equicontinuous on $\mcO_T$ (i.e., continuous on each
compact set $K \subseteq(0,T] \times\mcO$ with a modulus of
continuity independent of the initial condition). Under appropriate
assumptions on the boundary $\partial\mcO$ (on the initial data $X_0$,
resp.), also equicontinuity up to the boundary (continuity up to
initial time $t = 0$, resp.) is obtained. Applied to driving signals
given by independent Brownian motions this implies a new regularity
result for the variational stochastic solution $X$ corresponding to
\eqref{eqnroughPDE}, namely $\P$-a.s. local equicontinuity on~$\mcO
_T$. This complements the regularity results given in \cite{G12}, where
it is shown that $|X|^m\operatorname{sgn}(X_t) \in L^2([0,T] \times\O;H_0^1(\mcO))$
and $X \in L^\infty([0,T];L^{m+1}(\O\times\mcO))$ if the initial
condition is regular enough.

We consider \eqref{eqnroughPDE} driven by rough signals $z^{(k)} \in
C([0,T];\R)$ which do not necessarily need to be given as paths of a
continuous semimartingale. The construction proceeds by a Wong--Zakai
approximation of the driving noise, proving the existence of a limit
solution independent of the chosen approximating sequence. This is
reminiscent of the rough paths approach to SPDE developed in \cite
{LS00,LS02,BM01,BM01-2,GT10,H11,CFO11}, where SPDE driven by rough
paths are treated by transformation to a random PDE; cf. \eqref
{eqnroughPDEtransformed} below. We note, however, that due to the
form of perturbation considered in this paper, no rough paths
techniques such as rough paths topologies on augmented paths spaces are
required. If the driving signal is given by a continuous
semimartingale, we prove that this limit solution solves the
corresponding SPDE, and we thereby obtain a pathwise Wong--Zakai result
for SPME driven by linear multiplicative space--time semimartingale noise.

The long-time behavior of SPDE can be analyzed in terms of the
associated Markovian semigroup and its ergodicity or in terms of the
associated RDS and its random attractor. As soon as the driving noise
lacks the Markov property, the SPDE does not induce a Markovian
semigroup anymore. In contrast, analyzing the associated RDS merely
requires the noise to have stationary increments and some path
regularity; cf., for example, \cite{GLR11}. In particular, RDS can be
used to study long-time behavior of SPDE driven by fractional Brownian
Motion (fBm). The characteristic long-range dependence of fBm makes an
investigation of the induced stochastic dynamics especially intriguing.
In this paper we only assume that the noise has stationary increments
and continuous paths, thus including fBm for all Hurst parameters.

Concerning the theory of RDS and random attractors, we slightly
generalize the concept of quasi-weakly continuous RDS in order to show
that the constructed RDS is quasi-weakly$^*$-continuous on $L^\infty
(\mcO
)$ and has a random attractor with respect to the $L^\infty$-norm.
Since the equicontinuity of the solutions only holds locally, that is,
on every compact set $K \subseteq\mcO$, the notion of compact
absorption has to be replaced by asymptotic compactness (cf., e.g., \cite{BL06} and the references therein), a~concept usually needed in
order to treat unbounded domains. This application of asymptotic
compactness seems to be new.

Our methods to prove the existence of solutions to \eqref
{eqnroughPDE} for initial conditions $X_0 \in L^{m+1}(\mcO)$ also
apply in the case of fast diffusions (i.e., for $0<m<1$) driven by
continuous signals. In particular, this generalizes results given in
\cite{BR11b} since no restrictions on the dimension $d$ nor on the
exponent $0<m<1$ are assumed. In order not to overload the
presentation, the case of fast diffusion equations is treated as a
remark only (Remark \ref{rmkfde}, below).

SPME and stochastic fast diffusion equations (SFDE) have been
intensively investigated in recent years; cf., for example, \cite
{DPR04,K06,DPRRW06,RRW07,RW08,BDPR08,BDPR08-2,BDPR09,G12} and
references therein. The long-time behavior of SPME with Brownian
additive noise in terms of the existence of a random attractor has
first been treated in \cite{BGLR10} which then has been partially
extended to more generally distributed additive noise in \cite
{GLR11,G13}. The SFDE ($0 < m < 1$) with linear multiplicative
space--time noise has been first solved in \cite{RRW07,G13}.
Subsequently, extinction in finite time with positive probability has
been shown in \cite{BDPR09-3} and in a more singular case which is used
as model to study self-organized criticality in \cite{BDPR09-2}.

A concise announcement of the results presented here has appeared in~\cite{G11c-2}.

%s1.1 #&#
\subsection{Survey of the construction of the RDS and of the proofs of
its properties}%\label{secintrosurvey}
Let $1 < m < \infty$ and $\Phi\in C(\R)$ be given by
\[
\Phi(r):= |r|^m \operatorname{sgn}(r).
\]
\textit{First part}: In the first part we construct ``pathwise''
solutions to the rough partial differential equation \eqref
{eqnroughPDE}. Step by step we will allow rougher signals $z^{(k)}$
and initial conditions $X_0$ at the expense of weaker notions of
solutions. The construction of solutions to \eqref{eqnroughPDE} for
signals of bounded variation proceeds by first transforming the
equation into a PDE and then constructing solutions to this transformed
equation. Let
%
%e1.3 #&#
\begin{equation}
\label{eqnmudef} \mu_t(\xi):= -\sum_{k=1}^N
f_k(\xi) z^{(k)}_t.
\end{equation}
Defining $Y = e^{\mu}X$, we obtain the transformed equation (which was
first studied in~\cite{BDPR09-2,BR11b})
%
%e1.4 #&#
\begin{eqnarray}
\label{eqnroughPDEtransformed} \partial_t Y_t &=&
e^{\mu_t} \D\Phi\bigl(e^{-\mu_t} Y_t\bigr)\qquad \mbox{on }
\mcO _T,
\nonumber
\\[-8pt]
\\[-8pt]
\nonumber
Y(0) &=& Y_0\qquad \mbox{on } \mcO,
\end{eqnarray}
with homogeneous Dirichlet boundary conditions. This transformation is
rigorous for driving signals of bounded variation (Theorem \ref
{thmtransformationbddvarn}) as well as for signals given by paths of
a continuous semimartingale (Theorem \ref
{thmtransformationveryweaksemimartingale}). Next, we prove
uniqueness of essentially bounded solutions to \eqref
{eqnroughPDEtransformed} (Theorem \ref
{thmuniquenessveryweaksoln}). Equation \eqref
{eqnroughPDEtransformed} does not fall into any class of equations
for which unique existence of solutions is known. In particular, \eqref
{eqnroughPDEtransformed} does not satisfy the structural assumptions
required in \cite{A91,DK87,G89,S83b}.

For continuous driving signals, we construct weak solutions to \eqref
{eqnroughPDEtransformed} as limits of solutions to a nondegenerate,
smooth approximation; that is, we approximate $\Phi$ by $\Phi
^{(\delta)}$
with $0 < C(\delta) \le\dot\Phi^{(\delta)}$ and the signal $z$ by
$z^{(\delta)}
\in C^\infty([0,T];\R^N)$. Solutions to these nondegenerate
approximations are obtained via classical existence results for
quasilinear equations. Passing to the limit $\delta\to0$ in order to
obtain weak solutions to \eqref{eqnroughPDEtransformed} requires
uniform $L^\infty(\mcO)$ bounds for the approximating solutions~$Y^{(\delta
)}$. Such bounds are obtained by constructing bounded supersolutions to
\eqref{eqnroughPDEtransformed}. Thereby, the existence of \textit
{weak solutions} to \eqref{eqnroughPDEtransformed} satisfying
analogous $L^\infty(\mcO)$ bounds is obtained for essentially bounded
initial conditions (Theorem~\ref{thmexistenceweak}). In case of
signals of bounded variation this yields weak solutions to \eqref
{eqnroughPDE} by transformation.

Next, we approximate general continuous driving signals $z$ by
continuous signals of bounded variation $z^{(\ve)}$ and prove that the
corresponding weak solutions $X^{(\ve)}$ converge to a limit $X$
independent of the chosen approximating sequence $z^{(\ve)}$. We call
the limit $X$ a \textit{rough weak solution} to \eqref{eqnroughPDE}
and observe $X=e^{-\mu}Y$, where $Y$ denotes the weak solution to
\eqref
{eqnroughPDEtransformed} for the continuous driving signal $z$
(Theorem \ref{thmroughPDE}).

In order to construct solutions for general initial data $X_0 \in
L^1(\mcO)$ we prove Lipschitz continuity of $X$ in the initial
condition with respect to the $L^1(\mcO)$ norm. For $X_0 \in L^1(\mcO)$
solutions are then obtained as \textit{limit solutions} by
approximation of $X_0$ by essentially bounded initial conditions
(Theorem \ref{thmlimitsoln}). Using an $L^1-L^\infty$ regularizing
property of the flow, these limit solutions are characterized as unique
\textit{generalized weak solutions} to \eqref
{eqnroughPDEtransformed} (Theorem \ref{thmcharaclimitsoln}). This
regularization property also builds the foundation of the proof of
bounded absorption. The key idea is to combine an interval splitting
technique as used in \cite{BR11b}, Lemma 3.3, with the known uniform
supersolution (i.e., a supersolution independent of the initial
condition) from the deterministic case
\[
U(t):= A t^{-{1}/{(m-1)}}\bigl(R^2-|\xi|^2
\bigr)^{{1}/{m}}.
\]
Combining these ideas we construct a new uniform supersolution for
\eqref{eqnroughPDEtransformed}. The resulting construction is quite
different from the one given in \cite{BR11b}.

Based on continuity results presented in \cite{DB83} we then prove that
the limit solutions are uniformly continuous on each compact set $K
\subseteq(0,T] \times\mcO$ (Theorem~\ref{thmctnlimitsoln}). This
finishes the treatment of the pathwise case.

\textit{Second part}: In the second part we consider SPME driven by
signals given as paths of stochastic processes
\begin{eqnarray*}
\label{eqnSPDE} d X_t &=& \D\Phi(X_t)\,dt + \sum
_{k=1}^N f_k X_t \circ
dz^{(k)}_t\qquad \mbox {on } \mcO_T,
\\
X(0) &=& X_0\qquad\mbox{on } \mcO,
\end{eqnarray*}
with homogeneous Dirichlet boundary conditions, where $z$ is an $\R
^N$-valued stochastic process with stationary increments and continuous
paths. Defining
\[
\vp(t,\omega)x = X(t,0;\omega)x
\]
yields an order-preserving RDS on $L^1(\mcO)$ (Theorem \ref
{thmgenerationRDS}), where $X(t,0;\omega)x$ is the solution obtained in
the first part driven by the signal $z =z(\omega)$. The uniform
$L^\infty
(\mcO)$ bound and the regularity results obtained for the rough PDE
\eqref{eqnroughPDE} continue to hold for $\vp$, which induces
asymptotic compactness of $\vp$ in each $L^p(\mcO)$, $p \in[1,\infty
]$. The existence of a random attractor in $L^1(\mcO)$ follows. In
order to deduce the attraction property in higher $L^p$-norms and in
particular with respect to the $L^\infty$-norm, we slightly generalize
the notion of quasi-weakly-continuous RDS for not necessarily reflexive
subspaces and thereby obtain the existence of a random attractor with
respect to every $L^p$-norm, $p \in[1,\infty]$ (Theorem \ref
{thmexistenceRA}).

In Section~\ref{secmainresult} we introduce the detailed setup and
present the main results. Proofs of the pathwise results are given in
Section~\ref{secroughcase} while the ones for the stochastic case and
the RDS $\vp$ are given in Section~\ref{secRDS}.

As usual in probability theory we denote the time-dependency of
functions by a subscript $X_t$ rather than by $X(t)$ in order to keep
the equations at a bearable length. We would like to apologize to the
readers with a more analytical background for this maybe unfamiliar notation.

%%%%------------------------------------------------------------------------------------
%------------------------ Assumptions and main result
%-------------------------------
%%%%------------------------------------------------------------------------------------
%s2 #&#
\section{Setup and main results}\label{secmainresult}
Let $\mcO\subseteq\R^d$ be a smooth, bounded domain, $T > 0$ and
$\mcO
_T:= [0,T] \times\mcO$. By $\mcP\mcO_{[s,t]}$ we denote the
(time-inverted) parabolic boundary $[s,t] \times\partial\mcO \cup \{
t\} \times\mcO$, and we set $\mcP\mcO_T:= \mcP\mcO_{[0,T]}$. Let
$C(\mcO)$ be the set of continuous functions on $\mcO$, $C^{m,n}(\bar
\mcO_T) \subseteq C(\bar\mcO_T)$ be the set of all continuous functions
on $\mcO_T$ having $m$ continuous derivatives in time and $n$
continuous derivatives in space. By $C^{1-\operatorname{var}}([0,T];H)$ we denote the
set of all continuous functions of bounded variation and by
$C^w([0,T];H)$ the weakly continuous functions taking values in $H$. As
usual, $W^{m,p}(\mcO)$ denotes the Sobolev space of order $m$ in
$L^p(\mcO)$, $W_0^{m,p}(\mcO)$ the subspace of functions vanishing on
$\partial\mcO$, and we set $H_0^1(\mcO):= W_0^{1,2}(\mcO)$,
$H:=(H_0^1(\mcO))^*$. For a subset $K$ of a Banach space $X$ we define
$\|K\|_X:= \sup_{k \in K} \|k\|_X$.

%s2.1 #&#
\subsection{Porous medium equation driven by rough signals}\label{ssecRPME}

Let us first define what we mean by a solution to~\eqref{eqnroughPDE}
and~\eqref{eqnroughPDEtransformed}. Setting $B(x)(z):= \sum_{k=1}^N
f_k x z^{(k)}$ for $x \in L^1(\mcO)$ and $z \in\R^N$ we can rewrite
\[
B(X_t) \circ dz_t = \sum_{k=1}^N
f_k X_t \circ dz^{(k)}_t.
\]
As outlined in the \hyperref[sec1]{Introduction}, we will introduce several notions of
solutions to~\eqref{eqnroughPDE} and \eqref
{eqnroughPDEtransformed}, corresponding to the intermediate steps in
the construction of the solution for initial values in $ L^1(\mcO)$ and
continuous driving signals. The final result will be the unique
existence of a function $X \in L^1(\mcO_T)$ such that the
transformation $Y = e^{\mu}X$ is a generalized weak solution of \eqref
{eqnroughPDEtransformed} (cf. Definition \ref{defgeneralizedweak}
below) as well as its continuity properties (cf. Theorem \ref
{thmctnlimitsoln} below). Defining $X$ to be a solution to \eqref
{eqnroughPDE} is further justified by the construction since $X$ is
obtained as the unique limit of solutions to approximating equations,
independent of the chosen approximating sequence. In order to underline
this fact, to explain the structure of the construction and to point
out the higher regularity of solutions for more regular initial data
and driving signals, we explicitly formulate the intermediate existence
and uniqueness results. We will use the usual notation for (very) weak
solutions as in \cite{DB83}.

%de2.1 #&#
\begin{definition}[(Weak and very weak solutions)]\label{defweaksoln}
(i) Let $Y_0 \in L^1(\mcO)$. A~function $Y \in L^1(\mcO_T)$ with
$\Phi(e^{-\mu}Y) \in L^1(\mcO_T)$ is called a very weak solution to
\eqref{eqnroughPDEtransformed} if
%
%e2.1 #&#
\begin{equation}
\label{eqnveryweakroughtransformed} -\int_{\mcO_T}
Y_r \partial_r \eta \,d\xi \,dr - \int_\mcO
Y_0 \eta _0 \,d\xi= \int_{\mcO_T} \Phi
\bigl(e^{-\mu_r} Y_r\bigr) \D\bigl(e^{\mu_r}
\eta_r \bigr) \,d\xi \,dr
\end{equation}
for all $\eta\in C^{1,2}(\bar\mcO_T)$ with $\eta= 0$ on $\mcP\mcO_T$.
If in addition $\Phi(e^{-\mu}Y) \in L^1([0,T];\break  W_0^{1,1}(\mcO))$, then
$Y$ is said to be a weak solution to \eqref{eqnroughPDEtransformed}.

(ii) Let $z \in C^{1-\operatorname{var}}([0,T];\R^N)$ and $X_0 \in L^1(\mcO)$.
A function $X \in L^1(\mcO_T)$ such that $t \mapsto ( \int_{\mcO}
B(X_t) \eta_t \,d\xi )$ is continuous and $\Phi(X) \in L^1(\mcO
_T)$ is called a very weak solution to \eqref{eqnroughPDE} if
\begin{eqnarray*}
&&-\int_{\mcO_T} X_r \partial_r \eta \,d
\xi \,dr - \int_\mcO X_0 \eta _0 \,d
\xi
\nonumber
\\[-8pt]
\\[-8pt]
\nonumber
&&\qquad= \int_{\mcO_T} \Phi(X_r) \D\eta_r
\,d\xi \,dr+ \int_0^T \biggl( \int_{\mcO}
B(X_r) \eta_r \,d\xi \biggr) \,dz_r
\end{eqnarray*}
for all $\eta\in C^{1,2}(\bar\mcO_T)$ with $\eta= 0$ on $\mcP\mcO_T$.
If in addition $\Phi(X) \in L^1([0,T];\break  W_0^{1,1}(\mcO))$, then $X$ is
said to be a weak solution to \eqref{eqnroughPDE}.
\end{definition}
Note that in the case of (very) weak solutions to \eqref{eqnroughPDE}
we implicitly assume $z \in C^{1-\operatorname{var}}([0,T];\R^N)$.

A function $Y \in L^1(\mcO_T) \cap C([0,T];H)$ with $\Phi(e^{-\mu} Y)
\in L^1([0,T];H_0^1(\mcO))$ is a weak solution to \eqref
{eqnroughPDEtransformed} if and only if
\[
\frac{dY_t}{dt} = e^{\mu_t}\D\Phi\bigl(e^{-{\mu_t}}Y_t
\bigr)
\]
for a.e. $t \in[0,T]$ as an equation in $H$. Similarly, $X \in
L^1(\mcO_T) \cap C([0,T];H)$ with $\Phi(X) \in L^1([0,T];H_0^1(\mcO))$
is a weak solution to \eqref{eqnroughPDE} if and only if
\[
X_t = X_0 + \int_0^t
\D\Phi(X_r) \,dr + \int_0^t
B(X_r)\,dz_r
\]
for all $t \in[0,T]$ as an equation in $H$. If we replace $H$ by some
weaker space $H^{-k} \supseteq L^1(\mcO)$, then similar equivalences
hold for very weak solutions in $C^w([0,T];L^1(\mcO))$.

For very weak solutions we will prove that equations \eqref
{eqnroughPDE} and \eqref{eqnroughPDEtransformed} are indeed in
one-to-one correspondence under the transformation $Y = e^\mu X$.
%
%th2.2 #&#
\begin{theorem}\label{thmtransformationbddvarn}
Let $X_0 \in L^1(\mcO)$, $z \in C^{1-\operatorname{var}}([0,T];\R^N)$ and $X \in
L^1(\mcO_T)$ such that $t \mapsto ( \int_{\mcO} B(X_t) \eta
_t \,d\xi )$ is continuous for all $\eta\in C^{0,2}(\bar\mcO_T)$ with
$\eta= 0$ on $\mcP\mcO_T$. Then $X$ is a very weak solution to
\eqref
{eqnroughPDE} if and only if $Y:= e^{\mu}X$ is a very weak solution
to \eqref{eqnroughPDEtransformed}.
\end{theorem}
As an immediate consequence we obtain that $X$ is a weak solution to
\eqref{eqnroughPDE} if and only if $Y:= e^{\mu}X$ is a weak solution
to \eqref{eqnroughPDEtransformed}. We will prove the following
uniqueness of very weak solutions:
%
%th2.3 #&#
\begin{theorem}\label{thmuniquenessveryweaksoln}
Essentially bounded very weak solutions to \eqref{eqnroughPDE} and
\eqref{eqnroughPDEtransformed} are unique.
\end{theorem}
By Theorem \ref{thmtransformationbddvarn}, uniqueness of \eqref
{eqnroughPDE} follows from uniqueness of \eqref
{eqnroughPDEtransformed}. The proof relies on the duality method. We
give a short, informal idea of the proof. For two solutions
$Y^{(1)},Y^{(2)}$ with the same initial condition let $Y =
Y^{(1)}-Y^{(2)}$. Then~$Y$ satisfies
\begin{eqnarray*}
\int_{\mcO_T} Y_r \partial_r\eta \,d\xi
\,dr &=& -\int_{\mcO_T} \bigl( \Phi\bigl(e^{-\mu_r}Y^{(1)}_r
\bigr)-\Phi\bigl(e^{-\mu
_r}Y^{(2)}_r\bigr) \bigr) \D
\bigl(e^{\mu_r} \eta_r \bigr) \,d\xi \,dr
\\
&=& -\int_{\mcO_T} a_r Y_r \D
\bigl(e^{\mu_r} \eta_r \bigr) \,d\xi \,dr
\end{eqnarray*}
for all admissible testfunctions $\eta$, where
\[
a_t:= \cases{ \displaystyle\frac{\Phi(e^{-\mu_t}Y^{(1)}_t)-\Phi(e^{-\mu
_t}Y^{(2)}_t)}{Y^{(1)}_t-Y^{(2)}_t}, & \quad$\mbox{for }
Y^{(1)}_t \ne Y^{(2)}_t,$ \vspace*{2pt}
\cr
0, &\quad $\mbox{otherwise}.$}
\]
We may rewrite this to get
\[
\int_{\mcO_T} Y_r \bigl( \partial_r
\eta_r + a_r \D\bigl(e^{\mu_r}
\eta_r\bigr) \bigr) \,d\xi \,dr = 0
\]
for all admissible $\eta$. The proof is then concluded by choosing
$\eta
$ as a solution to
%
%e2.2 #&#
\begin{eqnarray}
\label{eqnvpdefnintro} \partial_r \eta_r +
a_r \D\bigl(e^{\mu_r} \eta_r\bigr) - \t&=& 0\qquad
\mbox{on } \mcO_T,
\nonumber
\\[-8pt]
\\[-8pt]
\nonumber
\eta&=& 0\qquad \mbox{on } \mcP\mcO_T
\end{eqnarray}
for an arbitrary, smooth test-function $\t$. Since solutions to \eqref
{eqnvpdefnintro} are not known to satisfy sufficient regularity be
used as testfunctions, further approximation arguments are required.
These rely on nondegenerate, smooth approximation of $a_r,\mu_r$ and
on an interval splitting method required in order to control the
perturbing factors $e^{\mu_r}$.

As outlined in the \hyperref[sec1]{Introduction} by a nondegenerate approximation of
\eqref{eqnroughPDEtransformed}, we obtain:
%
%th2.4 #&#
\begin{theorem}\label{thmexistenceweak} Let $1<m<\infty$. Then:
\begin{longlist}[(ii)]
\item[(i)] Let $Y_0 \in L^\infty(\mcO)$ and $z \in C([0,T];\R^N)$. Then
there exists a unique weak solution $Y \in C([0,T];H) \cap L^\infty
(\mcO
_T)$ to \eqref{eqnroughPDEtransformed} satisfying $\Phi(e^{-\mu} Y)
\in L^2([0,T];H_0^1(\mcO))$. There is a function $U\dvtx[0,T]\times
\mcO
\to\bar\R$ (taking the value $\infty$ at $t=0$) that is piecewise
smooth on $(0,T]$ such that for all $Y_0 \in L^\infty(\mcO)$
\[
Y_t \le U_t\qquad \mbox{a.e. in } \mcO\mbox{ and } \forall t \in[0,T].
\]
($U$ is more explicitly defined in the proof below).
\item[(ii)] Let $z \in C^{1-\operatorname{var}}([0,T];\R^N)$ and $X_0 \in L^\infty
(\mcO
)$. Then there exists a unique weak solution $X \in C([0,T];H)\cap
L^\infty(\mcO_T)$ to \eqref{eqnroughPDE} satisfying $\Phi(X) \in
L^2([0,T];\break  H_0^1(\mcO))$ and $X_t \le U_t$ a.e. in $\mcO$, $\forall t
\in[0,T]$ with a function $U$ as in \textup{(i)}.
\end{longlist}
\end{theorem}
The existence of such an upper bound $U_t$ that is independent of the
initial condition is due to the nonlinearity ($1<m<\infty$) of the
porous medium operator and is well known in the deterministic case (cf.
\cite{V07} and references therein) with $U_t$ being of the form $U_t =
A t^{-{1}/{(m-1)}}(R^2-|\xi|^2)^{{1}/{m}}$.

%re2.5 #&#
\begin{remark}\label{rmkfde}
For the case of fast diffusion equations, that is, for $0 < m < 1$ we obtain:
\begin{longlist}[(ii)]
\item[(i)] For $Y_0 \in L^{m+1}(\mcO)$ and $z \in C([0,T];\R^N)$, there
exists a weak solution $Y \in C([0,T];H)$ to \eqref
{eqnroughPDEtransformed} satisfying $\Phi(e^{-\mu} Y) \in
L^2([0,T];H_0^1(\mcO))$. If $Y_0 \in L^\infty(\mcO)$, then
\[
Y_t \le K_t\qquad \mbox{a.e. in } \mcO\mbox{ and } \forall t \in[0,T],
\]
with $K=K(\|Y_0\|_{L^\infty(\mcO)})\dvtx[0,T]\times\mcO\to\R_+$
being a
piecewise smooth function on $[0,T]$. The map $t \mapsto Y_t$ is weakly
continuous in each $L^p(\mcO)$, $p \in[1,\infty)$.
\item[(ii)] Let $z \in C^{1-\operatorname{var}}([0,T];\R^N)$ and $X_0 \in
L^{m+1}(\mcO
)$. Then there exists a weak solution $X \in C([0,T];H)$ to \eqref
{eqnroughPDE} which satisfies $\Phi(X) \in L^2([0,T];\break H_0^1(\mcO))$.
If $X_0 \in L^\infty(\mcO)$ then $X_t \le K_t$ a.e. in $\mcO$,
$\forall t \in[0,T]$ with a function $K$ as in (i). The map $t \mapsto
X_t$ is weakly continuous in each $L^p(\mcO)$, $p \in[1,\infty)$.
\end{longlist}
No uniqueness is obtained for the fast diffusion case.
\end{remark}

So far we can solve \eqref{eqnroughPDEtransformed} for driving
signals being merely continuous while for \eqref{eqnroughPDE} we
require continuous signals of bounded variation. Since we aim to
include rough signals (as they occur, e.g., as sample paths of
fractional Brownian motion) we need to allow rougher signals $z \in
C([0,T];\R^N)$ for \eqref{eqnroughPDE} as well. Such solutions will
be constructed as limits of solutions to smoothed signals $z^{(\ve)}
\in C^{1-\operatorname{var}}([0,T];\R^N)$ with $z^{\ve} \to z$ in $C([0,T];\R^N)$. We
prove that the solutions $X^{(\ve)}$ to \eqref{eqnroughPDE} driven by
these smoothed signals converge to $X:= e^{-\mu}Y$, that is, to a
limit not depending on the chosen approximating sequence. In other
words, $X$ is the limit obtained by any Wong--Zakai approximation of
\eqref{eqnroughPDE}.

%de2.6 #&#
\begin{definition}\label{defroughweaksoln}
Let $z \in C([0,T];\R^N)$. We call $X \in C([0,T];H)$ a rough weak
solution to \eqref{eqnroughPDE} if $X(0) = X_0$ and for all
approximations $z^{(\ve)} \in\break   C^{1-\operatorname{var}}([0,T];\R^N)$ of the driving
signal $z$ with $z^{(\ve)} \to z$ in $C([0,T];\R^N)$ and corresponding
weak solutions $X^{(\ve)}$ to \eqref{eqnroughPDE} driven by
$z^{(\ve
)}$, we have
%
%e2.3 #&#
\begin{equation}
\label{eqnroughweaksoln} X^{(\ve)}_t \to X_t\qquad
\mbox{in } H\mbox{ and } \forall t \in[0,T].
\end{equation}
\end{definition}

%th2.7 #&#
\begin{theorem}\label{thmroughPDE}
Let $X_0 \in L^\infty(\mcO)$ and $z \in C([0,T];\R^N)$. Then there
exists a unique rough weak solution $X$ to \eqref{eqnroughPDE} with
$\Phi(X) \in L^2([0,T];H_0^1(\mcO))$ given by $X=e^{-\mu}Y$, where $Y$
is the corresponding weak solution to \eqref
{eqnroughPDEtransformed}. $X$~satisfies $X_t \le U_t$ a.e. in $\mcO
$ for all $t \in[0,T]$, with $U$ as in Theorem \ref{thmexistenceweak}.
\end{theorem}
The convergence of the approximations $X^{(\ve)} = e^{-\mu^{(\ve
)}}Y^{(\ve)}$ is proven via convergence of $Y^{(\ve)}$. The main point
of the proof of Theorem~\ref{thmroughPDE} is the realization that the
a priori bounds derived in the construction of weak solutions in
Theorem~\ref{thmexistenceweak} rely on the driving signal $z$ only
via its sup-norm and its modulus of continuity. Therefore, the bounds
are uniform for every approximating sequence $z^{(\ve)}$ as in
Definition \ref{defroughweaksoln}. The convergence of $Y^{(\ve)}$ is
then obtained by similar methods as used in the construction of weak solutions.

Since the weak solutions to \eqref{eqnroughPDE} obtained in Theorem
\ref{thmexistenceweak} are also given by $X=e^{-\mu}Y$, the notions
of rough weak solutions and weak solutions to \eqref{eqnroughPDE}
coincide for continuous driving signals of bounded variation and
essentially bounded initial conditions.

%de2.8 #&#
\begin{definition}
Let $X_0 \in L^1(\mcO)$ and $z \in C([0,T];\R^N)$. A function $X \in
C^w([0,T];L^1(\mcO))$ is said to be a limit solution to \eqref
{eqnroughPDE} if $X(0) = X_0$ and for all approximations $X^{(\delta)}_0
\in L^\infty(\mcO)$ with $X^{(\delta)}_0 \to X_0$ in $L^1(\mcO)$ and
corresponding rough weak solutions $X^{(\delta)}$ to \eqref{eqnroughPDE},
we have $X^{(\delta)}_t \to X_t$ in $L^1(\mcO)$ uniformly in time.
\end{definition}
These limit solutions play an important role for allowing initial
conditions in $L^1(\mcO)$. In Lemma \ref{lemmaL1-ctnicweaksoln}
below we will establish uniform $L^1(\mcO)$ continuity in the initial
condition for rough weak solutions. This will allow to construct limit
solutions for initial values in $L^1(\mcO)$ by approximation in the
initial condition.

%th2.9 #&#
\begin{theorem}\label{thmlimitsoln}
Let $z \in C([0,T];\R^N)$. For each $X_0 \in L^1(\mcO)$ there is a
unique limit solution $X$ satisfying $\Phi(X) \in L^1(\mcO_T)$. For
$X_0^{(i)} \in L^1(\mcO)$, $i=1,2$ the corresponding limit solutions satisfy
\begin{eqnarray*}
&&\sup_{t \in[0,T]} \bigl\|\bigl(X^{(1)}_t-X^{(2)}_t
\bigr)^+\bigr\|_{L^1(\mcO)} + \bigl\|\bigl(\Phi \bigl(X^{(1)}\bigr) - \Phi
\bigl(X^{(2)}\bigr)\bigr)^+\bigr\|_{L^1(\mcO_T)}
\\
&&\qquad\le C \bigl\| \bigl(X^{(1)}_0-X^{(2)}_0
\bigr)^+\bigr\|_{L^1(\mcO)}
\end{eqnarray*}
and
\begin{eqnarray*}
&&\sup_{t \in[0,T]}\bigl\|X^{(1)}_t-X^{(2)}_t
\bigr\|_{L^1(\mcO)} + \bigl\|\Phi \bigl(X^{(1)}\bigr) - \Phi
\bigl(X^{(2)}\bigr)\bigr\|_{L^1(\mcO_T)}
\\
&&\qquad\le C \bigl\| X^{(1)}_0-X^{(2)}_0
\bigr\|_{L^1(\mcO)}.
\end{eqnarray*}
We further have $X_t \le U_t$ a.e. in $\mcO$ for all $t \in[0,T]$,
where $U$ is as in Theorem~\ref{thmexistenceweak}.
\end{theorem}
We present an informal argument justifying the $L^1$ stability
proved in Theorem~\ref{thmlimitsoln} above at least for small times
$T$. Let $\vp\in C^2(\bar\mcO)$ be the unique classical solution to
\begin{eqnarray*}
\D\vp&=& -1\qquad \mbox{in } \mcO,
\\
\vp&=& 1\qquad \mbox{on } \partial\mcO.
\end{eqnarray*}
Using partial integration twice we obtain (informally)
\begin{eqnarray*}
&&\partial_t \int_\mcO\bigl|Y^{(1)}-Y^{(2)}\bigr|
\vp \,d\xi
\\
&&\qquad= \int_\mcO \operatorname{sgn}\bigl(Y^{(1)}-Y^{(2)}
\bigr) \D \bigl(\Phi\bigl(e^{-\mu}Y^{(1)}\bigr)- \Phi
\bigl(e^{-\mu}Y^{(2)}\bigr) \bigr) e^\mu\vp \,d\xi
\\
&&\qquad\le-\int_\mcO \operatorname{sgn}\bigl(\Phi\bigl(e^{-\mu}Y^{(1)}
\bigr)- \Phi\bigl(e^{-\mu}Y^{(2)}\bigr)\bigr)
\\
&&\hspace*{22pt}\qquad\quad{} \times\nabla \bigl(\Phi\bigl(e^{-\mu}Y^{(1)}\bigr)- \Phi
\bigl(e^{-\mu
}Y^{(2)}\bigr) \bigr) \cdot\nabla
\bigl(e^\mu\vp\bigr) \,d\xi
\\
&&\qquad= \int_\mcO\bigl|\Phi\bigl(e^{-\mu}Y^{(1)}
\bigr)- \Phi\bigl(e^{-\mu}Y^{(2)}\bigr)\bigr| \D \bigl(e^\mu
\vp\bigr) \,d\xi.
\end{eqnarray*}
For small times $T$ we note $\D(e^\mu\vp) \le-\frac{1}{2}$, which
yields the result. For large times~$T$ this method is applied by using
an interval splitting technique in order to compensate the growth of
the perturbing factor $e^\mu$.

As a special case we obtain the following comparison principle
%
%co2.10 #&#
\begin{corollary}
Let $X_0^{(1)},X_0^{(2)} \in L^1(\mcO)$ with $X_0^{(1)} \le X_0^{(2)}$
a.e. in $\mcO$. Then
\[
X^{(1)}_t \le X^{(2)}_t
\]
for all $t \in[0,T]$, a.e. in $\mcO$. In particular, if $0 \le X_0$,
then $0 \le X$.
\end{corollary}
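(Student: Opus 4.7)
The plan is to read off the corollary directly from the first inequality asserted in Theorem \ref{thm:limit_soln}, which is a one-sided $L^1$ contraction stated precisely in terms of positive parts. Concretely, I would observe that the hypothesis $X_0^{(1)} \le X_0^{(2)}$ a.e.\ in $\mcO$ is equivalent to $(X_0^{(1)} - X_0^{(2)})^+ = 0$ a.e., so the right-hand side of
\[
  \sup_{t \in [0,T]} \|(X^{(1)}_t-X^{(2)}_t)^+\|_{L^1(\mcO)} + \|(\Phi(X^{(1)}) - \Phi(X^{(2)}))^+\|_{L^1(\mcO_T)} \le C \|(X^{(1)}_0-X^{(2)}_0)^+\|_{L^1(\mcO)}
\]
vanishes. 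Hence $(X^{(1)}_t - X^{(2)}_t)^+ = 0$ in $L^1(\mcO)$ for every $t \in [0,T]$, and since this is an equality of $L^1$ functions we conclude $X^{(1)}_t \le X^{(2)}_t$ a.e.\ in $\mcO$, for every $t$, which is the first claim.

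For the special case $0 \le X_0$, the natural strategy is to compare with the identically zero solution. I would invoke the uniqueness part of Theorem \ref{thm:limit_soln}: the function $X \equiv 0$ trivially satisfies the defining property of a limit solution with initial datum $0$ (any essentially bounded approximation $X_0^{(\d)} \to 0$ in $L^1(\mcO)$ yields weak solutions $X^{(\d)} \equiv 0$ when $X_0^{(\d)} \equiv 0$, and in general one uses the $L^1$-contraction in the initial datum, already provided by Theorem \ref{thm:limit_soln}, to see that $X^{(\d)} \to 0$). Thus the unique limit solution with zero initial datum is the zero function, and applying the first part of the corollary with $X^{(1)}_0 = 0 \le X_0 = X^{(2)}_0$ yields $0 = X^{(1)} \le X^{(2)} = X$.

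No genuine obstacle is anticipated: the whole content lies in the one-sided $L^1$ estimate already established in Theorem \ref{thm:limit_soln}. The only mild subtlety is the verification that $X \equiv 0$ is indeed the limit solution associated to $X_0 \equiv 0$, which is immediate from the uniqueness and the contraction estimate, and a careful writer would state this explicitly rather than leaving it implicit.
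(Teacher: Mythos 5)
Your proposal is correct and follows exactly the route the paper intends: the corollary is stated as an immediate special case of the one-sided $L^1$ estimate in Theorem \ref{thm:limit_soln}, with the right-hand side vanishing when $X_0^{(1)} \le X_0^{(2)}$. Your verification that $X \equiv 0$ is the limit solution for zero initial data (via uniqueness, consistent with $\vp(t,\o)0=0$ in Theorem \ref{thm:generation_RDS}) is the same implicit step the paper relies on, so there is nothing to add.
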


Let $X$ be a limit solution. By Theorem \ref{thmlimitsoln} there are
rough weak solutions with $X^{(\delta)} \to X$ in $L^\infty
([0,T];L^1(\mcO
))$ and $\Phi(X^{(\delta)}) \to\Phi(X)$ in $L^1(\mcO_T)$. Hence,
there are
weak solutions $Y^{(\delta)} = e^{\mu} X^{(\delta)}$ converging in
$L^\infty
([0,T];L^1(\mcO))$ to $Y:=e^{\mu}X$ and $\Phi(e^{-\mu}Y^{(\delta
)}) \to\Phi
(e^{-\mu}Y)$ in $L^1(\mcO_T)$. Passing to the limit $\delta\to0$ in
\eqref
{eqnveryweakroughtransformed} yields
%
%re2.11 #&#
\begin{remark}\label{rmklimitareveryweak}
Let $X_0 \in L^1(\mcO)$ and $X$ be the corresponding limit solution.
Then $Y:=e^{\mu}X$ is a very weak solution of \eqref
{eqnroughPDEtransformed}.
\end{remark}

The limit solution $X$ turns out to be in fact more regular. The proof
proceeds by choosing the approximations used in the\vadjust{\goodbreak} construction of
weak solutions in a way that allows to apply the regularity results
presented in \cite{DB83}. We say that a quantity depends only on the
data if it is a function of $d$, $m$, $T$.

%th2.12 #&#
\begin{theorem}\label{thmctnlimitsoln}
Let $z \in C([0,T];\R^N)$, $X_0 \in L^1(\mcO)$ and $X$ be the
corresponding limit solution. Then:
\begin{enumerate}[(iii)]
\item[(i)] $X$ is uniformly continuous on every compact set $K
\subseteq(0,T] \times\mcO$, with modulus of continuity depending only
on the data and $\operatorname{dist}(K,\partial\mcO_T)$.
\item[(ii)] If $X_0 \in L^\infty(\mcO)$ is continuous on a compact set
$K \subseteq\mcO$, then $X$ is uniformly continuous on $[0,T] \times
K'$ for every compact set $K' \subseteq\mathring{K}$, with modulus of
continuity depending only on the data, $\|X_0\|_{L^\infty(\mcO)}$,
$\operatorname{dist}(K,\partial\mcO)$, $\operatorname{dist}(K',\partial K)$ and the modulus of
continuity of $X_0$ on $K$.
\item[(iii)] Assume:
\begin{enumerate}[($\mcO1$)]
\item[($\mcO1$)] There exist $\t^* > 0, R_0 >0$ such that $\forall x_0
\in\partial\mcO$ and every $R \le R_0$
\[
\bigl|\mcO\cap B_R(x_0)\bigr| < \bigl(1-\t^*\bigr)
\bigl|B_R(x_0)\bigr|.
\]
\end{enumerate}
Then for every $\tau> 0$, $X$ is uniformly continuous on $[\tau,T]
\times\bar\mcO$ with modulus of continuity depending only on the data,
$\t^*$ and $\tau$.
\end{enumerate}
\end{theorem}

By dominated convergence we obtain:
%
%co2.13 #&#
\begin{corollary}\label{corL^1-timectnlimitsoln}
Let $z \in C([0,T];\R^N)$.
\begin{longlist}[(ii)]
\item[(i)] If $X_0 \in L^1(\mcO)$, then $X \in C([0,T];L^1(\mcO))
\cap
C((0,T];L^p(\mcO))$ for every $p \in[1,\infty)$.
\item[(ii)] If $X_0 \in L^\infty(\mcO)$, then $X \in
C([0,T];L^p(\mcO
))$ for every $p \in[1,\infty)$.
\end{longlist}
\end{corollary}

The continuity obtained in Theorem \ref{thmctnlimitsoln} together
with the $L^\infty$-bounds from Theorem \ref{thmexistenceweak} imply
that the convergence of the various approximating solutions used to
construct limit solutions driven by rough signals in fact holds locally
uniformly. For example we obtain

%co2.14 #&#
\begin{corollary}
Let $z \in C([0,T];\R^N)$, $X_0 \in L^1(\mcO)$. Then the convergence
in \eqref{eqnroughweaksoln} holds uniformly on compact sets $K
\subseteq(0,T] \times\mcO$.
\end{corollary}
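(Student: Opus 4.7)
The idea is to bootstrap the already-established modes of convergence (the $H$-pointwise-in-time convergence of Definition \ref{def:rough_weak_soln} when $X_0\in L^\infty$, and the $L^\infty([0,T];L^1(\mcO))$-convergence supplied by the estimates in Theorem \ref{thm:limit_soln} when $X_0\in L^1$) into locally uniform convergence, using the regularity result of Theorem \ref{thm:ctn_limit_soln} as an equicontinuity reservoir and then invoking Arzel\`a--Ascoli.

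\textbf{Step 1 (uniform equicontinuity on $K$).} Let $X^{(\ve)}$ be the approximating solutions driven by signals $z^{(\ve)}\in C^{1-var}([0,T];\R^N)$ with $z^{(\ve)}\to z$ in $C([0,T];\R^N)$. Apply Theorem \ref{thm:ctn_limit_soln}(i) to each $X^{(\ve)}$. The key observation is that the modulus of continuity provided there depends only on the data and $\mathrm{dist}(K,\partial\mcO_T)$, in particular it is \emph{independent of $\ve$}. This uniformity must be read off the proof: the continuity is deduced from De Giorgi--Nash--Moser type iterations applied to the transformed equation \eqref{eqn:rough_PDE_transformed}, whose coefficients involve $z^{(\ve)}$ only through $\mu^{(\ve)} = -\sum_k f_k z^{(\ve)}_\cdot$, and the family $\{\mu^{(\ve)}\}$ is precompact (in particular uniformly bounded) in $C([0,T]\times\bar\mcO)$; the iteration constants depend on $\mu^{(\ve)}$ only through such uniform bounds.

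\textbf{Step 2 (uniform boundedness and Arzel\`a--Ascoli).} The universal upper bound $X^{(\ve)}_t\le U_t$ from Theorem \ref{thm:limit_soln} is independent of $\ve$, and a matching lower bound $X^{(\ve)}_t\ge -U_t$ is obtained by the same reasoning applied to $-X^{(\ve)}$ (using oddness of $\Phi$ so that $-X^{(\ve)}$ solves an equation of the same form). On the compact set $K\subseteq (0,T]\times\mcO$, $U$ is bounded, so $\{X^{(\ve)}|_K\}\subseteq C(K)$ is uniformly bounded and equicontinuous, and hence precompact in $C(K)$ by Arzel\`a--Ascoli.

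\textbf{Step 3 (identification of cluster points and conclusion).} Let $X^{(\ve_k)}\to\tilde X$ uniformly on $K$ along a subsequence. Uniform convergence on $K$ implies $L^1(K)$-convergence, hence $\tilde X=X$ a.e.\ on $K$: indeed, when $X_0\in L^\infty$, the pointwise-in-$t$ $H$-convergence from \eqref{eqn:rough_weak_soln} combined with Fubini (and uniform $L^\infty$-bounds to upgrade $H$-convergence on each time-slice to $L^1$-convergence) gives $X^{(\ve)}\to X$ in $L^1(K)$; for $X_0\in L^1$ the same conclusion follows by a diagonal argument from the stability estimate in Theorem \ref{thm:limit_soln} together with the $L^\infty$-case just noted. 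Since both $\tilde X$ and $X$ are continuous on $K$ (by Theorem \ref{thm:ctn_limit_soln}(i) applied to $X$ as well), $\tilde X=X$ everywhere on $K$. Because every subsequence of $\{X^{(\ve)}\}$ has a further subsequence converging in $C(K)$ to the same limit $X|_K$, the full sequence converges uniformly on $K$.

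\textbf{Main obstacle.} The non-trivial point is the $\ve$-uniformity of the continuity modulus in Step 1; once this is in hand, the argument is a routine Arzel\`a--Ascoli + subsequence wrap-up. Establishing the uniformity amounts to checking that the constants in the De Giorgi--Nash--Moser estimates used to prove Theorem \ref{thm:ctn_limit_soln} depend on the driving signal only through $\|\mu^{(\ve)}\|_{C([0,T]\times\bar\mcO)}$ (and derivatives of the $f_k$), which are uniformly controlled along the approximating sequence.
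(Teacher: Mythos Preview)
Your proposal is correct and follows essentially the same approach as the paper, which only sketches the argument in the sentence preceding the corollary: combine the uniform modulus of continuity from Theorem \ref{thm:ctn_limit_soln} with the $L^\infty$-bounds from Theorem \ref{thm:existence_weak} and conclude via Arzel\`a--Ascoli. You have correctly fleshed out the identification of the limit (Step 3), which the paper leaves implicit; for $X_0\in L^1(\mcO)$ the cleanest route is to note that the $L^1$--$L^\infty$ regularization puts you back in the $L^\infty$-initial-data case on $[\tau,T]$ for any $\tau>0$, so the $H$-identification already suffices without a separate diagonal argument.
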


In Remark \ref{rmklimitareveryweak} we have shown that the limit
solutions $X$ are solutions to \eqref{eqnroughPDE} in the sense that
their transformations $Y:=e^{\mu}X$ are very weak solutions to \eqref
{eqnroughPDEtransformed}. However, since uniqueness of very weak
solutions has only been obtained in the essentially bounded case, this
does not yield a characterization of limit solutions. To overcome this
problem we recall that the limit solutions constructed in Theorem \ref
{thmlimitsoln} enjoy an $L^1-L^\infty$ regularizing property. This
regularization can be used in order to characterize the transformation
$Y:=e^{\mu}X$ of limit solutions $X$ as generalized weak solutions, in
the following sense:

%de2.15 #&#
\begin{definition}\label{defgeneralizedweak}
Let $z \in C([0,T];\R^N)$. A map $Y \in C([0,T];L^1(\mcO))$ is said to
be a generalized weak solution to \eqref{eqnroughPDEtransformed} if
$Y$ is an essentially bounded weak solution to \eqref
{eqnroughPDEtransformed} on each interval $[\tau,T]$ with $\tau>
0$; that is, $Y \in L^\infty([\tau,T]\times\mcO)$, $\Phi(e^{-\mu
}Y) \in
L^1([\tau,T];W_0^{1,1}(\mcO))$ and
\[
-\int_{[\tau,T]\times\mcO} Y_r \partial_r \eta\, d
\xi \,dr - \int_{\mcO} Y_\tau\eta_\tau \,d
\xi= -\int_{[\tau,T]\times\mcO} \nabla\Phi \bigl(e^{-\mu
_r}
Y_r\bigr) \cdot\nabla\bigl(e^{\mu_r} \eta_r
\bigr) \,d\xi \,dr
\]
for all $\eta\in C^{1}([\tau,T]\times\bar\mcO)$ with $\eta= 0$ on
$\mcP\mcO_{[\tau,T]}$.

$X \in C([0,T];L^1(\mcO))$ is said to be a generalized weak solution
to \eqref{eqnroughPDE} if $Y=e^{\mu}X$ is a generalized weak solution
to \eqref{eqnroughPDEtransformed}.
\end{definition}

Using the continuity $X \in C([0,T];L^1(\mcO))$ of generalized weak
solutions and Lipschitz continuity of weak solutions in the initial
condition (Theorem~\ref{thmlimitsoln}) we obtain

%pr2.16 #&#
\begin{proposition}[(Uniqueness of generalized weak solutions)]
Let $X^{(i)}$ be generalized weak solutions with initial conditions
$X^{(i)}_0$, $i=1,2$. Then
\[
\sup_{t \in[0,T]}\bigl\|X^{(1)}_t-X^{(2)}_t
\bigr\|_{L^1(\mcO)} \le C \bigl\| X^{(1)}_0-X^{(2)}_0
\bigr\|_{L^1(\mcO)}.
\]
\end{proposition}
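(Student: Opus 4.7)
The plan is to reduce to the Lipschitz estimate in Theorem \ref{thm:limit_soln} by working on sub-intervals $[\tau,T]$ on which the generalized weak solutions are essentially bounded, and then to pass to the limit $\tau \downarrow 0$ using the $L^1$-continuity of generalized weak solutions at $t=0$.

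First, fix $\tau>0$ and set $Y^{(i)}_t := e^{\mu_t} X^{(i)}_t$. By Definition \ref{def:generalized_weak}, $Y^{(i)}$ is an essentially bounded weak solution to \eqref{eqn:rough_PDE_transformed} on $[\tau,T]$ with initial condition $Y^{(i)}_\tau \in L^\infty(\mcO)$. Note that the equation \eqref{eqn:rough_PDE_transformed} posed on the shifted interval $[\tau,T]$ is again of the same type, driven by the continuous signal $z|_{[\tau,T]}$, so Theorem \ref{thm:rough_PDE} and Theorem \ref{thm:existence_weak} apply on this interval. Theorem \ref{thm:rough_PDE} produces, for each $i$, a rough weak solution $\tilde X^{(i)}$ on $[\tau,T]$ with initial datum $X^{(i)}_\tau \in L^\infty(\mcO)$, whose transformation $\tilde Y^{(i)} = e^{\mu} \tilde X^{(i)}$ is the essentially bounded weak solution to \eqref{eqn:rough_PDE_transformed} given by Theorem \ref{thm:existence_weak}.

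Next, I invoke Theorem \ref{thm:uniqueness_very_weak_soln}: essentially bounded very weak solutions to \eqref{eqn:rough_PDE_transformed} are unique, so $\tilde Y^{(i)} = Y^{(i)}$ and hence $\tilde X^{(i)} = X^{(i)}$ on $[\tau,T]$. Consequently, the restrictions $X^{(i)}|_{[\tau,T]}$ are precisely the limit solutions on $[\tau,T]$ with initial data $X^{(i)}_\tau \in L^\infty(\mcO) \subseteq L^1(\mcO)$, for which Theorem \ref{thm:limit_soln} yields
\[
\sup_{t\in[\tau,T]}\|X^{(1)}_t - X^{(2)}_t\|_{L^1(\mcO)} \le C\,\|X^{(1)}_\tau - X^{(2)}_\tau\|_{L^1(\mcO)},
\]
with a constant $C$ independent of $\tau$ (it depends only on $T$ and the driving signal $z$; the estimate on $[\tau,T]$ is dominated by the one on $[0,T]$).

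Finally, I let $\tau \downarrow 0$. By Definition \ref{def:generalized_weak}, $X^{(i)} \in C([0,T];L^1(\mcO))$ (equivalently $Y^{(i)}\in C([0,T];L^1(\mcO))$ with $e^{\pm\mu}$ uniformly bounded), so $X^{(i)}_\tau \to X^{(i)}_0$ in $L^1(\mcO)$ as $\tau\downarrow 0$. Combining this with the preceding bound and the continuity of $t\mapsto X^{(1)}_t-X^{(2)}_t$ into $L^1(\mcO)$ up to $t=0$ gives
\[
\sup_{t\in[0,T]}\|X^{(1)}_t - X^{(2)}_t\|_{L^1(\mcO)} \le C\,\|X^{(1)}_0 - X^{(2)}_0\|_{L^1(\mcO)},
\]
which is the claim. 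The only real subtlety — and the step I expect to verify most carefully — is the identification of $X^{(i)}|_{[\tau,T]}$ with the limit/rough weak solution on $[\tau,T]$; this relies decisively on the uniqueness of essentially bounded very weak solutions (Theorem \ref{thm:uniqueness_very_weak_soln}) and on the fact that Theorem \ref{thm:limit_soln} transfers verbatim to the shifted interval, the driving signal being merely assumed continuous.
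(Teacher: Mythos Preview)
Your proof is correct and follows exactly the approach the paper indicates in the sentence preceding the proposition: apply the Lipschitz estimate from Theorem \ref{thm:limit_soln} (equivalently Lemma \ref{lemma:L1-ctn_ic_weak_soln}) on each $[\tau,T]$, where the generalized weak solutions are essentially bounded weak solutions and hence, by Theorem \ref{thm:uniqueness_very_weak_soln}, coincide with the rough weak/limit solutions there, and then let $\tau\downarrow 0$ using $X^{(i)}\in C([0,T];L^1(\mcO))$. Your remark that the constant $C$ can be taken independent of $\tau$ is the only point requiring care, and it is justified since the partition and bounds in the proof of Lemma \ref{lemma:L1-ctn_ic_weak_soln} depend only on the signal $z$ on $[0,T]$.
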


In Theorem \ref{thmlimitsoln} we have obtained that every limit
solution $X$ is essentially bounded on $[\tau,T]\times\mcO$ for all
$\tau> 0$. By uniqueness of limit solutions this implies that $X$ is a
rough weak solution on $[\tau,T]$. Thus $Y=e^{\mu}X$ is a generalized
weak solution.

%th2.17 #&#
\begin{theorem}\label{thmcharaclimitsoln}
Let $X_0 \in L^1(\mcO)$, and let $X$ be the corresponding limit
solution to \eqref{eqnroughPDE}. Then $X$ is the unique generalized
weak solution to \eqref{eqnroughPDEtransformed}.
\end{theorem}

%s2.2 #&#
\subsection{Stochastic porous medium equation and RDS}

So far we did not require the driving signal to be given by a
stochastic process. We aim to study the long-time behavior of solutions
to PME driven by rough noise. If the rough signal is given by a process
with (strictly) stationary increments this additional structure can be
used to significantly simplify this task. This approach is nicely
captured by the theory of RDS.

For signals given by the paths of a continuous semimartingale
stochastic calculus may be used to give meaning to the integral over
the rough signal occurring in~\eqref{eqnroughPDE}. This allows to
further justify the notion of a rough weak solution which was based on
a Wong--Zakai approximation of the noise (Definition \ref
{defroughweaksoln}).

%th2.18 #&#
\begin{theorem}\label{thmtransformationveryweaksemimartingale}
Let $z\dvtx[0,T] \times\O\to\R^N$ be a continuous semimartingale
on a
normal filtered probability space $(\O,\mcF,\mcF_t,\P)$, $X_0 \in
L^0(\O
,\mcF_0;L^1(\mcO))$ and $X(\omega)$ be the corresponding (pathwise) limit
solution to \eqref{eqnroughPDE}. Then
%
%e2.4 #&#
\begin{eqnarray}
\label{eqnroughPDEveryweak} \int_\mcO X_t \vp \,d
\xi&= &\int_\mcO X_s \vp \,d\xi + \int
_s^t \int_\mcO
\Phi(X_r) \D\vp \,d\xi \,dr
\nonumber
\\[-8pt]
\\[-8pt]
\nonumber
&&{}+ \int_s^t \biggl( \int_\mcO
B(X_r)\vp \,d\xi \biggr) \circ \,dz_r
\end{eqnarray}
for all $\vp\in C^2_0(\bar\mcO)$ and all $0 \le s \le t \le T$, $\P
$-almost surely.
\end{theorem}
As a part of Theorem \ref{thmtransformationveryweaksemimartingale}
we obtain that $t\mapsto\int_\mcO B(X_t)\vp \,d\xi$ is a continuous
semimartingale with respect to the filtration generated by $z$ for all
$\vp\in C^2_0(\bar\mcO)$. Hence, the stochastic integral in \eqref
{eqnroughPDEveryweak} is well defined.

%re2.19 #&#
\begin{remark}
By Theorem \ref{thmroughPDE} we know that for any approximation
$z^{(\ve)} \in C^{1-\operatorname{var}}([0,T];\R^N)$ with $z^{(\ve)} \to z$ in
$C([0,T];\R^N)$ (pathwise) we have $X_t^{(\ve)}(\omega) \to
X_t(\omega)$ in $H$
for all $t \in[0,T]$ and all $\omega\in\O$. Since $X$ is a solution to
\eqref{eqnroughPDEveryweak}, this yields a pathwise Wong--Zakai result.
\end{remark}

%s2.2.1 #&#
\subsubsection{Quasi-continuity of random dynamical systems}
In this section we will first recall basic notions from the theory of
RDS and then develop an existence result for random attractors based on
weakened continuity assumptions for RDS and asymptotic compactness.
This generalized result is needed since the RDS corresponding to \eqref
{eqnroughPDE}, while being continuous on $L^1(\mcO)$, is only
continuous in a weaker sense on $L^p(\mcO)$ for $p \in(1,\infty]$. For
more details on the theory of RDS and random attractors we refer to
\cite{S92,CF94,CDF97,A98}.

In the following let $((\O,\mcF,\P),(\theta_t)_{t \in\R})$ be a
metric dynamical system, that is, $(\Omega,\mathcal{F},\mathbb{P})$ is
a probability space, $(t,\omega) \mapsto\theta_t(\omega)$ is $(\mcB
(\R)
\otimes\mcF,\mcF)$ measurable, $\theta_0 =$ id, $\theta_{t+s} =
\theta
_t \circ\theta_s$ and $\theta_t$ is $\P$-preserving, for all $s,t
\in
\R$.

%de2.20 #&#
\begin{definition}
Let $(X,d)$ be a complete and separable metric space. A~random
dynamical system (RDS) over $(\theta_t)_{t \in\R}$ is a measurable map
$\vp\dvtx  \R_+ \times X \times\O\to X$, such that $\vp(0,\omega) =$
id and
\[
\vp(t+s,\omega) = \vp(t,\theta_s \omega) \circ\vp(s,\omega )\qquad
\mbox{(cocycle property)}
\]
for all $t,s \in\R_+$ and $\omega\in\O$. $\vp$ is said to be a
continuous RDS if $x \mapsto\vp(t,\omega)x$ is continuous for all $t
\in\R
_+$ and $\omega\in\O$.
\end{definition}

We now recall the stochastic generalization of notions of absorption,
attraction and $\O$-limit sets.

%de2.21 #&#
\begin{definition}%\label{defrdsbasics}
(i) A set-valued map $D\dvtx\O\to2^X$ is called measurable if
$\omega\mapsto D(\omega)$ takes values in the closed subsets of $X$ and
for all $x \in X$ the map $\omega\mapsto d(x,D(\omega))$ is
measurable, where
for nonempty sets $A,B \in2^X$ we set
\[
d(A,B):=\sup_{x \in A} \inf_{y \in B} \,d(x,y)
\]
and $d(x,B) = d(\{x\},B)$. A measurable set-valued map is also called a
(closed) random set.

(ii) A set universe $\mcD$ is a collection of families of
subsets $(D(\omega))_{\omega\in\O}$ of $X$ such that if $D \in\mcD
$ and $\hat
{D}(\omega) \subseteq D(\omega)$ for all $\omega\in\O$, then $\hat
{D} \in\mcD$.
A universe of random sets is a set universe consisting of random closed sets.

(iii) Let $A$, $B$ be random sets. $A$ is said to absorb $B$ if
there exists an absorption time $t_B(\omega)$ such that for all $t \ge
t_B(\omega)$,
\[
\vp(t,\theta_{-t}\omega)B(\theta_{-t}\omega) \subseteq A(
\omega).
\]
$A$ is said to attract $B$ if
\[
d\bigl(\vp(t,\theta_{-t}\omega)B(\theta_{-t}\omega),A(
\omega)\bigr) \mathop {\rightarrow}_{t
\to\infty} 0\qquad \forall\omega\in\O.
\]
\item[(iv)] Let $\mcD$ be a universe of random sets and $D \in\mcD$.
Then $D$ is said to be a \mbox{$\mcD$-}absorbing set for $\vp$ if $D$ absorbs
every set $\td D \in\mcD$. $\mcD$-attracting sets are defined analogously.
\end{definition}
We require absorption and attraction to hold for all $\omega\in\O$ in
order to state our results in their full strength. This is stronger
than usual in the theory of RDS where an exceptional $\P$-zero set is allowed.

%de2.22 #&#
\begin{definition}
Let $\mcD$ be a universe of random sets. Then $\vp$ is said to be
$\mcD
$-asymptotically compact in $X$ if the sequence $\vp(t_n,\t
_{-t_n}\omega
)x_n$ has a convergent subsequence in $X$, for all $\omega\in\O$, $t_n
\to\infty$, $x_n \in D(\t_{-t_n}\omega)$ and $D \in\mcD$.
\end{definition}

%de2.23 #&#
\begin{definition}
Let $\mcD$ be a universe of random sets. A $\mcD$-random attractor for
an RDS $\vp$ is a compact random set $A \in\mcD$ satisfying:
\begin{longlist}[(ii)]
\item[(i)] $A$ is invariant, that is, $\vp(t,\omega)A(\omega)=
A(\theta_t \omega)$
for all $t > 0$.
\item[(ii)] $A$ is $\mcD$-attracting.
\end{longlist}
\end{definition}

Since we require $A \in\mcD$ the random attractor for an RDS is
uniquely determined.

In \cite{LG08} the assumption of continuity of RDS has been weakened
while preserving sufficient criteria for the existence of random
attractors. This allowed the authors to study RDS on subspaces of their
``original'' state spaces. We prove generalizations of these results
and identify some underlying structures, which will allow to prove the
existence of random attractors for $\vp$ as an RDS on $L^p(\mcO)$ for
all $p\in[1,\infty)$. If condition $(\mcO1)$ is satisfied we will
also obtain the existence of a random attractor with respect to the
$L^\infty$ norm.

%de2.24 #&#
\begin{definition}%\label{defquasicts}
An RDS $\vp$ on a Banach space $X$ endowed with some topology $\tau$
is said to be quasi-$\tau$-continuous if $\vp(t_n,\omega)x_n \to
^\tau\vp
(t,\omega)x$, whenever $(t_n,x_n) \in\R_+ \times X$ is a sequence such
that $\vp(t_n,\omega)x_n$ is bounded and $(t_n,x_n) \to(t,x)$ for
$n\to
\infty$. Here ``$\to^\tau$'' denotes convergence with respect to
$\tau
$-topology.
\end{definition}
In \cite{LG08} a general result proving quasi-continuity for
restrictions of continuous RDS to subspaces of the state space has been
proven. More precisely:\vspace*{6pt}

\cite{LG08}, Proposition 3.3:
\textit{Let $Y$, $X$ be Banach spaces such that $i\dvtx Y \hookrightarrow X$ and
$i^*\dvtx X^* \hookrightarrow Y^*$ are dense and continuous. If $\vp$
is an
RDS on $X$, $Y$ (resp.) and $\vp$ is (norm-weak) continuous on $X$,
then $\vp$ is quasi-weakly-continuous on $Y$, that is, quasi-$\tau
$-continuous for $\tau$ being the weak topology on $Y$.}\vspace*{6pt}

If $Y$ is a reflexive space, then continuity and density of $i\dvtx Y
\hookrightarrow X$ implies the same for $i^*\dvtx X^* \hookrightarrow Y^*$.
For nonreflexive spaces the situation may be more involved, and, in
general, one may only conclude the existence of the continuous map
$i^*\dvtx X^* \hookrightarrow Y^*$. However, even in the nonreflexive case
$ _{Y^*}\langle\cdot,\cdot\rangle_Y\dvtx i^*(X^*) \times Y \to\R$ defines a
duality mapping,
that is:
\begin{longlist}[(ii)]
\item[(i)] $ _{Y^*}\langle i^*(x^*),y\rangle_Y= 0$ for all $y \in Y$ implies
$i^*(x^*) = 0$,
\item[(ii)] $ _{Y^*}\langle i^*(x^*),y\rangle_Y= 0$ for all $x^* \in
X^*$ implies
$y = 0$.
\end{longlist}
Since $i^*(X^*) \subseteq Y^*$ is a linear subspace and $ _{Y^*}\langle\cdot,\cdot\rangle
_Y\dvtx i^*(X^*) \times Y \to\R$ is a duality mapping, the corresponding
weak topology $\s(Y,\overline{i^*(X^*)})$ on $Y$ is Hausdorff, where
$\overline{i^*(X^*)}$ denotes the closure of $i^*(X^*)$ with respect to
$\|\cdot\|_{Y^*}$. Norm-weak continuity of $\vp$ in $X$ just means
continuity of $(t,x) \mapsto _{X^*}\langle x^*,\vp(t,\omega
)x\rangle_X$ for
all $x^*
\in X^*$, $\omega\in\O$. Hence, norm-weak continuity of $\vp$ in $X$
implies norm-$\s(Y,i^*(X^*))$ continuity on $Y$. On bounded sets $B
\subseteq Y$ we have $\s(Y,i^*(X^*)) \cap B = \s(Y,\overline{i^*(X^*)})
\cap B$. This is the precise idea of quasi-continuity. We obtain:
%
%pr2.25 #&#
\begin{proposition}\label{propquasi-tau-cont}
Let $X, Y$ be Banach spaces such that $i\dvtx Y \hookrightarrow X$ is
dense and continuous. If $\vp$ is an RDS on $X$, $Y$ and $\vp$ is
(norm-weak) continuous on $X$, then $\vp$ is quasi-$\s(Y,\overline
{i^*(X^*)})$-continuous on $Y$.
\end{proposition}
In the following let $\mcD$ be a universe of random sets and $\k$ be
the Kuratowski measure of noncompactness. We will prove that in the
proof of existence of random attractors the assumption of
omega-limit-compactness can be replaced by\vadjust{\goodbreak} asymptotic compactness. This
indeed weakens the assumptions since every $\mcD$-omega-limit compact
RDS $\vp$, that is, satisfying
\[
\lim_{T \to\infty} \k \biggl( \bigcup_{t\ge T}
\vp(t,\t _{-t}\omega)D(\t _{-t}\omega) \biggr) = 0
\]
for all $\omega\in\O$ and $D \in\mcD$ is $\mcD$-asymptotically compact.

For a topology $\tau$ on a Banach space $X$ and a random set $B$ we
define the $\O$-limit set
\[
\Omega^\tau(B,\omega) = \bigl\{ y \in X| \exists t_n
\to\infty, x_n \in B(\t _{-t_n}\omega),
\vp(t_n,\t_{-t_n}\omega)x_n
\to^\tau y\bigr\}.
\]
$\O$-limit sets with respect to the norm topology are simply denoted by
$\Omega(B,\omega)$. One of the ideas in \cite{LG08} in order to allow
quasi-weak-continuity of $\vp$ is to consider $\O$-limit sets with
respect to the weak topology replacing the usual norm topology. For
asymptotically compact RDS these notions actually coincide:
%
%le2.26 #&#
\begin{lemma}\label{lemmaascmpolimit}
Let $\vp$ be a $\mcD$-asymptotically compact RDS on the Banach space
$X$ endowed with a Hausdorff topology $\tau$ that is weaker than the
norm topology. Then
\[
\Omega(B,\omega) = \Omega^\tau(B,\omega)\qquad \forall B \in\mcD.
\]
\end{lemma}

In the proof of existence of random attractors we can replace $\mcD
$-omega-limit-compactness by $\mcD$-asymptotic compactness due to the
following observation
%
%le2.27 #&#
\begin{lemma}\label{lemmaascmpolimit2}
Let $\vp$ be a $\mcD$-asymptotically compact, quasi-$\tau$-continuous
RDS on the Banach space $X$ endowed with a Hausdorff topology $\tau$
that is weaker than the norm topology. Further assume that there is a
bounded $\mcD$-attracting set~$F$. Then $\Omega(B,\omega)$ is a nonempty,
compact, invariant set for each $B \in\mcD$, $B \ne\varnothing$,
$\omega
\in\O$.
\end{lemma}
If we work with the weaker notion of absorption occurring only $\P
$-a.s., then invariance in Lemma \ref{lemmaascmpolimit2} is
satisfied only crudely. That is $\vp(t,\omega)\O(B,\omega)=\O(B,\t
_t\omega)$ on a
$\P$-zero that may depend on $t$. In the proof of the existence of
random attractors this obstacle can be resolved by a ``perfection''
result proving that there is an indistinguishable, perfectly invariant
modification of $\O(B,\omega)$.

With these preparations it is easy to see that the proof of \cite{LG08}, Theorem 4.1, can be modified so that only quasi-$\tau
$-continuity and asymptotic compactness with respect to the universe of
all bounded deterministic sets has to be assumed.

In our case the universe of absorbed sets will be much larger than just
deterministic bounded sets. This allows us to drop the assumption of
ergodicity of the underlying metric dynamical system. In conclusion we
obtain the following:
%
%th2.28 #&#
\begin{theorem}\label{thmexra}
Let $\vp$ be a quasi-$\tau$-continuous RDS on a Banach space $X$,
where $\tau$ is a Hausdorff topology that is weaker than the norm
topology. Then $\vp$ has a $\mcD$-random attractor if and only if:
\begin{longlist}[(ii)]
\item[(i)] $\vp$ has a bounded $\mcD$-attracting random set $F \in
\mcD$.
\item[(ii)] $\vp$ is $\mcD$-asymptotically compact in $X$.
\end{longlist}
\end{theorem}

%s2.3 #&#
\subsection{\texorpdfstring{RDS and random attractors for \protect\eqref{eqnroughPDE}}
{RDS and random attractors for (1.2)}}

Let $(\Omega,\mathcal{F},\mathcal{F}_t,\mathbb{P})$ be a filtered
probability space, $(z_t)_{t \in\R}$ be an $\R^N$-valued adapted
stochastic process and $((\O,\mcF,\P),(\theta_t)_{t \in\R})$ be a
metric dynamical system. We assume
\begin{longlist}[$(S1)$]
\item[$(\mathrm{S1})$] (Strictly stationary increments).\footnote{This property
is also called ``perfect helix property'' \cite{AS95}.} For all $t,s
\in\R$, $\omega\in\O$
\[
z_t(\omega)-z_s(\omega) = z_{t-s}(
\t_s \omega).
\]
We assume $z_0 = 0$ for notational convenience only.
\item[$(\mathrm{S2})$] (Regularity). $z_t$ has continuous paths.
\end{longlist}

Adaptedness and $(\mathrm{S2})$ imply joint measurability of $z$, that is,
$z\dvtx\R\times\O\to\R^N$ is $(\mcB(\R) \otimes\mcF,\mcB(\R^N))$
measurable. Note
\[
\label{eqnmustrongstat} \mu_t(\omega)-\mu_s(\omega) =
\sum_{k=1}^N f_k
\bigl(z^k_t(\omega )-z^k_s(
\omega)\bigr) = \mu _{t-s}(\t_s \omega),
\]
and recall that $f_k$ are functions depending on the space variable.

By \cite{GLR11}, Lemma 3.1, for each $\R^N$ valued process $\td z_t$
with $\td z_0 = 0$ a.s., stationary increments and a.s. continuous
paths there exists a metric dynamical system $((\O,\mcF,\P),(\theta
_t)_{t \in\R})$ and a version $z_t$ of $\td z_t$ on $((\O,\mcF,\P
),(\theta_t)_{t \in\R})$ such that $z_t$ satisfies $(\mathrm{S1})$, $(\mathrm{S2})$. In
particular, applications include fractional Brownian motion with
arbitrary Hurst parameter.

Using the pathwise results obtained in Section~\ref{ssecRPME}, we
define the RDS $\vp$ on $X:= L^1(\mcO)$ associated to \eqref
{eqnroughPDE}. For $t \ge s$, $\omega\in\O$ and $x \in L^1(\mcO)$ let
$X(t,s;\omega)x$ denote the unique limit solution to \eqref{eqnroughPDE}
on $[s,\infty)$ with $X_s = x$ and driving signal $z = z(\omega)$.
%
%de2.29 #&#
\begin{definition}\label{eqndefvpflow}
For $t \ge s$, $\omega\in\O$ and $x \in L^1(\mcO)$ define
\[
\vp(t-s,\t_s \omega)x:= X(t,s;\omega)x.
\]
\end{definition}

%th2.30 #&#
\begin{theorem}\label{thmgenerationRDS}
The map $\vp$ from Definition \ref{eqndefvpflow} is a continuous
RDS on $X=L^1(\mcO)$ and thus a quasi-weakly-continuous RDS on each
$L^p(\mcO)$, $p \in[1,\infty)$. In addition, $\vp$ is a
quasi-weakly$^*$-continuous RDS on $L^\infty(\mcO)$. $\vp$ satisfies
comparison, that is, for $x_1,x_2 \in X$ with $x_1 \le x_2$ a.e. in
$\mcO$
\[
\vp(t,\omega)x_1 \le\vp(t,\omega)x_2\qquad\mbox{a.e. in }
\mcO.
\]
Moreover, $\vp$ satisfies $\vp(t,\omega)0 = 0$ and:
\begin{longlist}[(iii)]
\item[(i)] $x \mapsto\vp(t,\omega)x$ is Lipschitz continuous on $X$,
locally uniformly in $t$.
\item[(ii)] $t \mapsto\vp(t,\omega)x$ is continuous in $X$.
\item[(iii)] $\vp(t,\omega)x \le U_t(\omega)$ a.e. in $\mcO$ for
all $t \ge
0$, $\omega\in\O$, with $U$ as in Theorem~\ref{thmexistenceweak}.
\item[(iv)] $\vp$ satisfies the same regularity properties as for the
pathwise solutions obtained in Theorem \ref{thmctnlimitsoln}.
\end{longlist}
\end{theorem}
For the general theory of order preserving, continuous RDS we refer to
\cite{C02} and the references therein.

Let $\mcD$ be the universe of all random closed sets in $X$. Using the
uniform $L^\infty$ bound obtained in Theorem \ref{thmgenerationRDS}
we obtain the existence of a $\mcD$-absorbing set $F$ which is bounded
even in $L^\infty(\mcO)$. In fact, the absorption time $t_D(\omega)$
can be
chosen independently of $\omega$ and $D$; cf. Proposition \ref
{propbddabs} below.

If the domain $\mcO$ satisfies condition $(\mcO1)$ by combining the
uniform $L^\infty(\mcO)$ estimate and Theorem \ref{thmgenerationRDS}(iii), we will conclude that the set $\vp(\delta,\omega)F(\omega)$
with $\delta> 0$ is
compact in $C^0(\bar\mcO)$ and $\mcD$-absorbing in $\mcD$. By Theorem
\ref{thmexra} this implies the existence of a $\mcD$-random
attractor. If the domain $\mcO$ does not necessarily satisfy condition
$(\mcO1)$ we only get inner continuity, that is, equicontinuity of
$\vp
(\delta,\omega)F(\omega)$ on each compact set $K \subseteq\mcO$.
In this case we
cannot conclude the existence of a compact $\mcD$-absorbing set, but we
can still prove $\mcD$-asymptotic compactness for $\vp$. By Theorem
\ref
{thmexra} we arrive at the following:

%th2.31 #&#
\begin{theorem}\label{thmexistenceRA}
Let $\mcD$ be the universe of all random closed sets in $L^1(\mcO)$.
The RDS $\vp$ has a $\mcD$-random attractor $A$ [as an RDS on
$L^1(\mcO
)$]. $A$~is compact in each $L^p(\mcO)$ and attracts all sets in $\mcD$
in $L^p$-norm, $p \in[1,\infty)$.

Moreover, $A(\omega)$ is a bounded set in $L^\infty(\mcO)$ and the
functions in $A(\omega)$ are equicontinuous on every compact set $K
\subseteq\mcO$.

If $(\mcO1)$ is satisfied, then $A(\omega)$ is a compact set in
$C^0(\bar
\mcO)$ and attracts all sets in $\mcD$ in $L^\infty$-norm.
\end{theorem}

%
%%%-------------------------------------------------------------------------------------------------------
% -------------------------------- Proofs for Section 1
%----------------------------------------------
%
%%%-------------------------------------------------------------------------------------------------------

%s3 #&#
\section{Porous medium equation driven by rough signals}\label{secroughcase}
%
%%%-------------------------------------------------------------------------------------------------------
% -------------------------------- Transformation for bounded variation
%noise-------------------------
%
%%%-------------------------------------------------------------------------------------------------------

%s3.1 #&#
\subsection{Transformation for signals of bounded variations}
In this section we prove Theorem \ref{thmtransformationbddvarn}. Let
$z \in C^{1-\operatorname{var}}([0,T];\R^N)$, $\eta\in C^{1,2}(\bar\mcO_T)$ with
$\eta= 0$ on $\mcP\mcO_T$, and let $X$ be a very weak solution to
\eqref{eqnroughPDE}. We prove that $Y:= e^{\mu}X$ is a very weak
solution to \eqref{eqnroughPDEtransformed}. Let $z^{\ve} \in
C^1([0,T];\R^N)$ such that $z^{(\ve)} \to z$ in $C([0,T];\R^N)$ with
uniformly bounded variation, that is, $\sup_{\ve> 0} \|
z^{(\ve
)}\|_{C^{1-\operatorname{var}}} < \infty$. Define $\mu^{\ve}$ as in \eqref
{eqnmudef}. Then
\[
- \int_{\mcO_T} Y_r \partial_r
\eta_r \,d\xi \,dr = - \lim_{\ve\to0} \int
_{\mcO_T} X_r e^{\mu^{(\ve)}_r} \partial_r
\eta_r \,d\xi \,dr
\]
and
\begin{eqnarray*}
&& - \int_{\mcO_T} X_r e^{\mu^{(\ve)}_r}
\partial_r \eta_r \,d\xi \,dr
\\
&&\quad= - \int_{\mcO_T} X_r \partial_r
\bigl( e^{\mu^{(\ve)}_r} \eta_r \bigr) \,d\xi \,dr + \int
_{\mcO_T} X_r \eta_r
\partial_r e^{\mu
^{(\ve
)}_r} \,d\xi \,dr
\\
&&\quad= \int_{\mcO} X_0 e^{\mu^{(\ve)}_0}
\eta_0 \,d\xi+ \int_{\mcO
_T} \Phi (X_r)
\D \bigl( e^{\mu^{(\ve)}_r} \eta_r \bigr) \,d\xi \,dr
\\
&&\qquad{} + \int_0^T \biggl( \int
_{\mcO} B(X_r) \bigl( e^{\mu
^{(\ve
)}_r}
\eta_r \bigr) \,d\xi \biggr) \,dz_r - \int
_{\mcO_T} X_r \eta_r e^{\mu^{(\ve)}_r}
\partial_r \mu^{(\ve)} \,d\xi \,dr
\\
&&\quad= \int_{\mcO} X_0 e^{\mu^{(\ve)}_0}
\eta_0 \,d\xi+ \int_{\mcO
_T} \Phi (X_r)
\D \bigl( e^{\mu^{(\ve)}_r} \eta_r \bigr) \,d\xi \,dr
\\
&&\qquad{} + \int_0^T \biggl( \int
_{\mcO} B(X_r) \bigl( e^{\mu
^{(\ve
)}_r}
\eta_r \bigr) \,d\xi \biggr)\,dz_r - \int
_0^T \biggl( \int_{\mcO}
B(X_r) \bigl( \eta_r e^{\mu^{(\ve)}_r} \bigr) \,d\xi
\biggr) \,dz_r^{(\ve)}.
\end{eqnarray*}
By continuity of the Riemann--Stieltjes integral with respect to the
convergence $z^{(\ve)} \to z$ specified above and uniform convergence
of the integrands (cf. \cite{FV10}, Proposition 2.7), we can take the
limit $\ve\to0$ to obtain the assertion. The other implication
follows by similar arguments.

%
%%%-------------------------------------------------------------------------------------------------------
% -------------------------------- Uniqueness of essentially bounded
%very weak solutions ---------
%
%%%-------------------------------------------------------------------------------------------------------

%
%s3.2 #&#
\subsection{Uniqueness of essentially bounded very weak solutions}%
We prove Theorem \ref{thmuniquenessveryweaksoln}. The proof uses
ideas first developed in \cite{BCM84} combined with interval splitting
techniques that have also been used in \cite{BR11b}. Let $Y^{(1)}, Y^{(2)}$ be two essentially bounded very weak solutions to \eqref
{eqnroughPDEtransformed} with the same initial condition $Y_0 \in
L^1(\mcO)$, and let $Y = Y^{(1)}-Y^{(2)}$. Then
\begin{eqnarray*}
\int_{\mcO_T} Y_r \partial_r\eta \,d\xi
\,dr &=& -\int_{\mcO_T} \bigl( \Phi\bigl(e^{-\mu_r}Y^{(1)}_r
\bigr)-\Phi\bigl(e^{-\mu
_r}Y^{(2)}_r\bigr) \bigr) \D
\bigl(e^{\mu_r} \eta_r \bigr) \,d\xi \,dr
\\
&=& -\int_{\mcO_T} a_r Y_r \D
\bigl(e^{\mu_r} \eta_r \bigr) \,d\xi \,dr
\end{eqnarray*}
for all $\eta\in C^{1,2}(\bar\mcO_T)$ with $\eta= 0$ on $\mcP\mcO
_T$, where
\[
a_t:= \cases{ \displaystyle\frac{\Phi(e^{-\mu_t}Y^{(1)}_t)-\Phi(e^{-\mu
_t}Y^{(2)}_t)}{Y^{(1)}_t-Y^{(2)}_t}, & \quad$\mbox{for }
Y^{(1)}_t \ne Y^{(2)}_t,$ \vspace*{2pt}
\cr
0, & \quad$\mbox{otherwise}.$}
\]
Let $z^{\ve} \in C^\infty([0,T];\R^N)$ with $z^{(\ve)} \to z$ in
$C([0,T];\R^N)$ such that for $\mu^{(\ve)}$ as in~\eqref
{eqnmudef} we
have $\sup_{t \in[0,T]}\|e^{\mu^{(\ve)}_t}-e^{\mu_t}\|_{C^2(\mcO
)} \le
\ve^2$. By equicontinuity of $z^{(\ve)}$ we can choose a partition $0 =
\tau_0 < \cdots < \tau_N = T$ such that
%
%e3.1 #&#
\begin{eqnarray}
\label{eqnchoicetau2} \delta&:=& \bigl\|e^{\mu} \bigl( 2\bigl|\nabla\bigl(
\mu^{(\ve)}-\mu_{\tau_i}\bigr)\bigr|^4 + 2\bigl|\D \bigl(
\mu^{(\ve)}-\mu_{\tau_i}\bigr)\bigr|^2 \nonumber\\
&&\hspace*{106pt}{}+ \bigl|\nabla\bigl(\mu^{(\ve)}-\mu_{\tau_i}\bigr)\bigr|^2
\bigr) \bigr\|_{L^\infty
([\tau
_i,\tau_{i+1}] \times\mcO)} \\
&<& \frac{1}{16C}\nonumber
\end{eqnarray}
for all $i = 0,\ldots,N-1$, $\ve> 0$, where $C$ is a constant that will
be specified below. Let $\g= \max_i\{ |\tau_{i+1}-\tau_i| \}$.
We prove $Y = 0$ a.e. via induction over \mbox{$i=0,\ldots,N-1$}. Thus assume $Y
= 0$ on ${[0,\tau_i] \times\mcO}$ almost everywhere. We can modify
$\tau_i$ so that~\eqref{eqnchoicetau2} is preserved and $Y(\tau
_i) =
0$ a.e. in $\mcO$. Define $\mcO_i:= [\tau_i,\tau_{i+1}] \times
\mcO$. Then
\[
\int_{\mcO_i} Y_r \bigl( \partial_r
\eta_r + a_r \D\bigl(e^{\mu_r}
\eta_r\bigr) \bigr) \,d\xi \,dr = 0
\]
for all $\eta\in C^{1,2}([\tau_i,\tau_{i+1}] \times\bar\mcO)$ with
$\eta= 0$ on $\mcP\mcO_{[\tau_{i},\tau_{i+1}]}$.

For $Y_t^{(1)} \ne Y_t^{(2)}$ we have $a_t = e^{-\mu_t} \dot\Phi
(\zeta
_t)$ with $\z_t \in[e^{-\mu_t} Y_t^{(1)},e^{-\mu_t} Y_t^{(2)}]$ and
thus $\|a\|_{L^\infty(\mcO_T)} < \infty$ by essential boundedness of
$Y^{(i)}$.
We consider a nondegenerate, smooth approximation of $a$. Set $\hat
a_\ve:= a \vee\ve$ and let $a_{\ve,\delta}$ be a smooth
approximation of
$\hat a_\ve$ such that $a_{\ve,\delta} \ge\ve$ and $\int_{\mcO
_T} |Y|^2
(\hat a_\ve- a_{\ve,\delta})^2  \,dx \,dr \le\delta$. Then choose
$a_{\ve} =
a_{\ve,\ve^2}$.

Let $\eta= e^{-\mu_{\tau_i}} \vp$ with $\vp$ being the classical
solution to
%
%e3.2 #&#
\begin{eqnarray}
\label{eqnvpdefn} \partial_t\vp+ a_\ve e^{\mu_{\tau_i}}
\D\bigl(e^{\mu^{(\ve)}-\mu
_{\tau_i}} \vp\bigr) - \t&=& 0\qquad \mbox{on } \mcO_T,
\nonumber
\\[-8pt]
\\[-8pt]
\nonumber
\vp&= &0\qquad \mbox{on } \mcP\mcO_{[\tau_i,\tau_{i+1}]},
\end{eqnarray}
where $\t$ is an arbitrary smooth testfunction. Time inversion
transforms \eqref{eqnvpdefn} into a uniformly parabolic linear
equation with smooth coefficients. Thus, unique existence of classical
solutions to \eqref{eqnvpdefn} follows from \cite{LSU67}, Theorem 6.2, page
457. Then
%
%e3.3 #&#
\begin{eqnarray}
\label{eqnuniqueness1}\qquad  0 &=& \int_{\mcO_i} Y_r
\bigl( \partial_r \eta+ a_r \D\bigl(e^{\mu_r}
\eta _r\bigr) \bigr) \,d\xi \,dr
\nonumber
\\
&=& \int_{\mcO_i} Y_r \bigl( \partial_r
\eta+ a_{\ve,r} \D\bigl(e^{\mu
^{(\ve
)}_r} \eta\bigr) \bigr) \,d\xi \,dr + \int
_{\mcO_i} Y_r (a_r - a_{\ve,r})
\D \bigl(e^{\mu_r^{(\ve)}} \eta_r\bigr) \,d\xi \,dr
\nonumber
\\
&&{}+ \int_{\mcO_i} Y_r a_{r} \D\bigl(
\bigl(e^{\mu_r}-e^{\mu^{(\ve)}_r}\bigr) \eta_r\bigr) \,d\xi \,dr
\\
&= &\int_{\mcO_i} e^{-\mu_{\tau_i}} Y_r
\t_r \,d\xi \,dr + \int_{\mcO_i} Y_r
(a_r - a_{\ve,r}) \D\bigl(e^{\mu_r^{(\ve)}-\mu_{\tau_i}}
\vp_r\bigr) \,d\xi \,dr\nonumber
\\
&&{} + \int_{\mcO_i} Y_r a_{r} \D\bigl(
\bigl(e^{\mu_r}-e^{\mu^{(\ve)}_r}\bigr) e^{-\mu_{\tau_i}} \vp_r
\bigr) \,d\xi \,dr.
\nonumber
\end{eqnarray}
We need to prove that the last two terms vanish for $\ve\to0$. For
this we first derive a bound for $\int_{\mcO_i} a_{\ve,r} |\D
(e^{\mu
_r^{(\ve)}-\mu_{\tau_i}} \vp_r)|^2  \,d\xi \,dr$. Let $\z\in C^\infty
(\R
)$ with $\z(\tau_i)=0$, $\z\le1$ on $[0,T]$ and $\dot\z\ge c > 0$
for some $c \le\frac{1}{4 \g}$. Multiplying \eqref{eqnvpdefn} by
$\z
\D\vp$ and integrating yields
\begin{eqnarray*}
&&\int_{\mcO_i} (\partial_r\vp_r )
\z_r \D\vp_r \,d\xi \,dr
\\
&&\qquad = \int_{\mcO_i}
\bigl( -a_{\ve,r} e^{\mu_{\tau_i}} \D\bigl(e^{\mu_r^{(\ve
)}-\mu
_{\tau_i}} \vp\bigr)
\z_r \D\vp_r + \t_r \z_r \D
\vp_r \bigr) \,d\xi \,dr.
\end{eqnarray*}
Note that
\begin{eqnarray*}
\D\vp&=& \D \bigl( e^{-(\mu^{(\ve)}-\mu_{\tau_i})} e^{\mu^{(\ve
)}-\mu
_{\tau_i}} \vp \bigr)
\\
&=& \vp\bigl(-\bigl|\nabla\bigl(\mu^{(\ve)}-\mu_{\tau_i}
\bigr)\bigr|^2 - \D\bigl(\mu^{(\ve
)}-\mu _{\tau_i}\bigr)
\bigr) -2 \nabla\bigl(\mu^{(\ve)}-\mu_{\tau_i}\bigr) \nabla\vp
\\
&&{}+ e^{-(\mu^{(\ve)}-\mu_{\tau_i})} \D e^{\mu^{(\ve
)}-\mu
_{\tau_i}} \vp.
\end{eqnarray*}
Hence
\begin{eqnarray*}
&&\frac{1}{2} \int_{\mcO_i} |\nabla\vp_r|^2
\dot\z_r \,d\xi \,dr + \int_{\mcO_i} a_{\ve,r}
e^{2\mu_{\tau_i}-\mu_r^{(\ve)}}\bigl |\D\bigl(e^{\mu
_r^{(\ve
)}-\mu_{\tau_i}} \vp_r
\bigr)\bigr|^2 \z_r \,d\xi \,dr
\\
&&\qquad= \int_{\mcO_i} a_{\ve,r} \z_r
e^{\mu_{\tau_i}} \bigl|\vp_r \D \bigl(e^{\mu
_r^{(\ve)}-\mu_{\tau_i}} \vp_r
\bigr) \bigr|\\
&&\hspace*{48pt}{}\times \bigl( \bigl|\nabla\bigl(\mu_r^{(\ve
)}-\mu_{\tau
_i}
\bigr)\bigr|^2 + \bigl|\D\bigl(\mu_r^{(\ve)}-
\mu_{\tau_i}\bigr)\bigr| \bigr) \,d\xi \,dr
\\
&&\qquad\quad{} +\int_{\mcO_i} 2 \bigl( a_{\ve,r} \z_r
e^{\mu_{\tau
_i}} \bigl|\D \bigl(e^{\mu_r^{(\ve)}-\mu_{\tau_i}} \vp_r\bigr)\bigr| \bigl|\nabla
\bigl(\mu_r^{(\ve
)}-\mu_{\tau
_i}\bigr)\bigr| |\nabla
\vp_r| \\
&&\hspace*{230pt}{}+ \t_r \z_r \D\vp_r
\bigr) \,d\xi \,dr.
\end{eqnarray*}
The first term on the right-hand side is bounded by
\begin{eqnarray*}
&&\int_{\mcO_i} a_{\ve,r} \z_r
e^{\mu_{\tau_i}} \bigl|\vp_r \D\bigl(e^{\mu
_r^{(\ve
)}-\mu_{\tau_i}} \vp_r
\bigr) \bigr| \bigl(\bigl |\nabla\bigl(\mu_r^{(\ve)}-\mu_{\tau_i}
\bigr)\bigr|^2 +\bigl |\D\bigl(\mu_r^{(\ve)}-
\mu_{\tau_i}\bigr)\bigr| \bigr) \,d\xi \,dr
\\
&&\qquad\le\int_{\mcO_T} \frac{1}{4} a_{\ve,r}
\z_r e^{2\mu_{\tau
_i}-\mu
_r^{(\ve)}} \bigl|\D\bigl(e^{\mu_r^{(\ve)}-\mu_{\tau_i}} \vp_r
\bigr)\bigr|^2 \,d\xi \,dr + C \delta \int_{\mcO_T} \dot
\z_r |\nabla\vp_r|^2 \,d\xi \,dr
\end{eqnarray*}
and the second by
\begin{eqnarray*}
&&\int_{\mcO_i} \bigl(2 a_{\ve,r} \z_r
e^{\mu_{\tau_i}} \bigl|\D\bigl(e^{\mu
_r^{(\ve
)}-\mu_{\tau_i}} \vp_r\bigr)\bigr| \bigl|\nabla
\bigl(\mu_r^{(\ve)}-\mu_{\tau_i}\bigr)\bigr| |\nabla
\vp_r| + \t_r \z_r \D\vp_r
\bigr) \,d\xi \,dr
\\
&&\qquad\le\int_{\mcO_i} \frac{1}{4} a_{\ve,r}
\z_r e^{2\mu_{\tau
_i}-\mu
_r^{(\ve)}} \bigl|\D\bigl(e^{\mu_r^{(\ve)}-\mu_{\tau_i}} \vp_r
\bigr)\bigr|^2 \,d\xi \,dr
\\
&&\hspace*{16pt}\qquad\quad{}+ \biggl(C \delta+ \frac{1}{8}\biggr) \int_{\mcO_i} \dot
\z_r |\nabla\vp _r|^2 \,d\xi \,dr + C \int
_{\mcO_i} |\nabla\t_r|^2 \,d\xi \,dr.
\end{eqnarray*}
Using this we obtain
\begin{eqnarray*}
&&\frac{1}{2} \int_{\mcO_i} |\nabla\vp_r|^2
\dot\z_r \,d\xi \,dr + \int_{\mcO_i} a_{\ve,r}
e^{2\mu_{\tau_i}-\mu_r^{(\ve)}}\bigl |\D\bigl(e^{\mu
_r^{(\ve
)}-\mu_{\tau_i}} \vp_r
\bigr)\bigr|^2 \z_r \,d\xi \,dr
\\
&&\qquad\le\int_{\mcO_i} \frac{1}{2} a_{\ve,r}
\z_r e^{2\mu_{\tau
_i}-\mu
_r^{(\ve)}} \bigl|\D\bigl(e^{\mu_r^{(\ve)}-\mu_{\tau_i}} \vp_r
\bigr)\bigr|^2 \,d\xi \,dr
\\
&&\hspace*{16pt}\qquad\quad{} + \biggl(2C \delta+ \frac{1}{8}\biggr) \int_{\mcO_i}
\dot\z_r |\nabla\vp _r|^2 \,d\xi \,dr + C \int
_{\mcO_i} |\nabla\t_r|^2 \,d\xi \,dr,
\end{eqnarray*}
where $C = C(\|Y^{(i)}\|_{L^\infty([0,T]\times\mcO)},T,\|e^\mu\|
_{L^\infty(\mcO_T)})$ is a generic constant. Since $C$ is independent
of the choice of $\z$, using Fatou's lemma and \eqref{eqnchoicetau2}
we obtain
%
%e3.4 #&#
\begin{eqnarray}
\label{eqnDvpbound} \int_{\mcO_i} a_{\ve,r}
e^{2\mu_{\tau_i}-\mu_r^{(\ve)}} \bigl|\D \bigl(e^{\mu
_r^{(\ve)}-\mu_{\tau_i}} \vp_r
\bigr)\bigr|^2 \,d\xi \,dr \le C \int_{\mcO_i} |\nabla
\t_r|^2 \,d\xi \,dr.
\end{eqnarray}
By the choice $a_\ve$ we have
\begin{eqnarray*}
&&\int_{\mcO_i} |Y_r|^2
\frac{(a_r - a_{\ve,r})^2}{a_{\ve,r}} \,d\xi \,dr
\\
&&\qquad\le\frac{1}{\ve} \biggl( \int_{\mcO_i}
2|Y_r|^2 (a_r - \hat a_{\ve
,r})^2
\,d\xi \,dr + \int_{\mcO_i} 2|Y_r|^2 (\hat
a_{\ve,r} - a_{\ve
,r})^2 \,d\xi \,dr \biggr)
\\
&&\qquad\le4\ve\int_{\mcO_i} |Y_r|^2 \,d\xi \,dr.
\end{eqnarray*}
For the second term in \eqref{eqnuniqueness1} we obtain
\begin{eqnarray*}
&&\int_{\mcO_i} Y_r (a_r -
a_{\ve,r}) \D\bigl(e^{\mu_r^{(\ve)}-\mu
_{\tau_i}} \vp_r\bigr) \,d\xi \,dr
\\
&&\qquad \le \biggl( \int_{\mcO_i} |a_{\ve,r}| \bigl|\D
\bigl(e^{\mu_r^{(\ve)}-\mu
_{\tau
_i}} \vp_r\bigr)\bigr|^2 \,d\xi \,dr
\biggr)^{{1}/ {2}} \biggl( \int_{\mcO_i}
|Y_r|^2 \frac{(a_r - a_{\ve,r})^2}{a_{\ve,r}} \,d\xi \,dr \biggr)^
{{1} /{2}}
\\
&&\qquad\le C \sqrt{\ve} \|\nabla\t\|_{L^2(\mcO_i)} \|Y\|_{L^2(\mcO_i)} \to0
\end{eqnarray*}
for $\ve\to0$. For the third term in \eqref{eqnuniqueness1} we use
\eqref{eqnDvpbound} and $a_\ve\ge\ve$ to get
\[
\|\vp\|_{H^2(\mcO)} \le\frac{C \|\nabla\theta\|_{L^2(\mcO
_i)}}{\ve}.
\]
Hence,
\begin{eqnarray*}
\int_{\mcO_i} Y_r a_{r} \D\bigl(
\bigl(e^{\mu_r}-e^{\mu^{(\ve)}_r}\bigr) e^{-\mu
_{\tau
_i}} \vp_r
\bigr) \,d\xi \,dr
 &\le& C\|\nabla\theta\|_{L^2(\mcO_i)}\frac{\|e^{\mu_r}-e^{\mu
^{(\ve
)}_r}\|_{C^2(\mcO)} }{\ve}
\\
& \le&\ve C\|\nabla\theta\|_{L^2(\mcO_i)} \to0
\end{eqnarray*}
for $\ve\to0$. Taking $\ve\to0$ in \eqref{eqnuniqueness1} yields
\[
0 = \int_{\mcO_i} e^{-\mu_{\tau_i}} Y_r
\t_r \,d\xi \,dr
\]
for any smooth testfunction $\t$. Thus $Y = 0$ in $\mcO_i = [\tau
_i,\tau
_{i+1}] \times\mcO$ almost everywhere. Induction now completes the proof.

%re3.1 #&#
\begin{remark}
The method to prove uniqueness used above fails for fast diffusion
equations, since the difference quotient
\[
a_t:= \cases{ \displaystyle\frac{\Phi(e^{-\mu_t}Y^{(1)}_t)-\Phi(e^{-\mu
_t}Y^{(2)}_t)}{Y^{(1)}_t-Y^{(2)}_t}, & \quad$\mbox{for }
Y^{(1)}_t \ne Y^{(2)}_t,$ \vspace*{2pt}
\cr
0, & \quad$\mbox{otherwise}$}
\]
it not known to remain bounded.
\end{remark}

%
%%%-------------------------------------------------------------------------------------------------------
% -------------------------------- Weak Solutions
%------------------------------------------
%
%%%-------------------------------------------------------------------------------------------------------

%s3.3 #&#
\subsection{Weak solutions and uniform bounds}%\label{secweaksolns}

We will now prove Theorem \ref{thmexistenceweak}. In order to
construct weak solutions to \eqref{eqnroughPDEtransformed} several
steps are needed. First we will consider approximating equations, where
the degenerate nonlinearity $\Phi$ is replaced by nondegenerate
functions $\Phi^{(\delta)}$ and the driving signals $z$ are
approximated by
smooth signals $z^{(\delta)}$ (Section~\ref{ssecnondegapprox}).
Existence of classical solutions to these equations follows from
well-known existence results; cf., for example, \cite{LSU67}. Then we
will prove uniform $L^\infty$ bounds for these approximating solutions
(Section~\ref{ssecL-infty-bound}) which will be used in Section~\ref{ssecweaksoln} to finally construct weak solutions to \eqref
{eqnroughPDEtransformed} by monotonicity methods.

%s3.3.1 #&#
\subsubsection{Nondegenerate, smooth approximation and classical
solutions}\label{ssecnondegapprox}
For $\delta> 0$ we choose an approximating function $\Phi^{(\delta)}
\in
C^\infty(\R)$ such that:
\begin{longlist}[(iii)]\label{eqnapproxprop}
\item[(i)] $\Phi^{(\delta)}(0) = 0$ and $\Phi^{(\delta)}$ is
anti-symmetric in $0$;
\item[(ii)] $\Phi^{(\delta)}(r) = \Phi(r)$, for all $\delta\le|r|
\le\frac
{1}{\delta}$;
\item[(iii)] for all $r \in\R$,
\begin{eqnarray*}
0 &< &C_1(\delta) \le\dot\Phi^{(\delta)}(r) \le C_2(
\delta) < \infty,
\\
\ddot\Phi^{(\delta)}(r)&\le& C_2(\delta) < \infty.
\end{eqnarray*}
\end{longlist}
In particular $\Phi^{(\delta)}(r) = \int_0^r \dot\Phi^{(\delta
)}(s) \,ds \le
C_2(\delta)r$. We further choose smooth approximations $z^{(\delta)}
\in
C^\infty([0,T];\R^N)$ of the driving signal $z$. Using the homogeneity
of $\Phi$ we can rewrite \eqref{eqnroughPDEtransformed} as
%
%e3.5 #&#
\begin{equation}
\label{eqnroughPDEtransformed2} \partial_t Y_t =
e^{\mu_t} \D \bigl(\Phi\bigl(e^{-\mu_t}\bigr) \Phi (Y_t)
\bigr) \qquad\mbox{on } \mcO_T.
\end{equation}
One advantage of rewriting \eqref{eqnroughPDEtransformed} in this
form prior to approximating $\Phi$ by $\Phi^{(\delta)}$ is that the
substitution $Z^{(\delta)}:=\Phi^{(\delta)}(Y^{(\delta)})$ can
still be used in the
approximating equation so that the continuity results obtained in \cite
{DB83} can be applied. We construct a solution to \eqref
{eqnroughPDEtransformed2} by considering approximating equations
%
%e3.6 #&#
\begin{eqnarray}
\label{eqnapprox} \partial_t Y^{(\delta)}_t &=&
e^{\mu_t} \D \bigl( \Phi\bigl(e^{-\mu_t}\bigr) \Phi^{(\delta
)}
\bigl(Y^{(\delta)}_t\bigr) \bigr)\qquad \mbox{on } \mcO_T,
\nonumber
\\[-8pt]
\\[-8pt]
\nonumber
Y^{(\delta)}(0) &=& Y_0\qquad \mbox{on } \mcO,
\end{eqnarray}
with homogeneous Dirichlet boundary conditions and smooth signals $z
\in  C^\infty([0,T];\R^N)$. Equation \eqref{eqnapprox} is a
quasilinear, uniformly parabolic equation with smooth coefficients.
From standard results the unique existence of a classical solution
follows; cf., for example, \cite{LSU67}, Theorem 6.2, page 457.

%s3.3.2 #&#
\subsubsection{\texorpdfstring{Uniform $L^\infty(\mcO_T)$ bound for classical solutions to \protect\eqref{eqnapprox}}
{Uniform L infinity (O T) bound for classical solutions to (3.6)}}\label{ssecL-infty-bound}
%
%le3.2 #&#
\begin{lemma}\label{lemmaL-infty-boundclassicalsolns}
Let $Y_0 \in L^\infty(\mcO)$, $\{z^{(\ve)} \in C^\infty([0,T];\R
^N)| \ve>0\}$ be a compact set in $C([0,T];\R^N)$ and $Y^{(\delta,\ve)}$
be a
classical solution to \eqref{eqnapprox} driven by $z^{(\ve)}$. There
are constants $ \s_0 = \s_0(\|Y_0\|_{L^\infty(\mcO)}) > 0$, $M > 0$
depending only on $\|Y_0\|_{L^\infty(\mcO)}$, the uniform bound and
uniform modulus of continuity of $\{z^{(\ve)}\}$, piecewise smooth maps
$K^{(\s_0,\ve)}$ and a $\delta_0=\delta_0(\sup_{\ve> 0}\|
z^{(\ve)}\|
_{L^\infty(\mcO_T)},\break \|Y_0\|_{L^\infty(\mcO)})>0$ such that
\[
Y^{(\delta,\ve)} \le K^{(\s_0,\ve)} \le M \qquad\mbox{on } [0,T] \times\mcO
\]
for all $\delta\le\delta_0$.
\end{lemma}
\begin{pf}
We will construct a piecewise smooth (thus bounded) supersolution to
%
%e3.7 #&#
\begin{eqnarray}
\label{eqnapproxforsuper} \partial_t Y^{(\delta,\ve)}_t
&= e^{\mu^{(\ve)}_t} \D \bigl(\Phi \bigl(e^{-\mu
^{(\ve)}_t}\bigr)
\Phi^{(\delta)}\bigl(Y^{(\delta,\ve)}_t\bigr) \bigr)\qquad \mbox{on }
\mcO_{T},
\end{eqnarray}
with initial condition $Y_0$ and homogeneous Dirichlet boundary
conditions. Let $R > 0$ such that $\bar\mcO\subseteq B_R(0)$. Since
$\{
z^{(\ve)}\}$ is a set of equicontinuous functions, there exists a $\g>
0$ and a partition $0=\tau_0 < \tau_1 < \cdots < \tau_L = T$ with $1 >
\tau
_i-\tau_{i-1} > \g$ (hence $L \le\frac{T}{\g}$) such that
%
%e3.8 #&#
%e3.9 #&#
%e3.10 #&#
\begin{eqnarray}
\label{eqnchoicetau} \frac{1}{2} &\le& \Bigl( \mathop{\inf
_{\xi\in\bar\mcO}}_{ t \in[\tau_i,\tau
_{i+1})} e^{\mu^{(\ve)}_t-\mu^{(\ve)}_{\tau_i}}\Phi
\bigl(e^{\mu^{(\ve
)}_{\tau
_i}-\mu^{(\ve)}_t}\bigr) \Bigr)
\nonumber\\
&&{} \times\biggl( 1 - \frac{m R}{d} \sup_{t \in[\tau_i,\tau_{i+1})} \biggl( 2\bigl\|
\nabla\bigl(\mu^{(\ve)}_{\tau_i}-\mu^{(\ve)}_t
\bigr)\bigr\|_{L^\infty(\mcO)}
\nonumber
\\[-8pt]
\\[-8pt]
\nonumber
&&\hspace*{103pt}{} + \frac
{Rm}{2} \bigl\|\nabla\bigl(\mu^{(\ve)}_{\tau_i}-
\mu^{(\ve)}_t\bigr)\bigr\|_{L^\infty
(\mcO
)}^2
\\
&&\hspace*{112pt}{} + \frac{R}{2} \bigl\|\D\bigl(\mu^{(\ve)}_{\tau_i}-
\mu^{(\ve
)}_t\bigr)\bigr\| _{L^\infty(\mcO)} \biggr) \biggr)\nonumber
\end{eqnarray}
and
\[
\frac{1}{2} \le\mathop{\inf_{\xi\in\bar\mcO,}}_{ t \in[\tau_i,\tau_{i+1})}
e^{(m-1)(\mu
^{(\ve)}_t-\mu^{(\ve)}_{\tau_i})}
\]
for all $i=0,\ldots,L-1$, $\ve> 0$. Let $A^{{(m-1)}/{m}}:= \frac
{R^{{2}/{m}}}{(m-1)d}$, $C_4:= \inf_{\xi\in\mcO}(R^2-|\xi|^2)$ and
consider the inverse $\b:= \Phi^{-1}$. For $\s>0$ we define
\begin{eqnarray*}
K_0^{(\s,\ve)}(t,\xi):= \b \bigl( A (t + \s)^{-{m}/{(m-1)}}
\bigl(R^2-|\xi|^2\bigr) \Phi\bigl(e^{\mu
^{(\ve)}_{\tau_i}}\bigr)
\bigr)
\end{eqnarray*}
and choose $\s_0 = \s_0(\|Y_0\|_{L^\infty(\mcO)})$ so that $\|Y_0\|
_{L^\infty(\mcO)} \le K_0^{(\s_0,\ve)}(0)$. Then inductively define
$\s
_{i+1} = \frac{1}{2} (\s_i+\g)$ for $i=0,\dots,L-1$ (we can thus regard
$\s_i$ as a function of $\s_0$) and let
%
%e3.11 #&#
\begin{eqnarray}
\label{eqndefK} K_i^{(\s_0,\ve)}(t,\xi) &:=& \b \bigl( A (t-
\tau_i + \s_i)^{-{m}/{(m-1)}}\bigl(R^2-|
\xi|^2\bigr) \Phi \bigl(e^{\mu^{(\ve)}_{\tau_i}}\bigr) \bigr)
\nonumber
\\[-8pt]
\\[-8pt]
\nonumber
&=& A^{{1} /{m}} (t-\tau_i + \s_i)^{-{1}/{(m-1)}}
\bigl(R^2-|\xi |^2\bigr)^{ {1} /{m}}
e^{\mu^{(\ve)}_{\tau_i}}
\end{eqnarray}
for $t \in[\tau_i,\tau_{i+1}], \xi \in\mcO.$

By the choice of $\s_i$, $i=1,\dots,L-1$ we have $K^{(\s_0,\ve
)}_i(\tau
_{i+1}) \le K^{(\s_0,\ve)}_{i+1}(\tau_{i+1})$. We note
%
%e3.12 #&#
\begin{eqnarray}
\label{eqnKbounds}\qquad&& A^{{1}/ {m} }\Bigl(1 + \max_{i=0,\ldots,L-1}
\s_i \Bigr)^{-
{1}/{(m-1)}} C_4^{{1}/ {m}}
e^{-\sup_{\ve> 0}\|\mu^{(\ve)}\|
_{L^\infty
(\mcO_T)}}
\nonumber
\\[-8pt]
\\[-8pt]
\nonumber
&&\qquad\le K^{(\s_0,\ve)}_i(t) \le A^{{1} /{m}} \Bigl(
\min_{i=0,\ldots,L-1}\s_i \Bigr)^{-
{1}/{(m-1)}}
R^{{2} /{m}} e^{\sup_{\ve> 0}\|\mu^{(\ve)}\|_{L^\infty
(\mcO_T)}}
\end{eqnarray}
for all $t \in[\tau_i,\tau_{i+1}]$. Hence, we can choose $\delta_0 >0$
(depending only on $\s_0$, $\sup_{\ve> 0}\|z^{(\ve)}\|_{L^\infty
(\mcO
_T)}$) such that
\[
\label{eqnve0} K_i^{(\s_0,\ve)}(t) \in\biggl[\delta,
\frac{1}{\delta}\biggr]
\]
for all $t \in[\tau_i,\tau_{i+1}]$ and $\delta\le\delta_0$. Then
$\Phi^{(\delta
)}(K_i(t)) = \Phi(K_i(t))$, and we compute (for simplicity we drop the
$\ve$ dependencies and the $\s_0$ dependency of $K_i$)
\begin{eqnarray*}
&&\D \bigl( \Phi\bigl(e^{-\mu_t}\bigr) \Phi^{(\delta)}
\bigl(K_i(t)\bigr) \bigr)
\\
&&\qquad= \D \bigl( A (t-\tau_i + \s_i)^{-{m}/{(m-1)}}
\bigl(R^2-|\xi|^2\bigr) \Phi \bigl(e^{\mu_{\tau_i}-\mu_t}\bigr)
\bigr)
\\
&&\qquad= A (t- \tau_i + \s_i)^{-{m}/{(m-1)}} \Phi
\bigl(e^{\mu_{\tau_i}-\mu_t}\bigr)
\\
&&\quad\qquad{} \times\bigl( -2d - 4m \xi\cdot\nabla(\mu_{\tau_i}-\mu_t) +
\bigl(R^2-|\xi|^2\bigr)\\
&&\hspace*{56pt}{}\times \bigl(m^2 \bigl|\nabla(
\mu_{\tau_i}-\mu_t)\bigr|^2 + m \D(\mu _{\tau
_i}-
\mu_t)\bigr) \bigr)
\end{eqnarray*}
and
\begin{eqnarray*}
\partial_t K_i(t) &= - \frac{A^{{1}/{m}}}{m-1} (t-
\tau_i + \s_i)^{-
{m}/{(m-1)}}\bigl(R^2-|
\xi|^2\bigr)^{{1}/ {m}} e^{\mu_{\tau_i}}.
\end{eqnarray*}
In order to show that $K_i(t)$ is a supersolution to \eqref
{eqnapproxforsuper} on $[\tau_i,\tau_{i+1}]$, we thus have to show
\begin{eqnarray*}
0& \le&\partial_t K_i(t) - e^{\mu_t} \D \bigl(
\Phi\bigl(e^{-\mu_t}\bigr) \Phi ^{(\delta
)}\bigl(K_i(t)
\bigr) \bigr)
\\
&=& - \frac{A^{{1}/{m}}}{m-1} (t- \tau_i + \s_i)^{-
{m}/{(m-1)}}
\bigl(R^2-|\xi|^2\bigr)^{{1}/ {m}}
e^{\mu_{\tau_i}} \\
&&{}- A (t- \tau_i + \s _i)^{-{m}/{(m-1)}}
e^{\mu_t}\Phi\bigl(e^{\mu_{\tau_i}-\mu_t}\bigr)
\\
&&\quad{}\times \bigl( -2d - 4m \xi\cdot\nabla(\mu_{\tau_i}-\mu_t) +
\bigl(R^2-|\xi|^2\bigr) \\
&&\quad\hspace*{14pt}{}\times \bigl( m^2 \bigl|\nabla(
\mu_{\tau_i}-\mu_t)\bigr|^2 + m \D (\mu
_{\tau_i}-\mu_t) \bigr) \bigr)
\end{eqnarray*}
for all $t \in[\tau_i,\tau_{i+1}]$. Equivalently,
\begin{eqnarray*}
\frac{ (R^2-|\xi|^2)^{{1}/{m}}}{m-1} &\le &A^{{(m-1)}/{m}} e^{\mu_t-\mu_{\tau_i}}\Phi
\bigl(e^{\mu_{\tau
_i}-\mu
_t}\bigr) \\
&&{}\times\bigl( 2d + 4m \xi\cdot\nabla(\mu_{\tau_i}-
\mu_t)- \bigl(R^2-|\xi|^2\bigr) \\
&&\hspace*{9pt}{}\times\bigl( m^2 \bigl|
\nabla(\mu_{\tau_i}-\mu_t)\bigr|^2 + m \D(
\mu_{\tau_i}-\mu_t) \bigr) \bigr).
\end{eqnarray*}
It is thus sufficient to show
\begin{eqnarray*}
\frac{ R^{{2}/{m} }}{m-1} &\le& A^{{(m-1)}/{m}} \Bigl( \mathop{\inf
_{\xi\in\bar\mcO}}_{ t
\in[\tau_i,\tau_{i+1}]} e^{\mu_t-\mu_{\tau_i}}\Phi
\bigl(e^{\mu_{\tau
_i}-\mu
_t}\bigr) \Bigr)\\
&&{}\times \bigl( 2d - 4m R \bigl\| \nabla(
\mu_{\tau_i}-\mu_t)\bigr\| _{L^\infty
(\mcO)}
 \\
 &&\hspace*{15pt}{}- R^2 \bigl( m^2 \bigl\|\nabla(\mu_{\tau_i}-
\mu_t)\bigr\| _{L^\infty
(\mcO)}^2 + m \bigl\|\D(\mu_{\tau_i}-
\mu_t)\bigr\|_{L^\infty(\mcO)} \bigr) \bigr)
\end{eqnarray*}
for all $t \in[\tau_i,\tau_{i+1}]$, which is satisfied by the choice
of $A$ and $\tau_i$ in \eqref{eqnchoicetau}. In conclusion,
$K_i^{(\s
_0,\ve)}(t)$ is a supersolution to \eqref{eqnapproxforsuper} on
$[\tau_i,\tau_{i+1}]$ for each $\delta\le\delta_0$. We define
%
%e3.13 #&#
\begin{equation}
\label{eqndefKs} K^{(\s_0,\ve)}(t):= \sum_{i=0}^{L-1}
\mathbh{1}_{[\tau_i,\tau
_{i+1})}(t) K^{(\s_0,\ve)}_i(t).
\end{equation}
Since the comparison principle \cite{L96}, Theorem 9.7, applies on each
interval $[\tau_i,\tau_{i+1}]$, by induction we have
\[
Y^{(\delta,\ve)}(t,\xi) \le K^{(\s_0,\ve)}(t,\xi) \qquad\forall t \in [0,T], \xi\in
\mcO, \delta\le\delta_0.
\]
The upper bound in \eqref{eqnKbounds} yields a uniform bound $M$ for
$K^{(\s_0,\ve)}$. $M$ depends on $\s_0$, $\sup_{\ve}\|z^{(\ve)}\|
_{L^\infty(\mcO)}$ and via the bound of the partition size $\gamma$ and
the definition of $\s_i$, on the uniform modulus of continuity of $\{
z^{(\ve)}\}$.
\end{pf}

%s3.3.3 #&#
\subsubsection{Existence of weak solutions}\label{ssecweaksoln}
We will now take the limit $\delta\to0$ in \eqref{eqnapprox} in order
to obtain weak solutions to \eqref{eqnroughPDEtransformed} in the
sense of Definition \ref{defweaksoln}.

%le3.3 #&#
\begin{lemma}\label{lemmaapriori-subgradient}
Let $Y_0 \in L^\infty(\mcO)$, $\{z^{(\ve)} \in C^\infty([0,T];\R
^N)| \ve>0\} \subseteq C([0,T];\break  \R^N)$ be compact and $Y^{(\delta,\ve)}$
be a
classical solution to \eqref{eqnapprox} driven by $z^{(\ve)}$. Then
%
%e3.14 #&#
\begin{eqnarray}
\label{eqnapriorifordapprox}\qquad&& \sup_{t \in[0,T]} \bigl(
\bigl\|Y^{(\delta,\ve)}_t\bigr\|_{m+1}^{m+1} + \bigl\|
Y_t^{(\delta
,\ve)}\bigr\|_H^2 \bigr)
+C_1 \bigl\|\nabla \bigl( \Phi\bigl(e^{-\mu^{(\ve)}}\bigr)
\Phi^\delta \bigl(Y^{(\delta,\ve)}\bigr) \bigr)\bigr \|_{L^2(\mcO_T)}
\nonumber
\\[-8pt]
\\[-8pt]
\nonumber
&&\qquad \le
C_2
\end{eqnarray}
for all $\ve>0$, $\delta\le\delta_0$ (with $\delta_0$ from Lemma
\ref
{lemmaL-infty-boundclassicalsolns}) and for some constants $0 < C_1,
C_2$ independent of $\delta$ and $\ve$. $C_2$ may depend on $\|Y_0\|
_{L^\infty(\mcO)}$, the uniform bound and the uniform modulus of
continuity of $\{z^{(\ve)}\}$.
\end{lemma}
\begin{pf}
Let $\Psi^{(\delta)} \in C^1(\R)$ so that $\dot\Psi^{(\delta)} =
\Phi^{(\delta
)}$. We compute
%
%e3.15 #&#
\begin{eqnarray}
\label{eqnsubgradienttest} &&\hspace*{-4pt}\partial_t \int_\mcO
\Psi^{(\delta)}\bigl(Y^{(\delta,\ve)}_t\bigr) \,d\xi
\nonumber
\\
&&\hspace*{-4pt}\qquad =
\int_\mcO\frac{\Phi(e^{-\mu^{(\ve)}_t})}{\Phi(e^{-\mu^{(\ve
)}_t})} \Phi^{(\delta)}
\bigl(Y^{(\delta,\ve)}_t\bigr) \partial_t Y^{(\delta
,\ve)}
\,d\xi
\nonumber
\\
&&\hspace*{-4pt}\qquad = - \int_\mcO\frac{e^{\mu^{(\ve)}_t}}{\Phi(e^{-\mu^{(\ve)}_t})} \nabla \bigl( \Phi
\bigl(e^{-\mu^{(\ve)}_t}\bigr) \Phi^{(\delta)}\bigl(Y^{(\delta
,\ve)}_t
\bigr) \bigr) \nabla \bigl(\Phi\bigl(e^{-\mu^{(\ve)}_t}\bigr) \Phi^{(\delta
)}
\bigl(Y^{(\delta,\ve
)}_t\bigr) \bigr) \,d\xi
\nonumber
\\[-8pt]
\\[-8pt]
\nonumber
&&\hspace*{-4pt}\qquad\quad{} - \int_\mcO\Phi\bigl(e^{-\mu^{(\ve)}_t}\bigr)
\Phi^{(\delta
)}\bigl(Y^{(\delta,\ve
)}_t\bigr) \nabla \biggl(
\frac{e^{\mu^{(\ve)}_t}}{\Phi(e^{-\mu^{(\ve
)}_t})} \biggr) \nabla \bigl( \Phi\bigl(e^{-\mu^{(\ve)}_t}\bigr) \Phi
^{(\delta)}\bigl(Y^{(\delta
,\ve)}_t\bigr) \bigr) \,d\xi
\\
&&\hspace*{-4pt}\qquad\le\sup_{(t,\xi) \in\bar\mcO_T} \biggl(\ve_1 \biggl\llvert \nabla
\frac
{e^{\mu^{(\ve)}_t}}{\Phi(e^{-\mu^{(\ve)}_t})}\biggr\rrvert ^2 -\frac
{e^{\mu
^{(\ve)}_t}}{\Phi(e^{-\mu^{(\ve)}_t})} \biggr) \int
_\mcO\bigl|\nabla \Phi \bigl(e^{-\mu^{(\ve)}_t}\bigr)
\Phi^{(\delta)}\bigl(Y^{(\delta,\ve)}_t\bigr)\bigr|^2 \,d\xi
\nonumber
\\
&&\hspace*{-4pt}\qquad\quad{} + C_{\ve_1} \int_\mcO \bigl( \Phi
\bigl(e^{-\mu^{(\ve
)}_t}\bigr) \Phi ^{(\delta)}\bigl(Y^{(\delta,\ve)}_t
\bigr) \bigr)^2 \,d\xi
\nonumber
\end{eqnarray}
for all $\ve_1 > 0$ and some $C_{\ve_1} > 0$. Choosing $\ve_1$ small
enough and using the uniform $L^\infty$ bound derived in Lemma \ref
{lemmaL-infty-boundclassicalsolns}, we conclude
\begin{eqnarray*}
&&\sup_{t \in[0,T]} \int_\mcO\Psi^{(\delta)}
\bigl(Y^{(\delta,\ve
)}_t\bigr) \,d\xi+ C_1 \int
_{\mcO_T} \bigl|\nabla\Phi\bigl(e^{-\mu^{(\ve)}_r}\bigr)
\Phi^{(\delta
)}\bigl(Y^{(\delta,\ve
)}_r\bigr)\bigr|^2 \,d\xi
\,dr
\\
&&\qquad\le\int_\mcO\Psi^{(\delta)}(Y_0) \,d\xi+
C_2
\end{eqnarray*}
for all $\delta\le\delta_0$ and for some constants $C_1, C_2 > 0$ independent
of $\delta$ and $\ve$, where $C_2$ may depend on $\|Y_0\|_{L^\infty
(\mcO
)}$, the uniform bound and the uniform modulus of continuity of $\{
z^{(\ve)}\}$.

It remains to prove the bound of $\|Y^{(\delta)}\|_H^2$. By the chain rule
we have
\[
\frac{d}{dt} \bigl\|Y_t^{(\delta,\ve)}\bigr\|_H^2
= 2 \int_\mcO(-\D)^{-1} \bigl(Y_t^{(\delta,\ve)}
\bigr) e^{\mu
^{(\ve
)}_t}\D \bigl(\Phi\bigl(e^{-\mu^{(\ve)}_t}\bigr)
\Phi^{(\delta)}\bigl(Y^{(\delta
,\ve)}_t\bigr) \bigr) \,d\xi.
\]
Since for $f,g,h$ sufficiently smooth and $h_{|\partial\mcO} = 0$ we have
\begin{eqnarray*}
\int_\mcO fg \D h \,d\xi= \int_\mcO
\bigl( f \D(gh) + 2 h \nabla f \cdot\nabla g + fh \D(g) \bigr) \,d\xi.
\end{eqnarray*}
We obtain
%
%e3.16 #&#
\begin{eqnarray}
\label{eqnapriorifordapproxH} &&\frac{d}{dt} \bigl\|Y_t^{(\delta,\ve)}
\bigr\|_H^2
\nonumber
\\
&&\qquad= - 2 \int_\mcO Y_t^{(\delta,\ve)}
e^{\mu^{(\ve)}_t}\Phi \bigl(e^{-{\mu^{(\ve
)}_t}}\bigr)\Phi^{(\delta)}
\bigl(Y^{(\delta,\ve)}_t\bigr) \,d\xi
\nonumber
\\
&&\qquad\quad{}+ 2\int_\mcO\Phi\bigl(e^{-\mu^{(\ve)}_t}\bigr)
\Phi^{(\delta
)}\bigl(Y^{(\delta,\ve
)}_t\bigr) \bigl(2\nabla
\bigl((-\D)^{-1}\bigl(Y_t^{(\delta,\ve)}\bigr) \bigr) \cdot
\nabla \bigl( e^{\mu^{(\ve)}_t} \bigr) \bigr) \,d\xi
\\
&&\qquad\quad{} + 2\int_\mcO\Phi\bigl(e^{-\mu^{(\ve)}_t}\bigr)
\Phi^{(\delta
)}\bigl(Y^{(\delta,\ve
)}_t\bigr) \bigl((-
\D)^{-1}\bigl(Y_t^{(\delta,\ve)}\bigr) \D
\bigl(e^{\mu^{(\ve
)}_t} \bigr) \bigr) \,d\xi\nonumber
\\
&&\qquad\le C \bigl(1+ \bigl\|Y_t^{(\delta,\ve)}\bigr\|_H^2
\bigr)\qquad\delta\le \delta_0,
\nonumber
\end{eqnarray}
where $0 < C $ is a constant independent of $\delta$, $\ve$, possibly
depending on\break  $\|Y_0\|_{L^\infty(\mcO)}$, the uniform bound and the
uniform modulus of continuity of $\{z^{(\ve)}\}$. Gronwall's inequality
then yields the bound.
\end{pf}

\begin{pf*}{Proof of Theorem \ref{thmexistenceweak}}
We approximate the initial condition $Y_0$ by smooth functions
$Y_0^{(\delta)} \in C^2(\bar\mcO)$ such that $Y_0^{(\delta)} \to
Y_0$ almost
everywhere and $\|Y_0^{(\delta)}\|_{L^\infty(\mcO)} \le\|Y_0\|
_{L^\infty
(\mcO)}$. The continuous driving signal $z$ is approximated by smooth
signals $z^{(\delta)} \in C^\infty([0,T];\R^N)$ such that
$z^{(\delta)} \to z$
in $C([0,T];\R^N)$. In particular $\{z^{(\delta)}| \delta> 0\}$ is
a compact
set in $C([0,T];\R^N)$. Let $Y^{(\delta)}$ be classical solutions to
\eqref
{eqnapprox} with initial condition $Y_0^{(\delta)}$ and driving signal
$z^{(\delta)}$. In the following let $\delta\le\delta_0$ with
$\delta_0$ as in Lemma
\ref{lemmaapriori-subgradient}.

By Lemma \ref{lemmaapriori-subgradient}, $Y^{(\delta)}$ is uniformly
bounded in $L^\infty([0,T];L^{m+1}(\mcO))$ and in $L^\infty([0,T];H)$.
By Sobolev embedding, for $k \ge\frac{n}{2} (\frac{1-m}{1+m}) \vee1$
we have $H^k_0(\mcO) \hookrightarrow L^{{(m+1)}/{m}}(\mcO)$.
Consequently, $L^{m+1}(\mcO) \hookrightarrow H^{-k}:= (H^k_0(\mcO))^*$
and $H \hookrightarrow H^{-k}$. Hence, weak$^*$ limits obtained in
$L^\infty([0,T];L^{m+1}(\mcO))$ and $L^\infty([0,T];H)$ coincide.

Moreover, $\Phi(e^{-\mu^{(\delta)}_t})\Phi^{(\delta)}(Y^{(\delta
)})$ is uniformly
bounded in $L^2([0,T];H_0^1(\mcO))$ and boundedness of $Y^{(\delta)}$ in
$L^\infty([0,T];L^{m+1}(\mcO))$ implies boundedness of $\Phi(e^{-\mu
^{(\delta)}_t})\Phi^{(\delta)}(Y^{(\delta)})$ in $L^\infty
([0,T];L^{
{(m+1)}/{m}}(\mcO))$.\vadjust{\goodbreak}

Hence, we can choose a subsequence (again denoted by $\delta$) such that
\begin{eqnarray*}
Y^{(\delta)} &\rightharpoonup^*& Y\qquad \mbox{in } L^\infty
\bigl([0,T];L^{m+1}(\mcO )\bigr)
\mbox{ and in } L^\infty\bigl([0,T];H\bigr),
\\
Z^{(\delta)} &:=& \Phi\bigl(e^{-\mu^{(\delta)}_t}\bigr)\Phi^{(\delta
)}
\bigl(Y^{(\delta)}\bigr) \rightharpoonup Z \qquad\mbox{in } L^2
\bigl([0,T];H_0^1(\mcO)\bigr),
\\
Z^{(\delta)} &\rightharpoonup^* &Z \qquad\mbox{in } L^\infty
\bigl([0,T];L^{{(m+1)}/{m}}(\mcO)\bigr).
\end{eqnarray*}
Since
\begin{eqnarray*}
\label{eqnapproxweaksoln} && -\int_{\mcO_T} Y_r^{(\delta)}
\partial_r \eta_r \,d\xi \,dr - \int_\mcO
Y^{(\delta)}_0 \eta_0 \,d\xi
\\
&&\qquad= - \int_{\mcO_T} \nabla \bigl(\Phi\bigl(e^{-\mu^{(\delta)}_r}
\bigr) \Phi ^{(\delta
)}\bigl(Y_r^{(\delta)}\bigr) \bigr)
\nabla \bigl(e^{\mu^{(\delta)}_r}\eta _r \bigr) \,d\xi \,dr,
\end{eqnarray*}
we obtain
\[
\label{eqnlimitweaksoln} -\int_{\mcO_T} Y_r
\partial_r \eta \,d\xi \,dr - \int_\mcO
Y_0 \eta _0 \,d\xi= - \int_{\mcO_T}
\nabla Z_r \nabla \bigl( e^{\mu_r} \eta_r \bigr)
\,d\xi \,dr
\]
for all $\eta\in C^{1}(\bar\mcO_T)$ with $\eta= 0$ on $\mcP\mcO_T$.

First we will prove that $Y^{(\delta)}_t \rightharpoonup Y_t$ in $H$, for
all $t \in[0,T]$. We consider the set $\mcK= \{(Y^{(\delta)},h)_H| h \in
H, \|h\|_H \le1, \delta> 0\} \subseteq C([0,T])$. By Lemma \ref
{lemmaapriori-subgradient}, $\mcK$ is bounded in $C([0,T])$. Moreover,
\begin{eqnarray*}
\bigl(Y^{(\delta)}_{t+s}-Y^{(\delta)}_t,h
\bigr)_H &=& \int_t^{t+s} \biggl(
\frac{dY^{(\delta)}}{dr},h \biggr)_H \,dr \le\|h\|_H
s^{{1}/ {2}} \biggl\llVert \frac{dY^{(\delta)}}{dr} \biggr\rrVert
_{L^2([0,T];H)}
\\
&\le& C \|h\|_H s^{{1}/ {2}}.
\end{eqnarray*}
Hence, $\mcK$ is a set of equibounded, equicontinuous functions and
thus is relatively compact in $C([0,T])$. For every $h \in H, \|h\|_H
\le1 $ there is a subsequence (again denoted by $\delta$) such that
$(Y^{(\delta)},h)_H \to g$ in $C([0,T])$. Since also $Y^{(\delta)}
\rightharpoonup Y$ in $L^2([0,T];H)$ (thus $(Y^{(\delta)},h)_H
\rightharpoonup(Y,h)_H$ in $L^2([0,T])$) we have $g = (Y,h)$ which
implies $Y^{(\delta)}_t \rightharpoonup Y_t$ in $H$ for all $t\in[0,T]$.

We need to prove $Z = \Phi(e^{-\mu} Y)$ almost everywhere. This will
be done by considering the equation on $H = (H_0^1(\mcO))^*$. Since
$Y^{(\delta)}$ solves \eqref{eqnapprox}, we conclude that
\begin{eqnarray*}
\frac{dY^{(\delta)}}{dt} \rightharpoonup\frac{dY}{dt}\qquad \mbox{in }
L^2\bigl([0,T];H\bigr)
\end{eqnarray*}
and
\begin{eqnarray*}
\frac{dY}{dt} &=& e^{\mu_t} \D Z \qquad\mbox{for a.e. } t \in[0,T],
\\
Y(0) &=& Y_0.
\end{eqnarray*}
In particular, since also $Y \in L^\infty([0,T];H)$ we have $Y \in
C([0,T];H)$. By the chain rule we obtain
%
%e3.17 #&#
\begin{eqnarray}
\label{eqnlimitIto} \|Y_t\|_H^2& = &
\|Y_0\|_H^2 - 2 \int_0^t
\int_{\mcO} e^{\mu_r} Z_r Y_r
\,d\xi \,dr
\nonumber
\\
&&{} + 2 \int_0^t \int_{\mcO}
Z_r \bigl( 2 \nabla\bigl(e^{\mu_r}\bigr) \nabla \bigl((-\D
)^{-1}(Y_r)\bigr)\\
&&\hspace*{73pt}{}+ \D\bigl(e^{\mu_r}\bigr) (-
\D)^{-1}(Y_r) \bigr) \,d\xi \,dr.\nonumber
\end{eqnarray}
Applying the chain rule to \eqref{eqnapprox} yields
%
%e3.18 #&#
\begin{eqnarray}
\label{eqnItoapprox} \bigl\|Y^{(\delta)}_t\bigr\|_H^2
&=& \bigl\|Y^{(\delta)}_0\bigr\|_H^2 - 2 \int
_0^t \int_{\mcO}
e^{\mu^{(\delta)}_r} Z_r^{(\delta)} Y_r^{(\delta)}
\,d\xi \,dr
\nonumber
\\
&&{}+ 2 \int_0^t \int_{\mcO}
Z_r^{(\delta)} \bigl( 2 \nabla\bigl(e^{\mu
^{(\delta)}_r}\bigr)
\nabla\bigl((-\D)^{-1}\bigl(Y_r^{(\delta)}\bigr)\bigr) \\
&&\hspace*{81pt}{}+ \D\bigl(e^{\mu^{(\delta)}_r}\bigr) (-\D)^{-1}\bigl(Y_r^{(\delta)}
\bigr) \bigr) \,d\xi \,dr.\nonumber
\end{eqnarray}
Since $(-\D)^{-1}(Y^{(\delta)}) \in L^2([0,T];H_0^1(\mcO))$ and
\[
\frac{d(-\D)^{-1}(Y^{(\delta)})}{dt} \in L^2\bigl([0,T];H_0^1(
\mcO)\bigr) \subseteq L^2\bigl([0,T];L^2(\mcO)\bigr)
\]
are uniformly bounded and $H_0^1(\mcO) \hookrightarrow\hookrightarrow
L^2(\mcO) $, by the Aubin--Lions compactness theorem we have (for a
subsequence again denoted by $\delta$)
\[
(-\D)^{-1}\bigl(Y^{(\delta)}\bigr) \to(-\D)^{-1}(Y)\qquad
\mbox{strongly in } L^2\bigl([0,T];L^2(\mcO)\bigr).
\]
Note that also $Z^{(\delta)} \rightharpoonup Z$ in
$L^2([0,T];H_0^1(\mcO
))$. Taking the limit $\delta\to0$ in \eqref{eqnItoapprox} yields
\begin{eqnarray*}
\|Y_t\|_H^2 &\le& \|Y_0
\|_H^2 - \limsup_{\delta\to0} 2 \int
_0^t \int_{\mcO}
e^{\mu
^{(\delta)}_r} Z_r^{(\delta)} Y_r^{(\delta)}
\,d\xi \,dr
\\
&&{}+ 2 \int_0^t \int_{\mcO}
Z_r \bigl( 2 \nabla\bigl(e^{\mu_r}\bigr) \nabla \bigl((-\D
)^{-1}(Y_r)\bigr) + \D\bigl(e^{\mu_r}\bigr) (-
\D)^{-1}(Y_r) \bigr) \,d\xi \,dr.
\nonumber
\end{eqnarray*}

Substracting \eqref{eqnlimitIto} we arrive at
%
%e3.19 #&#
\begin{equation}
\label{eqnmonotonicitytrick} \limsup_{\delta\to0} \int
_{\mcO_T} e^{\mu^{(\delta)}_r} Z_r^{(\delta)}
Y_r^{(\delta
)}\,d\xi \,dr \le\int_{\mcO_T}
e^{\mu_r} Z_r Y_r \,d\xi \,dr.
\end{equation}

By monotonicity of $\Phi^{(\delta)}$ we have
\[
\int_{\mcO_T} e^{\mu^{(\delta)}_r}\Phi\bigl(e^{-\mu^{(\delta)}_r}
\bigr) \bigl(\Phi^{(\delta
)}\bigl(Y_r^{(\delta)}\bigr) -
\Phi^{(\delta)}(z_r)\bigr) \bigl(Y_r^{(\delta)}-z_r
\bigr) \,d\xi \,dr \ge0
\]
for all $z \in C^1(\bar\mcO_T)$. Using \eqref{eqnmonotonicitytrick}
we can take $\delta\to0$ to obtain
\[
\int_{\mcO_T} e^{\mu_r} \bigl(Z_r - \Phi
\bigl(e^{-\mu_r}\bigr)\Phi (z_r)\bigr) (Y_r-z_r)
\,d\xi \,dr \ge0
\]
for all $z \in C^1(\bar\mcO_T)$, hence by approximation for all $z
\in
L^{m+1}(\mcO_T)$. Taking $z=Y-\ve h$ with $h\in C^0(\bar\mcO_T)$,
dividing by $\ve$ and letting $\ve\to0$ yields
\[
\int_{\mcO_T} e^{\mu_r} \bigl(Z_r - \Phi
\bigl(e^{-\mu_r}\bigr)\Phi(Y_r)\bigr)h \,d\xi \,dr \ge0
\]
for all $h\in C^0(\mcO_T)$. This implies $Z = \Phi(e^{-\mu})\Phi(Y)$
almost everywhere.

It remains to prove that the uniform $L^\infty$ bound obtained in
Lemma \ref{lemmaL-infty-boundclassicalsolns} remains valid for weak
solutions. We first note that by uniform continuity of $\{z^{(\delta
)}| \delta
> 0\}$ the partition $\tau_i$ in \eqref{eqnchoicetau} can be chosen
independently of $\delta$. Thus $K_i^{(\s_0,\delta)}$ defined in~\eqref
{eqndefK} only depends on $\delta$ via the factor $e^{\mu^{(\delta
)}_{\tau
_i}}$ and converges uniformly to a piecewise smooth function $K_i^{(\s
_0)}$ given by \eqref{eqndefK} with $\mu^{(\ve)} = \mu$. We define
$K^{(\s_0)}$ as in \eqref{eqndefKs}. By Lemma \ref
{lemmaL-infty-boundclassicalsolns} we know that $Y^{(\delta)}_t \le
K^{(\delta,\s_0)}(t)$ for all $t \in[0,T]$ and all $\delta\le
\delta_0$. Since
the cone of nonnegative distributions in $H$ is convex, closed and
$Y^{(\delta)}_t \rightharpoonup Y_t$ in $H$ we conclude $Y_t \le
K^{(\s
_0)}(t)$ a.e. in $\mcO$ for all $t \in[0,T]$. Note that $K^{(\s_0)}$
is increasing as $\s_0$ decreases. Defining $U:= K^{(0)}\dvtx[0,T]
\to
\bar\R$ as in \eqref{eqndefK} with $\s_0 = 0$ (with the convention
$\frac{1}{0} = \infty$) yields a piecewise smooth function on $(0,T]$
(taking the value $\infty$ at $t=0$) with $Y_t \le K^{(\s_0)}(t) \le
U_t$ a.e. in $\mcO$ and for all $t\in[0,T]$.

For later use we prove weak continuity of $t \mapsto Y_t$ in $L^p(\mcO
)$. Let $p \in(2,\infty)$ and $t_n \to t \in[0,T]$. Then $Y_t$ is
uniformly bounded in $L^p(\mcO)$ and thus there is a weakly convergence
subsequence $Y_{t_{n_k}}$. Since $Y \in C([0,T];H)$, the weak limit is
$Y_t$ and by arbitrarity of the sequence $t_n$ we obtain $Y_{t_n}
\rightharpoonup Y_t$ in $L^p(\mcO)$.

Assume that $z \in C^{1-\operatorname{var}}([0,T];\R^N)$, by Theorem \ref
{thmtransformationbddvarn} $X = e^{-\mu}Y$ is a weak solution to
\eqref{eqnroughPDE}, and the bounds follow from the corresponding
ones for $Y$.
\end{pf*}

\begin{pf*}{Proof of Remark \ref{rmkfde}}
The proof of existence of weak solutions to \eqref{eqnroughPDE} and
\eqref{eqnroughPDEtransformed} proceeds with only minor
modifications for the case of $0<m<1$. The statements of Lemma \ref
{lemmaL-infty-boundclassicalsolns} remain true, however, with a
modified upper bound $K^{(\s_0,\ve)}$.
\begin{pf*}{Proof of Lemma \ref{lemmaL-infty-boundclassicalsolns} for
fast diffusion equations}
Again we construct a supersolution to \eqref
{eqnapproxforsuper} which is piecewise smooth (thus bounded) in
$\bar
\mcO_T$. Let $R, \b, C_4$ and $\tau_i$, $i=0,\ldots,L-1$ as before and
$A^{{(m-1)}/{m}} = \frac{R^{{2}/{m}}}{(1-m)d}$. We inductively define
\begin{eqnarray*}
K_i^{(\s_0,\ve)}(t,\xi) = A^{{1}/ {m} }(
\s_i-t)^{
{1}/{(1-m)}}\bigl(R^2-|\xi|^2
\bigr)^{{1}/ {m}} e^{\mu^{(\ve)}_{\tau_i}},\hspace*{-2pt}\qquad t \in[\tau_i,
\tau_{i+1}], \xi\in\mcO,
\end{eqnarray*}
where $\s_i > \tau_{i+1}$, $i=1,\ldots,L-1$ are chosen (large enough) such
that\break  $K_0^{(\s_0,\ve)}(0) \ge Y_0$ and $K_i(\tau_{i+1}) \le
K_{i+1}(\tau
_{i+1})$, which is satisfied if $\s_{i+1} \ge2\s_i + \tau_{i+1}$. The
remaining calculations and arguments are similar to those of the
degenerate case. Note, however, the changing signs due to the changing
sign of $1-m$.
\end{pf*}

We now return to the proof of Remark \ref{rmkfde}. We continue by
proving a priori estimates for the approximating classical solutions
analogous to those given in Lemma \ref{lemmaapriori-subgradient}. Here
we can allow $Y_0 \in L^{m+1}(\mcO)$ since in \eqref
{eqnsubgradienttest} and \eqref{eqnapriorifordapproxH} the term
$\int_\mcO ( \Phi(e^{-\mu_t}) \Phi^{(\delta)}(Y^{(\delta)})
)^2 \,d\xi
$ can be bounded by $C\int_\mcO\Psi^{(\delta)}(Y^{(\delta)}) \,d\xi
$. Thus, the
$L^\infty$ bound is not needed to prove \eqref
{eqnapriorifordapprox}. The same proof as for Theorem \ref
{thmexistenceweak} can then be used to construct weak solutions for
all initial conditions $Y_0 \in L^{m+1}(\mcO)$ [but without $L^\infty
(\mcO)$ bound]. This finishes the proof of existence of weak solutions
for the case of fast diffusions. If $Y_0 \in L^\infty(\mcO)$, then
Lemma \ref{lemmaL-infty-boundclassicalsolns} yields $L^\infty$
boundedness of $Y$.

In order to obtain a uniform upper bound independent of the initial
condition as in the degenerate case ($m > 1$), we would have to let $\s
_0 \to\infty$ in $K^{(\s_0)}$ implying $U \equiv\infty$. Moreover, we
do not have a uniqueness result for essentially bounded weak solutions
in the case of fast diffusion equations. Therefore, it is not known
whether each such weak solution is a limit of solutions to the
nondegenerate approximating equations which will be needed for the
proof of uniform continuity in the initial condition with respect to
the $L^1$ norm.
\end{pf*}

%
%%%-------------------------------------------------------------------------------------------------------
% ------------------------------- Rough Solutions
%------------------------------------------
%
%%%-------------------------------------------------------------------------------------------------------

%s3.4 #&#
\subsection{Rough weak solutions}\label{ssecroughsolns}
We prove Theorem \ref{thmroughPDE}. Let $Y_0 \in L^\infty(\mcO)$ and
$z^{(\ve)} \in C^{1-\operatorname{var}}([0,T];\R^N)$ such that $z^{(\ve)} \to z$ in
$C([0,T];\R^N)$. In particular $\{z^{(\ve)}| \ve>0\}$ is compact in
$C([0,T];\R^N)$. We require uniform bounds for the corresponding weak
solutions $Y^{(\ve)}$ to \eqref{eqnroughPDEtransformed} driven by
$z^{(\ve)}$.
%
%le3.4 #&#
\begin{lemma}\label{lemmaaprioriuniform}
Let $\{z^{(\ve)}| \ve>0\} \subseteq C([0,T];\R^N)$ compact and
$Y^{(\ve)}$ the weak solutions to \eqref{eqnroughPDEtransformed}
driven by $z^{(\ve)}$. Then there exists a constant $M >0$ (independent
of $\ve$) such that
\[
\sup_{t \in[0,T]} \bigl\|Y_t^{(\ve)}\bigr\|_{L^\infty(\mcO)}
+ \bigl\|\Phi \bigl(e^{-\mu
^{(\ve)}}Y^{(\ve)}\bigr)\bigr\|_{L^2([0,T];H_0^1(\mcO))}^2
\le M.
\]
\end{lemma}
\begin{pf}
For $\ve> 0$ let $\{z^{(\tau,\ve)} \in C^\infty([0,T];\R^N)| \tau>
0\}$ be the sequence of smooth functions obtained by convolution of
$z^{(\ve)}$ with a standard Dirac sequence. Since $\{z^{(\ve)}| \ve
>0\}$ is a set of equicontinuous functions, there is a uniform modulus
of continuity $\omega\dvtx\R_+ \to\R_+$. Uniform boundedness and the modulus
of continuity are preserved under convolution with a Dirac sequence.
Thus, the set $\{z^{(\tau,\ve)}| \ve>0, \tau> 0\}$ is compact in
$C([0,T];\R^N)$.

Let now $Y_0^{(\delta)}$ be a smooth approximation of $Y_0$ as in the
proof of Theorem~\ref{thmexistenceweak}, and let $Y^{(\delta,\ve
)}$ be
the corresponding smooth solution to \eqref{eqnapprox} driven by
$z^{(\delta,\ve)}$.
By Lemmas \ref{lemmaL-infty-boundclassicalsolns} and~\ref
{lemmaapriori-subgradient} there is a uniform constant $M>0$
(depending only on $\|Y_0\|_{L^\infty(\mcO)}$) such that\vspace*{1pt}
\[
\bigl\|Y^{(\delta,\ve)}\bigr\|_{L^\infty(\mcO)}+\bigl\|\Phi\bigl(e^{-\mu^{(\delta
,\ve)}}\bigr)
\Phi^{(\delta
)}\bigl(Y^{(\delta,\ve)}\bigr)\bigr\|_{L^2([0,T];H_0^1(\mcO))}^2\le
M.\vspace*{1pt}
\]
By weak lower semicontinuity of the $L^\infty$ norm on $L^{m+1}$, the
convergence $Y^{(\delta,\ve)} \rightharpoonup^* Y$ in $L^\infty
([0,T];L^{m+1}(\mcO))$ and the convergence\vspace*{1pt}
\[
\Phi\bigl(e^{-\mu^{(\delta,\ve)}}\bigr)\Phi^{(\delta)}\bigl(Y^{(\delta,\ve)}\bigr)
\rightharpoonup\Phi \bigl(e^{-\mu^{(\ve)}}Y^{(\ve)}\bigr)\qquad \mbox{in }
L^2\bigl([0,T];H_0^1(\mcO)\bigr)\vspace*{1pt}
\]
obtained in the proof of Theorem \ref{thmexistenceweak}, these bounds continue to
hold for~$Y^{(\ve)}$.
\end{pf}

By Theorem \ref{thmexistenceweak} there is a weak solution $Y$ to
\eqref{eqnroughPDEtransformed} driven by $z$. Let $X:= e^{-\mu}Y$
and $X^{(\ve)}:= e^{-\mu^{(\ve)}}Y^{(\ve)}$. Then $X^{(\ve)}$ solves
\eqref{eqnroughPDE}, and we need to prove $X^{(\ve)}_t \to X_t$ in
$H$ for all $t \in[0,T]$. For this it is enough to prove $Y^{(\ve)}_t
\to Y_t$ in $H$ for all $t \in[0,T]$.

Lemma \ref{lemmaaprioriuniform} implies that $Y^{(\ve)}$ is
uniformly bounded in $L^\infty(\mcO_T)$, hence also in $L^\infty
([0,T];H)$. Moreover, $Z^{(\ve)} = \Phi(e^{-\mu^{(\ve)}}Y^{(\ve
)})$ is
uniformly bounded in $L^\infty(\mcO_T)$ and in $L^2([0,T];H_0^1(\mcO
))$. By the same argument as in Theorem \ref{thmexistenceweak} we
obtain the weak convergence $Y^{(\ve)}_t \rightharpoonup Y_t$ in $H$
for all $t \in[0,T]$ and $Z^{(\ve)} \rightharpoonup Z = \Phi(e^{-\mu}
Y)$ in $L^2([0,T];H_0^1(\mcO))$. Hence, $X^{(\ve)}_t \rightharpoonup
X_t:= e^{-\mu_t}Y_t$ in $H$ for all $t \in[0,T]$. Since $Y$ is the
unique weak solution to \eqref{eqnroughPDEtransformed} the uniform
bounds for $X$ follow from Theorem \ref{thmexistenceweak}.

It remains to prove that the convergence $X^{(\ve)}_t \rightharpoonup
X_t$ is strong in $H$. As in \eqref{eqnlimitIto} and \eqref
{eqnItoapprox}, we have
%
%e3.20 #&#
\begin{eqnarray}
\label{eqnlimitIto2} \|Y_t\|_H^2& = &
\|Y_s\|_H^2 - 2 \int_s^t
\int_{\mcO} e^{\mu_r} Z_r Y_r
\,d\xi \,dr
\nonumber
\\
&&{} + 2 \int_s^t \int_{\mcO}
Z_r \bigl( 2 \nabla\bigl(e^{\mu_r}\bigr) \nabla \bigl((-\D
)^{-1}(Y_r)\bigr)\\
&&\hspace*{75pt}{} + \D\bigl(e^{\mu_r}\bigr) (-
\D)^{-1}(Y_r) \bigr) \,d\xi \,dr\nonumber\
\end{eqnarray}
and
%
%e3.21 #&#
%e3.22 #&#
\begin{eqnarray}
\label{eqnItoapprox2} \bigl\|Y^{(\ve)}_t\bigr\|_H^2
&=& \bigl\|Y^{(\ve)}_s\bigr\|_H^2 - 2 \int
_s^t \int_{\mcO}
e^{\mu_r^{(\ve)}} Z_r^{(\ve)} Y_r^{(\ve)}
\,d\xi \,dr
\nonumber\\
&&{}+ 2 \int_s^t \int_{\mcO}
Z_r^{(\ve)} \bigl( 2 \nabla\bigl(e^{\mu
_r^{(\ve
)}}\bigr)
\nabla\bigl((-\D)^{-1}\bigl(Y_r^{(\ve)}\bigr)\bigr)
 \\
 &&\hspace*{82pt}{}+ \D\bigl(e^{\mu_r^{(\ve)}}\bigr) (-\D)^{-1}\bigl(Y_r^{(\ve)}
\bigr) \bigr) \,d\xi \,dr.\nonumber
\end{eqnarray}
Since $Y^{(\ve)} \in L^2([0,T];L^2(\mcO))$ and $\frac{dY^{(\ve)}}{dt}
\in L^2([0,T];H)$ are uniformly bounded and $L^2(\mcO) \hookrightarrow
\hookrightarrow H$, by the Aubin--Lions compactness theorem we have
\[
Y^{(\ve)} \to Y\qquad \mbox{strongly in } L^2\bigl([0,T];H\bigr).
\]
Integrating \eqref{eqnlimitIto2} and \eqref{eqnItoapprox2} over
$s \in[0,t]$ and subtracting yields
\begin{eqnarray*}
t \limsup_{\ve\to0} \bigl(\bigl\|Y^{(\ve)}_t
\bigr\|_H^2 - \|Y_t\|_H^2
\bigr) \le0,
\end{eqnarray*}
which implies strong convergence $Y^{(\ve)}_t \to Y_t$ in $H$.

%
%%%-------------------------------------------------------------------------------------------------------
% -------------------------------- Limit solutions
%------------------------------------------
%
%%%-------------------------------------------------------------------------------------------------------

%s3.5 #&#
\subsection{Limit solutions and dynamics on $L^1(\mcO)$}

%s3.5.1 #&#
\subsubsection{$L^1$-continuity and a comparison
principle}\label{ssecL1-ctn}
We will now prove uniform $L^1$ continuity in the initial condition
for weak solutions to \eqref{eqnroughPDEtransformed}. Using this
uniform continuity we can then construct limit solutions to \eqref
{eqnroughPDEtransformed}.

%le3.5 #&#
\begin{lemma}\label{lemmaswapdt}
Let $Y \in L^\infty([0,T];L^1(\mcO))$ such that $t \mapsto Y_t(\xi)$
is continuously differentiable on $[0,T]$ for almost all $\xi\in\mcO$
and $\partial_t Y \in L^1(\mcO_T)$. Then
\[
\int_\mcO Y_t^+ d\xi- \int_\mcO
Y_s^+ \,d\xi= \int_s^t\int
_\mcO \partial_r Y_r
\operatorname{sgn}^+(Y_r) \,d\xi \,dr,
\]
where $(\cdot)^+ = \operatorname{max}(\cdot,0)$ and $\operatorname{sgn}^+(\cdot) = \operatorname{max}(\operatorname{sgn}(\cdot),0)$.
\end{lemma}
\begin{pf}
Let $t \in[0,T]$. Since $Y \in L^\infty([0,T];L^1(\mcO))$, there is a
sequence $t_n \to t$ and a constant $M > 0$ such that $\|Y_{t_n}\|
_{L^1(\mcO)} \le M$. By continuity of $t \mapsto Y_t(\xi)$ for almost
all $\xi\in\mcO$ we have $Y_{t_n}(\xi) \to Y_t(\xi)$ almost
everywhere. Fatou's lemma yields $\|Y_t\|_{L^1(\mcO)} \le\liminf_{n
\to\infty} \|Y_{t_n}\|_{L^1(\mcO)} \le M$. Thus, $Y_t \in L^1(\mcO)$
for all $t \in[0,T]$. Let $\s^{(\tau)} \in C^\infty(\R)$ be such that
\[
\s^{(\tau)}(r):= \cases{ 0, & \quad$\mbox{for } r \le0,$ \vspace*{2pt}
\cr
r, &\quad
$\mbox{for } r \ge\tau,$}
\]
with $0 \le\dot\s^{(\tau)} \le1$ and $ 0 \le\ddot\s^{(\tau)}
\le
\frac{C}{\tau}$. For $0 \le s<t\le T$ we obtain
\begin{eqnarray*}
\int_\mcO\s^{(\tau)}(Y_t) \,d\xi- \int
_\mcO\s^{(\tau)}(Y_s) \,d\xi &=& \int
_s^t \int_\mcO
\partial_r \s^{(\tau)}(Y_r) \,d\xi \,dr
\\
&=& \int_s^t \int_\mcO
\dot\s^{(\tau)}(Y_r) \partial_r Y_r
\,d\xi \,dr.
\end{eqnarray*}
By dominated convergence this yields the assertion.
\end{pf}

%le3.6 #&#
\begin{lemma}\label{lemmaL1-ctnicweaksoln}
Let $Y_0^{i} \in L^\infty(\mcO)$, $i = 1,2$ and $Y^{(i)}$ be the
corresponding essentially bounded weak solution to \eqref
{eqnroughPDEtransformed}. Then there exists a constant $C > 0$ such that
\begin{eqnarray*}
&&\sup_{t \in[0,T]}\bigl \|\bigl(Y^{(1)}_t-Y^{(2)}_t
\bigr)^+\bigr\|_{L^1(\mcO)} + \bigl\|\bigl(\Phi \bigl(e^{-\mu}Y^{(1)}
\bigr) - \Phi\bigl(e^{-\mu}Y^{(2)}\bigr)\bigr)^+
\bigr\|_{L^1(\mcO_T)}
\\
&&\qquad\le C \bigl\|\bigl(Y^{(1)}_0-Y^{(2)}_0
\bigr)^+\bigr\|_{L^1(\mcO)}
\end{eqnarray*}
and
\begin{eqnarray*}
&&\sup_{t \in[0,T]} \bigl\|Y^{(1)}_t-Y^{(2)}_t
\bigr\|_{L^1(\mcO)} +\bigl \|\Phi \bigl(e^{-\mu}Y^{(1)}\bigr) - \Phi
\bigl(e^{-\mu}Y^{(2)}\bigr)\bigr\|_{L^1(\mcO_T)}
\\
&&\qquad\le C \bigl\|Y^{(1)}_0-Y^{(2)}_0
\bigr\|_{L^1(\mcO)}.
\end{eqnarray*}
\end{lemma}
\begin{pf}
Let $\s^{(\tau)}$ be as in the proof of Lemma \ref{lemmaswapdt}, and
let $\vp\in C^2(\bar\mcO)$ be the unique classical solution to
\begin{eqnarray*}
\D\vp&=& -1\qquad \mbox{in } \mcO,
\\
\vp&=& 1\qquad \mbox{on } \partial\mcO.
\end{eqnarray*}
By the maximum principle we have $\vp\ge1$. By Theorem \ref
{thmuniquenessveryweaksoln} the weak solutions $Y^{(i)}$ coincide
with the weak solutions constructed in the proof of Theorem \ref
{thmexistenceweak} by approximation with classical solutions
$Y^{(i,\delta
)}$ to \eqref{eqnapprox}. Let $z^{(\delta)} \in C^\infty([0,T];\R
^N)$ be
the corresponding smooth approximation of the driving signal $z$. By
equicontinuity of $z^{(\delta)}$ we can find a partition $0 = \tau_0
< \tau
_1 < \cdots < \tau_N =T$ of $[0,T]$ such that
\begin{eqnarray*}
&& \Bigl( \mathop{\inf_{\xi\in\bar\mcO, }}_{ t \in[\tau_i,\tau
_{i+1}]}
e^{\mu^{(\delta)}_t(\xi)-\mu^{(\delta)}_{\tau_i}(\xi)} \Bigr)\\
&&\quad{} \times \bigl( -1 + 2 \|\vp\|_{C^1(\mcO)} \bigl( \bigl\|
\nabla\bigl(\mu^{(\delta )}_t-\mu^{(\delta )}_{\tau_i}
\bigr)\bigr\|_{C^0(\mcO)} +
\bigl\|\nabla\bigl(\mu^{(\delta)}_t-\mu^{(\delta)}_{\tau_i}
\bigr)\bigr\| _{C^0(\mcO
)}^2 \\
&&\hspace*{194pt}\quad{}+ \bigl\|\D\bigl(\mu^{(\delta)}_t-
\mu^{(\delta)}_{\tau_i}\bigr)\bigr\|_{C^0(\mcO
)} \bigr) \bigr) \\
&&\qquad\le-
\frac{1}{2}
\end{eqnarray*}
for all $t \in[\tau_i,\tau_{i+1}]$, all $i = 0,\ldots,N-1$ and all
$\delta>
0$. Let now $\delta> 0$ be arbitrary, fixed. For simplicity we drop
the $\delta
$ dependency of the signal in the following calculation.
Define
\[
\eta_t(\xi):= \vp(\xi) \sum_{i=0}^{N-1}
\mathbh{1}_{[\tau
_i,\tau
_{i+1})}(t) e^{-\mu_{\tau_i}(\xi)}.
\]
For $\tau_i \le s < t < \tau_{i+1}$, by Lemma \ref{lemmaswapdt},
we have
\begin{eqnarray*}
&&\int_\mcO\bigl(Y^{(1,\delta)}_t-Y^{(2,\delta)}_t
\bigr)^+ \eta_t \,d\xi- \int_\mcO
\bigl(Y^{(1,\delta)}_s-Y^{(2,\delta)}_s\bigr)^+
\eta_s \,d\xi
\\
&&\qquad = \int_s^t\int_\mcO
\partial_r \bigl(Y^{(1,\delta
)}-Y^{(2,\delta)}\bigr) \operatorname{sgn}^+
\bigl(Y^{(1,\delta)}_r-Y^{(2,\delta)}_r \bigr)
\eta_r \,d\xi \,dr.
\end{eqnarray*}
Let $Y^{(\delta)}:= Y^{(1,\delta)}-Y^{(2,\delta)}$ and $w^{(\delta
)}=\Phi(e^{-\mu_r})
(\Phi^{(\delta)}(Y^{(1,\delta)})-\Phi^{(\delta)}(Y^{(2,\delta)}))$.
We observe
%
%e3.23 #&#
\begin{eqnarray}
\label{eqnL^1-ctn-1} &&\int_s^t
\int_\mcO\partial_r Y^{(\delta)} \operatorname{sgn}^+
\bigl(Y^{(\delta
)}_r \bigr) \vp e^{-\mu_{\tau_i}} \,d\xi \,dr
\nonumber
\\
&&\qquad = \int_s^t\int_\mcO \bigl(\D w^{(\delta)}_r\bigr)
\operatorname{sgn}^+\bigl(w^{(\delta)}_r \bigr) e^{\mu_r-\mu _{\tau_i}}
\vp \,d\xi \,dr
\nonumber \\[-8pt]
\\[-8pt]
\nonumber &&\qquad = \lim_{\tau\to0} \biggl( - \int_s^t \int_\mcO\nabla
w^{(\delta)}_r \nabla\bigl( \dot\s^{(\tau)}\bigl(w^{(\delta)}_r\bigr)
\bigr) e^{\mu_r-\mu_{\tau _i}} \vp\ \,d\xi \,dr
 \\
 &&\hspace*{59pt}{}- \int_s^t\int_\mcO
\nabla w^{(\delta)}_r \nabla\bigl( e^{\mu
_r-\mu_{\tau_i}} \vp\bigr)
\dot\s^{(\tau)}\bigl(w^{(\delta)}_r\bigr) \,d\xi \,dr \biggr).
\nonumber
\end{eqnarray}
Since $\nabla\dot\s^{(\tau)} (w^{(\delta)}_r) = \ddot\s^{(\tau
)}(w^{(\delta
)}_r) \nabla w^{(\delta)}_r$, the first term has negative sign. Partial
integration of the second term gives
\begin{eqnarray*}
&&- \int_\mcO\nabla w^{(\delta)} \nabla\bigl(
e^{\mu_r-\mu_{\tau_i}} \vp\bigr) \dot \s^{(\tau)}\bigl(w^{(\delta)}\bigr) \,d
\xi\\
&&\qquad= \int_\mcO w^{(\delta)} \D\bigl( e^{\mu_r-\mu_{\tau_i}}
\vp\bigr) \dot \s^{(\tau
)}\bigl( w^{(\delta)}\bigr) \,d\xi
 + \int_\mcO w^{(\delta)} \nabla\bigl(e^{\mu_r-\mu_{\tau
_i}}
\vp\bigr) \nabla\dot\s^{(\tau)}\bigl(w^{(\delta)}\bigr) \,d\xi.
\end{eqnarray*}
For the second term on the right-hand side, we note
\begin{eqnarray*}
&&\int_\mcO w^{(\delta)} \nabla\bigl(e^{\mu_r-\mu_{\tau_i}}
\vp\bigr) \nabla\dot\s ^{(\tau)}\bigl(w^{(\delta)}\bigr) \,d\xi
\\
&&\qquad= \int_{\mcO\cap\{ 0 < w^{(\delta)} < \tau\}} w^{(\delta)} \ddot \s^{(\tau
)}
\bigl(w^{(\delta)}\bigr) \nabla\bigl(e^{\mu_r-\mu_{\tau_i}} \vp\bigr) \cdot\nabla
w^{(\delta)} \,d\xi\to0
\end{eqnarray*}
for $\tau\to0$, by $\ddot\s^{(\tau)} \le\frac{C}{\tau}$ and
dominated convergence. Using dominated convergence we can take the
limit $\tau\to0$ in \eqref{eqnL^1-ctn-1} to get
\begin{eqnarray*}
\int_s^t\hspace*{-0.5pt}\int_\mcO
\partial_r Y^{(\delta)} \operatorname{sgn}^+\bigl(Y^{(\delta)}_r
\bigr) \vp e^{-\mu
_{\tau_i}} \,d\xi \,dr \le\int_s^t\hspace*{-0.5pt}
\int_\mcO w^{(\delta)}_r \D\bigl(
e^{\mu_r-\mu_{\tau_i}} \vp\bigr) \operatorname{sgn}^+ \bigl(w^{(\delta)}_r\bigr) \,d
\xi \,dr.
\end{eqnarray*}
We note
\begin{eqnarray*}
&&\D \bigl( e^{\mu_r-\mu_{\tau_i}} \vp \bigr)
\\
&&\qquad= e^{\mu_r-\mu_{\tau_i}} \bigl( \D\vp+ 2 \nabla\vp\cdot\nabla (\mu_r-
\mu_{\tau_i}) + \vp\bigl(\bigl|\nabla(\mu_r-\mu_{\tau_i})\bigr|^2+
\D (\mu_r-\mu _{\tau_i})\bigr) \bigr)
\\
&&\qquad\le- \frac{1}{2},
\end{eqnarray*}
by the choice of $\vp$ and $\tau_i$. Thus
\begin{eqnarray*}
\int_s^t\int_\mcO
\partial_r Y^{(\delta)} \operatorname{sgn}^+ \bigl(Y^{(\delta)}_r
\bigr) \vp e^{-\mu
_{\tau_i}} \,d\xi \,dr + \frac{1}{2} \int_s^t
\int_\mcO\bigl(w^{(\delta
)}_r\bigr)^+ \,d\xi
\,dr \le0.
\end{eqnarray*}
In conclusion,
\begin{eqnarray*}
&&\int_\mcO\bigl(Y^{(1,\delta)}_t-Y^{(2,\delta)}_t
\bigr)^+ \eta_t \,d\xi- \int_\mcO
\bigl(Y^{(1,\delta)}_s-Y^{(2,\delta)}_s\bigr)^+
\eta_s \,d\xi+ \frac{1}{2} \int_s^t
\int_\mcO\bigl(w^{(\delta)}_r\bigr)^+ \,d\xi
\,dr
\\
&&\qquad = \int_s^t\int_\mcO
\partial_r Y^{(\delta)} \operatorname{sgn}^+ \bigl(Y^{(\delta)}_r
\bigr) \eta_r \,d\xi \,dr + \frac{1}{2} \int_s^t
\int_\mcO\bigl(w^{(\delta
)}_r\bigr)^+ \,d\xi
\,dr \le0
\end{eqnarray*}
for all $\tau_i \le s < t < \tau_{i+1}$ and hence for all $0 \le s < t
\le T$. We have
\begin{eqnarray*}
& &\bigl\|\bigl(Y^{(1,\delta)}_t-Y^{(2,\delta)}_t
\bigr)^+\bigr\|_{L^1(\mcO)} + \bigl\|\Phi \bigl(e^{-\mu^{(\delta
)}}\bigr) \bigl(
\Phi^{(\delta)}\bigl(Y^{(1,\delta)}\bigr) - \Phi^{(\delta
)}
\bigl(Y^{(2,\delta)}\bigr)\bigr)^+\bigr\|_{L^1(\mcO
_T)}
\\
&&\qquad\le C \bigl\|\bigl(Y^{(1,\delta)}_0-Y^{(2,\delta)}_0
\bigr)^+\bigr\|_{L^1(\mcO)}
\end{eqnarray*}
for all $t \in[0,T]$, where the constant $C$ does not depend on
$\delta$
(using uniform boundedness of $z^{(\delta)}$). By the proof of Theorem
\ref
{thmexistenceweak} we know that $Y^{(i,\delta)}_t \rightharpoonup
Y^{(i)}_t$ in $L^1(\mcO)$ and $\Phi(e^{-\mu^{(\delta)}})\Phi
^{(\delta)}(Y^{(i,\delta
)}) \rightharpoonup\Phi(e^{-\mu}Y^{(i)})$ in $L^2([0,T];H_0^1(\mcO))$.
By weak lower semicontinuity of $\|(\cdot)^+\|_{L^1(\mcO)}$ and $\|
(\cdot)^+\|_{L^1(\mcO_T)}$, taking the limit $\delta\to0$ we obtain
\begin{eqnarray*}
&&\bigl\|\bigl(Y^{(1)}_t-Y^{(2)}_t\bigr)^+
\bigr\|_{L^1(\mcO)} + \bigl\|\bigl(\Phi\bigl(e^{-\mu}Y^{(1)}\bigr) -
\Phi\bigl(e^{-\mu}Y^{(2)}\bigr)\bigr)^+\bigr\|_{L^1(\mcO_T)}
\\
&&\qquad\le C \bigl\|\bigl(Y^{(1)}_0-Y^{(2)}_0
\bigr)^+\bigr\|_{L^1(\mcO)}.
\end{eqnarray*}
Since $Z^{(i)}:= -Y^{(i)}$ again is an essentially bounded weak
solution of \eqref{eqnroughPDEtransformed}, the same assertion
follows for $\|(Y^{(1)}_t-Y^{(2)}_t)^-\|_{L^1(\mcO)}$. Adding both
inequalities yields
\begin{eqnarray*}
&&\bigl\|Y^{(1)}_t-Y^{(2)}_t
\bigr\|_{L^1(\mcO)} +\bigl \|\Phi\bigl(e^{-\mu}Y^{(1)}\bigr) - \Phi
\bigl(e^{-\mu}Y^{(2)}\bigr)\bigr\|_{L^1(\mcO_T)} \\
&&\qquad\le C
\bigl\|Y^{(1)}_0-Y^{(2)}_0\bigr\|
_{L^1(\mcO)}.
\end{eqnarray*}
\upqed\end{pf}

%re3.7 #&#
\begin{remark}
Following the same argument, but with $\D\vp=-1$ with homogeneous
Dirichlet boundary conditions, the same result can be established in
the weighted $L^1$-space $L^1_\vp$. This then allows us to construct
limit solutions even for initial conditions in $L^1_\vp$.
\end{remark}

Using this uniform $L^1$ continuity in the initial condition, we can
now construct limit solutions for all initial conditions in $L^1$.

\begin{pf*}{Proof of Theorem \ref{thmlimitsoln}}
Let $Y_0:= e^{\mu_0}X_0 \in L^1(\mcO)$ and $Y^{(\delta)}_0 \to Y_0$ in
$L^1(\mcO)$ with $Y^{(\delta)}_0 \in L^\infty(\mcO)$. Let
$Y^{(\delta)}$ be the
essentially bounded weak solution corresponding to $Y^{(\delta)}_0$. By
Lemma \ref{lemmaL1-ctnicweaksoln} we have
\begin{eqnarray*}
&&\sup_{t \in[0,T]}\bigl\|Y^{(\delta_1)}_t-Y^{(\delta_2)}_t
\bigr\|_{L^1(\mcO
)} + \bigl\|\Phi \bigl(e^{-\mu}Y^{(\delta_1)}\bigr) - \Phi
\bigl(e^{-\mu}Y^{(\delta_2)}\bigr)\bigr\| _{L^1(\mcO_T)}
\\
&&\qquad\le C \bigl\|Y^{(\delta_1)}_0-Y^{(\delta_2)}_0
\bigr\|_{L^1(\mcO)}
\end{eqnarray*}
for all $\delta_1,\delta_2 > 0$. Hence, $Y^{(\delta)}_t$ is a Cauchy
sequence in
$L^1(\mcO)$ and thus uniformly convergent to some limit $Y_t \in
L^1(\mcO)$. Since $\Phi(e^{-\mu}Y^{(\delta_1)})$ is a Cauchy
sequence in
$L^1(\mcO_T)$, and $\Phi$ is continuous, we obtain $\Phi(e^{-\mu
}Y^{(\delta
_1)}) \to\Phi(e^{-\mu}Y)$ in $L^1(\mcO_T)$.

By Theorem \ref{thmroughPDE}, $X^{(\delta)} = e^{-\mu} Y^{(\delta
)}$ are
rough weak solutions, and we conclude $X^{(\delta)}_t \to X_t:=
e^{-\mu_t}
Y_t$ uniformly in $L^1(\mcO)$ and $\Phi(X^{(\delta)}) \to\Phi(X)$ in
$L^1(\mcO_T)$. In the proof of Theorem \ref{thmexistenceweak}, we
have proven weak continuity of $t \mapsto Y_t^{(\delta)}$ in $L^p(\mcO)$.
Hence $t \mapsto X^{(\delta)}_t$ is weakly continuous in $L^1(\mcO)$ and
thus is $t \mapsto X_t$. The bound $X_t \le U_t$ follows immediately.
\end{pf*}

%s3.6 #&#
\subsection{Equicontinuity of solutions}\label{sseccontinuity}
\mbox{}
\begin{pf*}{Proof of Theorem \ref{thmctnlimitsoln}}
We only prove (i). The proofs of (ii) and (iii) are analogous. Let
$X_0 \in L^1(\mcO)$ and $X$ be the corresponding limit solution. Since
$K \subseteq(0,T] \times\mcO$ is compact, there is a $\tau>0$ such
that $K \subseteq[\tau,T] \times\mcO$. By Theorem~\ref
{thmlimitsoln} we know that $Y = e^{\mu}X \in L^\infty([\tau,T]
\times\mcO)$, and by Remark \ref{rmklimitareveryweak} $Y$ is a
very weak solution of \eqref{eqnroughPDEtransformed}. By Theorems
\ref
{thmuniquenessveryweaksoln} and~\ref{thmexistenceweak}
this implies that $Y$ is an essentially bounded weak solution to \eqref
{eqnroughPDEtransformed} on $[\tau,T]\times\mcO$ with initial
condition $Y_\tau$. Due to the uniform $L^\infty$ bound $U$ established
in Theorem \ref{thmlimitsoln}, $\|Y_\tau\|_{L^\infty(\mcO)}$ is
bounded independent of the initial condition $Y_0$. It is thus
sufficient to prove the claimed regularity for weak solutions $Y$ of
\eqref{eqnroughPDEtransformed} with a modulus of continuity
depending only on the data and $\|Y_0\|_{L^\infty(\mcO)}$.

Let $Y^{(\delta)}$ be the sequence of approximating solutions with initial
condition $Y^{(\delta)}_0$ and driving signal $z^{(\delta)}$ used in Theorem
\ref{thmexistenceweak}. By Theorem \ref{thmexistenceweak} and Lemma
\ref{lemmaL-infty-boundclassicalsolns}, $Y$ and $Y^{(\delta)}$ are
uniformly bounded, that is,
\[
\|Y\|_{L^\infty(\mcO_T)}, \bigl\|Y^{(\delta)}\bigr\|_{L^\infty(\mcO_T)} \le M \qquad\mbox{for all
} \delta\le\delta_0
\]
for some constant $M > 0$ depending on $\|Y_0\|_{L^\infty(\mcO)}$. We
aim to apply the continuity results for porous media type PDE given in
\cite{DB83} to the approximating equation~\eqref{eqnapprox}. In
\cite
{DB83} equations of the form
%
%e3.24 #&#
\begin{eqnarray}
\label{eqnDB} \frac{d}{dt} \b(v) &= \div a(t,\xi,v,\nabla v) + b(t,\xi,v,
\nabla v)\qquad \mbox{on } \mcO_T
\end{eqnarray}
with homogeneous Dirichlet boundary conditions and initial value $v_0$
are considered. We first rewrite the approximating equations in the
form of \eqref{eqnDB}. The approximating equation \eqref{eqnapprox}
(driven by $z^{(\delta)}$) is equivalent to
\begin{eqnarray*}
\partial_t Y^{(\delta)}_t &=& \div a^{(\delta)}
\bigl(t,\xi,\Phi ^{(\delta)}\bigl(Y^{(\delta
)}_t\bigr),\nabla
\Phi^{(\delta)}\bigl(Y^{(\delta)}_t\bigr)\bigr) \\
&&{}+
b^{(\delta)}\bigl(t,\xi,\Phi^{(\delta
)}\bigl(Y^{(\delta)}_t
\bigr),\nabla\Phi^{(\delta)}\bigl(Y^{(\delta)}_t\bigr)\bigr)
\end{eqnarray*}
with
\begin{eqnarray*}
a^{(\delta)}(t,\xi,z,p) &=& e^{(1-m) \mu^{(\delta)}_t(\xi)} p,
\\
b^{(\delta)}(t,\xi,z,p) &=& e^{\mu^{(\delta)}_t(\xi)} \D\bigl(\Phi
\bigl(e^{-\mu^{(\delta
)}_t(\xi)}\bigr)\bigr) z - (m+1) e^{(1-m) \mu^{(\delta)}_t(\xi)} \nabla\mu
^{(\delta
)}_t(\xi) \cdot p.
\end{eqnarray*}
Let $\b^{(\delta)}:=  ( \Phi^{\delta}  )^{-1}$. For the
approximating
solutions $Y^{(\delta)}$ we define $Z^{(\delta)}:= \Phi^{\delta
}(Y^{(\delta)})$. Then
$Z^{(\delta)}$ satisfies
%
%e3.25 #&#
\begin{equation}
\label{eqnDB-1} \partial_t \b^{(\delta)}\bigl(Z^{(\delta)}_t
\bigr) = \div a^{(\delta
)}\bigl(t,\xi,Z^{(\delta
)}_t,\nabla
Z^{(\delta)}_t\bigr) + b^{(\delta)}\bigl(t,
\xi,Z^{(\delta)}_t, \nabla Z^{(\delta)}_t\bigr).
\end{equation}

The continuity of solutions to equations of this type has been shown
in \cite{DB83} under the assumption of an a priori $L^\infty
([0,T]\times
\mcO)$-bound and a growth bound for~$b$ (among other assumptions). The
growth bound on $b$ used in \cite{DB83} is not satisfied by \eqref
{eqnDB-1}. However, using the a priori $L^\infty$ bound on
$Y^{(\delta)}$,
we can cut-off $b$ in the $z$ variable without changing the solution
property of $Y^{(\delta)}$, thus guaranteeing that the growth
condition is
satisfied. We modify $\Phi^{(\delta)}$ on $\R\setminus[-M,M]$ to obtain
$\dot\Phi^{(\delta,M)} \le C_2$ uniformly in $\delta$ [while preserving
properties (i)--(iii) in \eqref{eqnapproxprop}], and we modify $b$ by
\begin{eqnarray*}
&&b^{(M,\delta)}(t,\xi,z,p) \\
&&\qquad= e^{\mu^{(\delta)}_t(\xi)} \D\bigl(\Phi
\bigl(e^{-\mu^{(\delta
)}_t(\xi)}\bigr)\bigr) z \mathbh{1}_{|z| \le M} - (m+1)
e^{(1-m) \mu^{(\delta
)}_t(\xi
)} \nabla\mu^{(\delta)}_t(\xi) \cdot p.
\end{eqnarray*}
Let $\b^{(\delta,M)}:=  ( \Phi^{(\delta,M)}  )^{-1}$.
Using the
$L^\infty$ bound we realize that $Z^{(\delta)}$ is a solution of
\begin{eqnarray*}
\label{eqncuttedapproxeqn} \partial_t \b^{(\delta,M)}
\bigl(Z^{(\delta)}_t\bigr) & =& \div a^{(\delta
)}\bigl(t,
\xi,Z^{(\delta
)}_t,\nabla Z^{(\delta)}_t\bigr) +
b^{(M,\delta)}\bigl(t,\xi,Z^{(\delta)}_t, \nabla
Z^{(\delta
)}_t\bigr),
\\
Z^{(\delta)}(0) &=& Z^{(\delta)}_0:= \Phi^{(\delta)}
\bigl(Y^{(\delta
)}_0\bigr)\qquad \mbox{on } \mcO
\end{eqnarray*}
for $M$ large enough. By \cite{DB83}, we obtain that $Z^{(\delta)}$ and
thus $Y^{(\delta)}$ are equicontinuous on $K$ with modulus of continuity
depending only on the data, $\|Y_0\|_{L^\infty(\mcO)}$ and
$\operatorname{dist}(K,\partial\mcO_T)$. Hence, the set $\{Y^{(\delta)}| \delta>
0\}$ is a
compact subset of $C(K)$, and we can choose a uniformly convergent
subsequence. By the proof of existence of weak solutions we know that
$Y^{(\delta)} \rightharpoonup Y$ in $L^{m+1}(\mcO_T)$. Consequently
$Y^{(\delta
)} \to Y$ uniformly on $K$. This implies $Y \in C(K)$ with the same
modulus of continuity.
\end{pf*}

\begin{pf*}{Proof of Corollary \ref{corL^1-timectnlimitsoln}}
Let $X_0 \in L^1(\mcO)$ and $X$ be the corresponding limit solution.
By Theorem \ref{thmctnlimitsoln}, $t \mapsto X_t(\xi)$ is continuous
on $(0,T]$ for each $\xi\in\mcO$. By Theorem \ref{thmlimitsoln} $X$
is uniformly bounded on $[\tau,T]\times\mcO$ for all $\tau> 0$.
Dominated convergence implies $X \in C((0,T];L^p(\mcO))$. We can
approximate $X_0$ by $X_0^{(\delta)} \in C(\bar\mcO)$ such that
$X_0^{(\delta)}
\to X_0$ in $L^1(\mcO)$. Let $X^{(\delta)}$ be the weak solution
corresponding to $X_0^{(\delta)}$. By Theorem \ref{thmctnlimitsoln}(ii), $t \mapsto X^{(\delta)}_t(\xi)$ is continuous on $[0,T]$ for each
$\xi\in\mcO$ and by Theorem \ref{thmexistenceweak} $X^{(\delta
)}$ is
uniformly bounded in $[0,T]\times\mcO$. Dominated convergence implies
$X^{(\delta)} \in C([0,T];L^1(\mcO))$. By Theorem \ref
{thmlimitsoln} we
have $\sup_{t \in[0,T]} \|X^{(\delta)}_t - X_t\|_{L^1(\mcO)} \to
0$, hence
also $X \in C([0,T];L^1(\mcO))$. If $X_0 \in L^\infty(\mcO)$, then by
uniqueness of essentially bounded weak solutions and Theorem~\ref
{thmexistenceweak}, $X$ is uniformly bounded in $[0,T]\times\mcO$.
Since also $X \in C([0,T];L^1(\mcO))$ this implies $X \in
C([0,T];L^p(\mcO))$ by dominated convergence.
\end{pf*}

%
%%%-------------------------------------------------------------------------------------------------------
% -------------------------------- RDS & RA
%------------------------------------------
%
%%%-------------------------------------------------------------------------------------------------------
%s4 #&#
\section{Generation of an RDS and random attractors}\label{secRDS}
%
%s4.1 #&#
\subsection{Transformation in the semimartingale case}
\mbox{}
\begin{pf*}{Proof of Theorem \ref{thmtransformationveryweaksemimartingale}}
Let $z$ be a continuous semimartingale in $\R^N$, $X$~be the limit
solution to \eqref{eqnroughPDE} and $Y:= e^{\mu}X$. By Remark \ref
{rmklimitareveryweak}, $Y$ is a very weak solution to \eqref
{eqnroughPDEtransformed}. We will now prove that $X$ satisfies
\eqref
{eqnroughPDEveryweak}.

We consider the Sobolev spaces $H_0^{2k}(\mcO)$ with the norm $\|\cdot
\|_{H_0^{2k}(\mcO)}:= \|(-\D)^k\cdot\|_2$. By Sobolev embeddings there
is a $k \in\N$ (w.l.o.g. $k$ odd) such that $H_0^{2k}(\mcO)
\hookrightarrow C^0(\mcO)$ continuously. Hence $L^1(\mcO)
\hookrightarrow(H_0^{2k}(\mcO))^* =: H^{-{2k}}$ and $Y \in
C([0,T];L^1(\mcO)) \subseteq C([0,T];H^{-{2k}})$. Let $\vp\in
H_0^{2(k+1)}(\mcO)$ and $\td e_j$ be an orthonormal basis of $H_0^{2k}$
given by $\td e_j = \frac{e_j}{\l_j^k} = (-\D)^{-k}e_j$, where $e_j$ is
an orthonormal basis of eigenvectors of $-\D$ on $L^2(\mcO)$ with
homogeneous Dirichlet boundary conditions and $\l_k$ are the
corresponding eigenvalues. Further, let $P_M\dvtx H^{-2k} \to \operatorname{span}\{e_1,
\ldots, e_M\}$ be the orthogonal projection. Then ${P_M}_{|L^2(\mcO)}$,
${P_M}_{|H^{2k}_0(\mcO)}$ are the orthogonal projections onto $\operatorname{span}\{
e_1, \ldots, e_M\}$ in $L^2(\mcO)$, $H^{2k}_0(\mcO)$, respectively. We have
\begin{eqnarray*}
\int_\mcO X_t \vp \,d\xi &=& _{H^{-{2k}}}\bigl
\langle Y_t,e^{-\mu_t} \vp\bigr\rangle_{H_0^{2k}} \\
&=& \sum
_{j=1}^\infty \biggl( \int
_\mcO Y_t e_j \,d\xi \biggr)
\bigl(e_j,e^{-\mu
_t} \vp\bigr)_2.
\end{eqnarray*}
By the very weak solution property and continuity in $L^1(\mcO)$,
\[
\int_\mcO Y_t e_j \,d\xi= \int
_\mcO Y_s e_j \,d\xi+ \int
_s^t \int_\mcO \Phi
\bigl(e^{-\mu_r}Y_r\bigr) \D\bigl(e^{\mu_r}e_j
\bigr) \,d\xi \,dr\qquad \forall s \le t,
\]
and hence $t \mapsto\int_\mcO Y_t e_j \,d\xi$ is an absolutely
continuous map with derivative $\int_\mcO\Phi(e^{-\mu_t}Y_t) \D
(e^{\mu
_t}e_j)  \,d\xi$ for a.e. $t \in[0,T]$. By Theorem \ref
{thmroughPDE}, $Y_t$ is adapted. As in \cite{BDPR09-2}, page 22, by
use of the stochastic Fubini theorem (cf., e.g., \cite{vNV06}), we prove
\[
\bigl(e_j,e^{-\mu_t}\vp\bigr)_2 =
\bigl(e_j,e^{-\mu_0}\vp\bigr)_2 + \sum
_{k=1}^N \int_0^t
\bigl(e_j,f_k e^{-\mu_r}\vp\bigr)_2
\circ dz^{(k)}_r.
\]
In particular $(e_j,e^{-\mu_t}\vp)_2$ is a real-valued continuous
semimartingale. Hence, we can apply the It\^o product rule (cf.
\cite{P04}, page~83) to get
%
%e4.1 #&#
\begin{eqnarray}
\label{eqnsemimarttransf1} && \biggl( \int_\mcO
Y_t e_j \,d\xi \biggr) \bigl(e_j,e^{-\mu_t}
\vp\bigr)_2
\nonumber\\
&&\qquad= \biggl( \int_\mcO Y_s e_j \,d
\xi \biggr) \bigl(e_j,e^{-\mu_s}\vp\bigr)_2
\nonumber
\\[-8pt]
\\[-8pt]
\nonumber
&&\quad\qquad{} + \int_s^t \bigl(e_j,e^{-\mu_{r}}
\vp\bigr)_2 \biggl( \int_\mcO \Phi
(X_r) \D\bigl(e^{\mu_r} \td e_j\bigr) \,d\xi
\biggr) \,dr
\\
&&\quad\qquad{} + \sum_{k=1}^N \int
_s^t \biggl( \int_\mcO
Y_r e_j \,d\xi \biggr) \bigl(e_j,f_k
e^{-\mu_r}\vp\bigr)_2 \circ \,dz^{(k)}_r\nonumber
\end{eqnarray}
for all $0 \le s \le t \le T$, $\P$-almost surely. Note
\[
\int_\mcO\Phi(X_r) \D\bigl(e^{\mu_r}
\td e_j\bigr) \,d\xi = \int_\mcO
\Phi(X_r) \bigl( \td e_j \D e^{\mu_r} + 2 \nabla
e^{\mu_r} \cdot\nabla\td e_j + e^{\mu_r} \D\td
e_j\bigr) \,d\xi.
\]
We aim to sum over $j$ in \eqref{eqnsemimarttransf1}. For this we
have to rewrite the second summand on the right-hand side of the
equation above. Due to the lack of regularity of $\Phi(X)$ this
requires an additional approximation,
\begin{eqnarray*}
&&\int_\mcO\Phi(X_r) \nabla e^{\mu_r}
\cdot\nabla\td e_j \,d\xi
\\
&&\qquad= - \lim_{M \to\infty} \int_\mcO \bigl( \nabla
P_M \Phi(X_r) \cdot \nabla e^{\mu_r} +
P_M \Phi(X_r) \D\bigl( e^{\mu_r}\bigr) \bigr)
\td e_j \,d\xi.
\end{eqnarray*}
Hence,
\begin{eqnarray*}
&&\sum_{j=1}^K \bigl(\td
e_j,e^{-\mu_{r}}\vp\bigr)_{H_0^{2k}} \int
_\mcO\Phi(X_r) 2 \nabla e^{\mu_r} \cdot
\nabla\td e_j \,d\xi
\\
& &\qquad= - 2 \lim_{M \to\infty} \biggl( \int_\mcO
\bigl( \nabla P_M \Phi(X_r) \cdot\nabla
e^{\mu_r} \bigr)P_K\bigl(e^{-\mu_{r}}\vp\bigr) \,d\xi
\\
&&\hspace*{80pt}{}+\int_\mcO \bigl( P_M \Phi(X_r)
\D\bigl( e^{\mu_r}\bigr) \bigr) P_K\bigl(e^{-\mu_{r}}\vp
\bigr) \,d\xi \biggr).
\end{eqnarray*}
We obtain
\begin{eqnarray*}
&& \sum_{j=1}^\infty\bigl(\td
e_j,e^{-\mu_{r}}\vp\bigr)_{H_0^{2k}} \int
_\mcO \Phi (X_r) \D\bigl(e^{\mu_r} \td
e_j\bigr) \,d\xi
\\
&&\qquad = {}_{H^{-{2k}}}\!\bigl\langle\Phi(X_r) \D e^{\mu_r},
e^{-\mu_{r}}\vp \bigr\rangle_{H_0^{2k}} + {}_{H^{-{2k}}}\!\bigl\langle
\Phi(X_r) e^{\mu_r}, \D \bigl(e^{-\mu _{r}}\vp\bigr) \bigr
\rangle_{H_0^{2k}}
\\
&&\quad\qquad{} + 2 _{H^{-{2k}}}\bigl\langle\Phi(X_r), \nabla
e^{\mu_r} \cdot \nabla\bigl(e^{-\mu_{r}}\vp\bigr)\bigr
\rangle_{H_0^{2k}}
\\
&&\qquad = {}_{H^{-{2k}}}\!\bigl\langle\Phi(X_r), \D\vp\bigr
\rangle_{H_0^{2k}}.
\end{eqnarray*}
Summing up $j=1,\ldots,\infty$ in \eqref{eqnsemimarttransf1} yields
\begin{eqnarray*}
&&\int_\mcO X_t \vp \,d\xi
\\
&&\qquad= \int_\mcO X_s \vp \,d\xi+ \int
_s^t \int_\mcO
\Phi(X_r) \D\vp \,d\xi \,dr + \int_s^t
\biggl( \int_\mcO B(X_r)\vp \,d\xi \biggr) \circ
\,dz_r
\end{eqnarray*}
for all $0\le s \le t \le T$ and all $\vp\in H^{2(k+1)}_0(\mcO)$ [thus
by approximation for all $\vp\in C^2_0(\bar\mcO)$] $\P$-almost surely.
\end{pf*}

%s4.2 #&#
\subsection{Quasi-continuous random dynamical systems}
\mbox{}
\begin{pf*}{Proof of Lemma \ref{lemmaascmpolimit}}
Since $\tau$ is weaker than the norm topology, we have $\Omega
(B,\omega)
\subseteq\Omega^\tau(B,\omega)$. Let now $y \in\Omega^\tau
(B,\omega)$. Then
there are $t_n \to\infty$ and $x_n \in B(\t_{-t_n}\omega)$ such
that $\vp
(t_n,\t_{-t_n}\omega)x_n \to^\tau y$. By $\mcD$ asymptotic compactness
there is a convergent subsequence $\vp(t_{n_k},\t_{-t_{n_k}}\omega
)x_{n_k}$. Since $\tau$ is weaker than norm topology and Hausdorff, we
conclude $\vp(t_{n_k},\t_{-t_{n_k}}\omega)x_{n_k} \to y \ni\Omega
(B,\omega) $.
\end{pf*}

\begin{pf*}{Proof of Lemma \ref{lemmaascmpolimit2}}
Without loss of generality we may assume that $F$ is a bounded $\mcD
$-absorbing set by augmenting $F$ to some $\ve$-neighborhood of $F$ for
some $\ve> 0$.

Let $t_n \to\infty$ and $x_n \in B(\t_{-t_n}\omega)$. Then there is a
convergent subsequence $\vp(t_{n_l},\t_{t_{n_l}}\omega)x_{t_{n_l}}
\to x
\in\Omega(B,\omega)$. Hence, $\Omega(B,\omega)$ is nonempty.

\textit{Compactness}:
Let $x_n \in\Omega(B,\omega)$. For every $n \in\N$ there are sequences
$t_{k(n)} \to\infty$ and $y_{k(n)} \in B(\t_{-t_{k(n)}}\omega)$
such that
$\vp(t_{k(n)},\t_{-t_{k(n)}}\omega)y_{k(n)} \to x_n$ for $k(n) \to
\infty$.
Therefore, we can find sequences $t_n \to\infty$, $y_n \in B(\t
_{-t_n}\omega)$ such that $\|\vp(t_n,\t_{-t_n}\omega)y_n -x_n\|_X <
\frac
{1}{n}$. By $\mcD$-asymptotic compactness there is a convergent
subsequence $\vp(t_{n_l},\t_{-t_{n_l}}\omega)y_{n_l} \to x \ni
\Omega(B,\omega
)$. Hence, $x_{n_l} \to x \ni\Omega(B,\omega)$.

\textit{Invariance}:  First let $x \in\Omega(B,\omega)$. We need to prove
$\vp(t,\omega)x \in\Omega(B,\t_t\omega)$. Since $x \in\Omega
(B,\omega)$ there are
sequences $t_n \to\infty$, $x_n \in B(\t_{-t_n}\omega)$ such that
$\vp
(t_n,\t_{-t_n}\omega)x_n \to x$. By the cocycle property $\vp
(t+t_n,\t
_{-t_n}\omega)x_n=\break  \vp(t,\omega)\vp(t_n,\t_{-t_n}\omega)x_n$ and
by bounded
absorption $\vp(t+t_n,\t_{-t_n}\omega)x_n = \vp(t+t_n,\t
_{-(t+t_n)}\t_t\omega
)x_n \in F(\t_t\omega)$ for $n$ large enough.
By quasi-$\tau$-continuity we conclude $\vp(t+t_n,\t_{-t_n}\omega
)x_n \to
^\tau\vp(t,\omega)x$. Hence $\vp(t,\omega)x \in\Omega^\tau(B,\t
_t\omega) =  \Omega
(B,\t_t\omega)$.

Let now $z \in\Omega(B,\t_t\omega)$, that is,
%
%e4.2 #&#
\begin{equation}
\label{eqninv1} \vp(t_n,\t_{-t_n}\t_t
\omega)x_n \to z
\end{equation}
for some $t_n \to\infty$ and $x_n \in B(\t_{-t_n}\t_t\omega)$. By
$\mcD
$-asymptotic compactness of $\vp$ there is a subsequence $\vp
(t_{n_l}-t,\t_{-(t_{n_l}-t)}\omega)x_{n_l} \to x \ni\Omega(B,\omega
)$. By
\eqref{eqninv1}, quasi-$\tau$-continuity and the cocycle property, we
have $\vp(t_{n_l},\t_{-t_{n_l}}\t_t\omega)x_{n_l} = \vp(t,\omega)
\vp
(t_{n_l}-t,\t_{-(t_{n_l}-t)}\omega)x_{n_l} \to^\tau\vp(t,\omega
)x$. Since $\tau
$ is weaker than norm topology and Hausdorff, we conclude $z = \vp
(t,\omega
)x$ with $x \in\Omega(B,\omega)$.
\end{pf*}

\begin{pf*}{Proof of Theorem \ref{thmexra}}
Necessity of the conditions follows from compactness of $A$ and its
attraction property. To prove sufficiency we first observe that by
Lemma \ref{lemmaascmpolimit2},
\[
A(\omega):= \O(F,\omega)
\]
is compact and invariant. Since $F \in\mcD$ and $F$ is $\mcD
$-attracting we have $A(\omega) \subseteq F(\omega)$ for all $\omega
\in\O$ and
thus $A \in\mcD$. We only need to prove attraction. We first observe that
\[
\O(D,\omega) \subseteq\O(F,\omega) = A(\omega) \qquad\forall D \in\mcD, \omega\in \O.
\]
Indeed, by attraction we have $\O(B,\omega) \subseteq F(\omega)$. By
Lemma \ref
{lemmaascmpolimit2} we know that $\O(B,\omega) = \vp(t,\t
_{-t}\omega)\O
(B,\t_{-t}\omega) \subseteq\vp(t,\t_{-t}\omega)F(\t_{-t}\omega
)$. Hence
\[
\O(B,\omega) \subseteq\bigcap_{t \ge0} \vp(t,
\t_{-t}\omega)F(\t _{-t}\omega) \subseteq\O(F,\omega) = A(
\omega).
\]
Assume that $A$ is not attracting. Then there is a set $B \in\mcD$, an
$\omega\in\O$, sequences $t_n \to\infty$, $x_n \in B(\t
_{-t_n}\omega)$ and a
$\delta> 0$ such that
\[
d\bigl(\vp(t_n,\t_{-t_n}\omega)x_n, A(
\omega)\bigr) \ge\delta
\]
for all $n \in\N$. By asymptotic compactness, there is a convergent
subsequence $\vp(t_{n_l},\t_{-t_{n_l}}\omega)x_{n_l} \to x \in
\Omega(B,\omega)
\subseteq A(\omega)$, which implies a contradiction.
\end{pf*}

%s4.3 #&#
\subsection{\texorpdfstring{Construction of an RDS for \protect\eqref{eqnroughPDE}}
{Construction of an RDS for (1.2)}}
\mbox{}
\begin{pf*}{Proof of Theorem \ref{thmgenerationRDS}}
By Theorem \ref{thmlimitsoln} the map $x \mapsto X(t,s;\omega)x$ is
Lipschitz continuous in $X=L^1(\mcO)$, locally uniformly in $s,t$.
Uniqueness of essentially bounded very weak solutions implies the flow property
\[
X(t,s;\omega)x = X(t,r;\omega)X(r,s;\omega)x\qquad\forall\omega \in\O, s \le r \le t
\]
and cocycle property
\[
X(t,s;\t_r \omega)x = X(t+r,s+r;\omega)x\qquad \forall\omega\in \O, s
\le t, r \in\R
\]
for all $x \in L^\infty(\mcO)$. By Lipschitz continuity in the initial
condition, these properties remain true for all $x \in X = L^1(\mcO)$.

Next, we prove measurability of the map $(t,s,\omega,x) \mapsto
X(t,s;\omega
)x$. First let $t \ge s$ and $x \in L^\infty(\mcO)$. By Theorem \ref
{thmroughPDE} the map $\mu\to X_t(\mu)$ from $C(\R;\R^N)$ to $H$ is
continuous. Since also $\omega\mapsto\mu(\omega)$ is a measurable
map, this
implies measurability of $\omega\mapsto X(t,s;\omega)x$ in $H$. Hence
\[
\omega\mapsto\int_\mcO \bigl( X(t,s;\omega)x \bigr) h \,d\xi
\]
is measurable for all $h \in H_0^1(\mcO)$. Since $X=L^1(\mcO)$ is
separable, by the Pettis measurability theorem, this implies
measurability of $X(t,s;\cdot)x$. By approximation, this remains true
for all $x \in X$. Since $X(\cdot,s;\omega)x \in C([0,T];X)$, for all
$\omega
\in\O$, we deduce joint measurability of $(t,\omega) \mapsto
X(t,s;\omega)x$
in $X$. Using $X(t,s;\omega)x = X(t-s,0;\t_s\omega)x$ and joint measurability
of $(s,\omega) \mapsto(t-s,\t_s\omega)$ this implies measurability
of $(s,\omega)
\mapsto X(t,s;\omega)x$. Hence, measurability of $(t,s,\omega,x)
\mapsto
X(t,s;\omega)x$ follows, and $\vp$ defines a continuous RDS on
$L^1(\mcO)$.

By Theorem \ref{thmlimitsoln}, $\vp(t,\omega)x \in L^p(\mcO)$ for
all $t
\in\R_+$ if $x \in L^p(\mcO)$, $p \in[1,\infty]$. Since $L^p(\mcO)$
is reflexive for $p \in(1,\infty)$ this implies quasi-weak-continuity
of $\vp$ on $L^p(\mcO)$ for all $p \in(1,\infty)$ by Proposition
\ref
{propquasi-tau-cont}. For $p = \infty$ we note that $\s(L^\infty
,\overline{i^*(L^\infty)})$ is the weak$^*$ topology. By Proposition
\ref{propquasi-tau-cont} quasi-weak$^*$-continuity of $\vp$ on
$L^\infty(\mcO)$ follows.
\end{pf*}

%s4.4 #&#
\subsection{\texorpdfstring{Bounded absorption, asymptotic compactness and random attractors for~$\vp$}
{Bounded absorption, asymptotic compactness and random attractors for phi}}

In the following let $\mcD$ be the universe of all random closed sets.

%pr4.1 #&#
\begin{proposition}[(Bounded absorption)]\label{propbddabs}
There is an $L^\infty(\mcO)$-bounded (i.e., $\|F(\omega)\|_{L^\infty
(\mcO
)} < \infty$) $\mcD$-absorbing random set $F \in\mcD$. The absorption
time for $D \in\mcD$, $\omega\in\O$ can be chosen independent of
$\omega$
and $D$.
\end{proposition}
\begin{pf}
Recall that by Theorem \ref{thmgenerationRDS} we have $\vp(t,\omega)x
\le U_t(\omega)$ a.e. in $\mcO$ for all $t \ge0$ and all $x \in X$. For
$D \in\mcD$:
\begin{eqnarray*}
\vp(t,\t_{-t}\omega)D(\t_{-t}\omega) = \vp(1,
\t_{-1}\omega) \vp (t-1,\t_{-t}\omega)D(\t _{-t}
\omega) \le U_{1}(\t_{-1}\omega),
\end{eqnarray*}
a.e. in $\mcO$ for all $t \ge1$. Hence,
\[
F(\omega) = \bigl\{x \in L^\infty(\mcO)| \|x\|_{L^\infty(\mcO)} \le\bigl\|
U_{1}(\t _{-1}\omega)\bigr\|_{ L^\infty(\mcO)}\bigr\}
\]
is a $\mcD$-absorbing set with absorption time $t \equiv1$.
\end{pf}

%le4.2 #&#
\begin{lemma}[(Asymptotic compactness)]
\begin{longlist}[(ii)]
\item[(i)] The RDS $\vp$ is $\mcD$-asymptotically compact on each
$L^p(\mcO)$, $p \in[1,\infty)$.
\item[(ii)] If $(\mcO1)$ is satisfied, then there exists a compact
$\mcD$-absorbing set $K$ with $K(\omega) \subseteq C^0(\bar\mcO)$ compact
for each $\omega\in\O$. In particular, $\vp$ is $\mcD$-asymptoti\-cally
compact on $L^\infty(\mcO)$.
\end{longlist}
\end{lemma}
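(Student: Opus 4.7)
Both parts reduce, via the cocycle identity $\vp(t,\t_{-t}\o) = \vp(1,\t_{-1}\o)\circ\vp(t-1,\t_{-t}\o)$ and the bounded absorbing set $F$ from Proposition \ref{prop:bdd_abs}, to studying the image $\vp(1,\t_{-1}\o)F(\t_{-1}\o)$, where $F(\t_{-1}\o)$ is bounded in $L^\infty(\mcO)$. The key tool is Theorem \ref{thm:ctn_limit_soln}: in parts (i) and (iii) there the modulus of continuity depends only on the data and geometric quantities, \emph{not} on the initial condition, so whole families of solutions with bounded $L^\infty$-data are equicontinuous. Arzel\`a--Ascoli then does the compactness work.

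\textbf{Part (i).} Let $t_n \to \infty$, $x_n \in D(\t_{-t_n}\o)$ with $D \in \mcD$. By Proposition \ref{prop:bdd_abs}, for $n$ large enough, $y_n := \vp(t_n-1,\t_{-t_n}\o)x_n$ lies in $F(\t_{-1}\o)$, so $\|y_n\|_{L^\infty(\mcO)}$ is uniformly bounded. The cocycle property gives $\vp(t_n,\t_{-t_n}\o)x_n = \vp(1,\t_{-1}\o)y_n$. Applying Theorem \ref{thm:ctn_limit_soln}(i) with $T=1$ to each solution $t\mapsto \vp(t,\t_{-1}\o)y_n$ shows that on every compact $K' \subseteq \mcO$ the restrictions at time $t=1$ are equibounded and have a common modulus of continuity. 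A diagonal argument over an exhaustion $K'_m \uparrow \mcO$ extracts a subsequence $\vp(1,\t_{-1}\o)y_{n_k}$ converging uniformly on each $K'_m$, hence pointwise a.e.\ on $\mcO$. Combined with the uniform $L^\infty$ bound $\|\vp(1,\t_{-1}\o)y_n\|_{L^\infty(\mcO)} \le \|U_1(\t_{-1}\o)\|_{L^\infty(\mcO)}$ coming from Theorem \ref{thm:generation_RDS}(iii) (and its negative counterpart by the symmetry $\vp(\cdot,\o)(-y) = -\vp(\cdot,\o)y$), dominated convergence yields convergence in $L^p(\mcO)$ for every $p \in [1,\infty)$.

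\textbf{Part (ii).} Under $(\mcO 1)$, Theorem \ref{thm:ctn_limit_soln}(iii) applied with $T=2$, $\tau=1$ provides a modulus of continuity on $[1,2]\times\bar\mcO$ depending only on the data, $\t^*$ and $\tau=1$, uniformly over initial data. Hence the family $\vp(1,\t_{-1}\o)F(\t_{-1}\o)$ is equibounded in $L^\infty(\mcO)$ and equicontinuous on $\bar\mcO$, so its $C^0(\bar\mcO)$-closure
\[ K(\o) := \overline{\vp(1,\t_{-1}\o)F(\t_{-1}\o)}^{C^0(\bar\mcO)} \]
is compact in $C^0(\bar\mcO)$ by Arzel\`a--Ascoli, and a fortiori compact in $L^\infty(\mcO)$. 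For any $D \in \mcD$ and $t \ge 2$, absorption and the cocycle property give $\vp(t,\t_{-t}\o)D(\t_{-t}\o) \subseteq \vp(1,\t_{-1}\o)F(\t_{-1}\o) \subseteq K(\o)$, so $K$ is $\mcD$-absorbing with absorption time $t\equiv 2$. Measurability of $\o \mapsto K(\o)$ follows from measurability of $F$ and the continuity of $y \mapsto \vp(1,\t_{-1}\o)y$ provided by Theorem \ref{thm:generation_RDS}. Asymptotic compactness on $L^\infty(\mcO)$ is then immediate, since any sequence of the form $\vp(t_n,\t_{-t_n}\o)x_n$ is eventually contained in the $L^\infty$-compact set $K(\o)$.

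\textbf{Main obstacle.} The heavy lifting was already done in Theorem \ref{thm:ctn_limit_soln}; what remains is essentially bookkeeping. The one point that really has to be checked is the initial-condition-independence of the moduli of continuity in parts (i) and (iii) of that theorem, which is exactly what reduces asymptotic compactness to an Arzel\`a--Ascoli argument applied to $\vp(1,\t_{-1}\o)F(\t_{-1}\o)$; this independence is in turn a consequence of the universal $L^\infty$-supersolution $U_t$ from Theorem \ref{thm:existence_weak}. The role of $(\mcO 1)$ in Part (ii) is precisely to upgrade interior equicontinuity to equicontinuity up to $\partial\mcO$, which is what allows a \emph{compact} absorbing set in $C^0(\bar\mcO)$ and hence asymptotic compactness in the strong $L^\infty$ topology; without $(\mcO 1)$, one can still argue asymptotic compactness in $L^p$ for $p < \infty$ by Part (i), but not in $L^\infty$.
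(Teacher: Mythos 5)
Your proof is correct and follows essentially the same route as the paper: reduce via the cocycle property and the bounded absorbing set $F$ to the image $\vp(1,\t_{-1}\o)F(\t_{-1}\o)$, invoke the initial-condition-independent modulus of continuity from Theorem \ref{thm:ctn_limit_soln} (through Theorem \ref{thm:generation_RDS}), and conclude by a diagonal/Arzel\`a--Ascoli argument plus dominated convergence for $p<\infty$, respectively a compact absorbing set in $C^0(\bar\mcO)$ under $(\mcO 1)$. Your bookkeeping is in fact slightly more careful than the paper's (writing $F(\t_{-1}\o)$ rather than $F(\o)$ after absorption, and taking the $C^0(\bar\mcO)$-closure so that $K(\o)$ is genuinely compact), but these are cosmetic refinements of the same argument.
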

\begin{pf}
(i): Let $t_n \to\infty$, $D \in\mcD$ and $x_n \in D(\t
_{-t_n}\omega)$.
In Proposition \ref{propbddabs} we have proved the existence of a
$\mcD$-absorbing random set $F$. Note
\begin{eqnarray*}
\vp(t_n,\t_{-t_n}\omega)x_n &=& \vp(1,
\t_{-1}\omega) \vp(t_n-1,\t_{-(t_n-1)}
\t_{-1}\omega)x_n
\\
&\subseteq&\vp(1,\t_{-1}\omega) F(\omega)
\end{eqnarray*}
for all $t_n \ge2$. Since $F(\omega)$ is bounded in $L^\infty(\mcO
)$, by
Theorem \ref{thmgenerationRDS} $\vp(1,\break \t_{-1}\omega)F(\omega)$
is a set of
uniformly continuous functions on each compact set $K \subseteq\mcO$
with modulus of continuity depending only on $m$, $\operatorname{dist}(K,\partial\mcO
)$ and $\|F(\omega)\|_{L^\infty(\mcO)}$. Let $\{K_k| k \in\N\}$ be a
sequence of compact sets in $\mcO$, such that $\mcO= \bigcup_{k \in
\N
} K_k$. For each $k \in\N$ we can choose a convergent subsequence of
$\vp(t_n,\t_{-t_n}\omega)x_n \in C^0(K_k)$. Passing to a diagonal sequence,
we can thus choose a subsequence (again denoted by~$n$) such that $\vp
(t_n,\t_{-t_n}\omega)x_n$ is convergent in each $C^0(K_k)$ and in
particular pointwise convergent in all of $\mcO$. By the uniform
$L^\infty(\mcO)$ bound on $\vp(t_n;\t_{-t_n}\omega)x_n$ this implies
convergence of $\vp(t_n;\t_{-t_n}\omega)x_n$ in $L^p(\mcO)$, for
each $p
\in[1,\infty)$.

(ii): By Theorem \ref{thmgenerationRDS}(iii) the set $K(\omega
):=\vp
(1,\t_{-1}\omega)F(\omega)$ is uniformly bounded and equicontinuous
in $C^0(\bar
\mcO)$. Since $F(\omega)$ is absorbing, so is the set $\vp(1,\t
_{-1}\omega)F(\omega)$.
\end{pf}

\begin{pf*}{Proof of Theorem \ref{thmexistenceRA}}
Let $\mcD^p$ be the universe of all random sets in $L^p(\mcO)$, $p
\in
[1,\infty]$.

The (unique) existence of a $\mcD^p$-random attractor $A^p$ in
$L^p(\mcO)$ follows from $\mcD^p$-absorption, $\mcD^p$-asymptotic
compactness, quasi-weak-continuity of $\vp$ on $L^p(\mcO)$ and Theorem
\ref{thmexra} for each $p \in[1,\infty)$. Since $F$ in Proposition
\ref{propbddabs} is an $L^\infty$ bounded set absorbing all sets in
$\mcD^1$, all these attractors coincide.

By the invariance property of the random attractor and Proposition \ref
{propbddabs} we have $A(\omega) = \vp(t,\t_{-t}\omega)A(\t
_{-t}\omega) \subseteq
F(\omega)$, for all $t \ge1$ and thus $L^\infty$ boundedness of~$A$. Again
by invariance of~$A$, $A(\omega) = \vp(1,\t_{-1}\omega)A(\t
_{-1}\omega) \subseteq
\vp(1,\break \t_{-1}\omega)F(\t_{-1}\omega)$. Invoking Theorem \ref
{thmgenerationRDS} yields equicontinuity on each compact set
$K\subseteq\mcO$.

If $(\mcO1)$ is satisfied, then we can argue as above for $p = \infty$.
\end{pf*}

% zodis "Acknowledgments" paliekamas pagal autoriu
\section*{Acknowledgments}
The author would like to thank Michael R\"ockner for valuable discussions
and comments. Several helpful comments by the anonymous referee are
gratefully acknowledged.

%% \MRhref is called by the amsart/book/proc definition of \MR.
%}

% imsref loaded by akundreckaite, 2013-10-08 13:29:25
%

%suskaldyti doi\phantom{}

\printaddresses


\begin{thebibliography}{46}
% BibTex style file: ims.bst, 2013-01-28
% Default style options (sort=0,type=number).
% Used options (sort=1,type=number).

%b1 #&#
\bibitem{A91}
%
\begin{barticle}[mr]
\bauthor{\bsnm{Anderson},~\bfnm{Jeffrey~R.}\binits{J.~R.}}
(\byear{1991}).
\btitle{Local existence and uniqueness of solutions of degenerate parabolic
equations}.
\bjournal{Comm. Partial Differential Equations}
\bvolume{16}
\bpages{105--143}.
\bid{doi={10.1080/03605309108820753}, issn={0360-5302}, mr={1096835}}
\bptok{imsref}%
\end{barticle}
%
\endbibitem

%b2 #&#
\bibitem{A98}
%
\begin{bbook}[mr]
\bauthor{\bsnm{Arnold},~\bfnm{Ludwig}\binits{L.}}
(\byear{1998}).
\btitle{Random Dynamical Systems}.
\bpublisher{Springer}, \blocation{Berlin}.
\bid{mr={1723992}}
\bptok{imsref}%
\end{bbook}
%
\endbibitem

%b3 #&#
\bibitem{AS95}
%
\begin{barticle}[mr]
\bauthor{\bsnm{Arnold},~\bfnm{Ludwig}\binits{L.}} \AND
\bauthor{\bsnm{Scheutzow},~\bfnm{Michael}\binits{M.}}
(\byear{1995}).
\btitle{Perfect cocycles through stochastic differential equations}.
\bjournal{Probab. Theory Related Fields}
\bvolume{101}
\bpages{65--88}.
\bid{doi={10.1007/BF01192196}, issn={0178-8051}, mr={1314175}}
\bptok{imsref}%
\end{barticle}
%
\endbibitem

%b4 #&#
\bibitem{BDPR08}
%
\begin{barticle}[mr]
\bauthor{\bsnm{Barbu},~\bfnm{Viorel}\binits{V.}},
\bauthor{\bsnm{Da~Prato},~\bfnm{Giuseppe}\binits{G.}} \AND
\bauthor{\bsnm{R{\"o}ckner},~\bfnm{Michael}\binits{M.}}
(\byear{2008}).
\btitle{Existence and uniqueness of nonnegative solutions to the stochastic
porous media equation}.
\bjournal{Indiana Univ. Math. J.}
\bvolume{57}
\bpages{187--211}.
\bid{doi={10.1512/iumj.2008.57.3241}, issn={0022-2518}, mr={2400255}}
\bptok{imsref}%
\end{barticle}
%
\endbibitem

%b5 #&#
\bibitem{BDPR08-2}
%
\begin{barticle}[mr]
\bauthor{\bsnm{Barbu},~\bfnm{Viorel}\binits{V.}},
\bauthor{\bsnm{Da~Prato},~\bfnm{Giuseppe}\binits{G.}} \AND
\bauthor{\bsnm{R{\"o}ckner},~\bfnm{Michael}\binits{M.}}
(\byear{2008}).
\btitle{Some results on stochastic porous media equations}.
\bjournal{Boll. Unione Mat. Ital. (9)}
\bvolume{1}
\bpages{1--15}.
\bid{issn={1972-6724}, mr={2387994}}
\bptok{imsref}%
\end{barticle}
%
\endbibitem

%b6 #&#
\bibitem{BDPR09}
%
\begin{barticle}[mr]
\bauthor{\bsnm{Barbu},~\bfnm{Viorel}\binits{V.}},
\bauthor{\bsnm{Da~Prato},~\bfnm{Giuseppe}\binits{G.}} \AND
\bauthor{\bsnm{R{\"o}ckner},~\bfnm{Michael}\binits{M.}}
(\byear{2009}).
\btitle{Existence of strong solutions for stochastic porous media equation
under general monotonicity conditions}.
\bjournal{Ann. Probab.}
\bvolume{37}
\bpages{428--452}.
\bid{doi={10.1214/08-AOP408}, issn={0091-1798}, mr={2510012}}
\bptok{imsref}%
\end{barticle}
%
\endbibitem

%b7 #&#
\bibitem{BDPR09-3}
%
\begin{barticle}[mr]
\bauthor{\bsnm{Barbu},~\bfnm{Viorel}\binits{V.}},
\bauthor{\bsnm{Da~Prato},~\bfnm{Giuseppe}\binits{G.}} \AND
\bauthor{\bsnm{R{\"o}ckner},~\bfnm{Michael}\binits{M.}}
(\byear{2009}).
\btitle{Finite time extinction for solutions to fast diffusion stochastic
porous media equations}.
\bjournal{C. R. Math. Acad. Sci. Paris}
\bvolume{347}
\bpages{81--84}.
\bid{doi={10.1016/j.crma.2008.11.018}, issn={1631-073X}, mr={2536755}}
\bptok{imsref}%
\end{barticle}
%
\endbibitem

%b8 #&#
\bibitem{BDPR09-2}
%
\begin{barticle}[mr]
\bauthor{\bsnm{Barbu},~\bfnm{Viorel}\binits{V.}},
\bauthor{\bsnm{Da~Prato},~\bfnm{Giuseppe}\binits{G.}} \AND
\bauthor{\bsnm{R{\"o}ckner},~\bfnm{Michael}\binits{M.}}
(\byear{2009}).
\btitle{Stochastic porous media equations and self-organized criticality}.
\bjournal{Comm. Math. Phys.}
\bvolume{285}
\bpages{901--923}.
\bid{doi={10.1007/s00220-008-0651-x}, issn={0010-3616}, mr={2470909}}
\bptok{imsref}%
\end{barticle}
%
\endbibitem

%b9 #&#
\bibitem{BR11b}
%
\begin{barticle}[mr]
\bauthor{\bsnm{Barbu},~\bfnm{Viorel}\binits{V.}} \AND
\bauthor{\bsnm{R{\"o}ckner},~\bfnm{Michael}\binits{M.}}
(\byear{2011}).
\btitle{On a random scaled porous media equation}.
\bjournal{J. Differential Equations}
\bvolume{251}
\bpages{2494--2514}.
\bid{doi={10.1016/j.jde.2011.07.012}, issn={0022-0396}, mr={2825337}}
\bptok{imsref}%
\end{barticle}
%
\endbibitem

%b10 #&#
\bibitem{BCM84}
%
\begin{barticle}[mr]
\bauthor{\bsnm{B{\'e}nilan},~\bfnm{Philippe}\binits{P.}},
\bauthor{\bsnm{Crandall},~\bfnm{Michael~G.}\binits{M.~G.}} \AND
\bauthor{\bsnm{Pierre},~\bfnm{Michel}\binits{M.}}
(\byear{1984}).
\btitle{Solutions of the porous medium equation in {$\mathbf{R}^{N}$} under
optimal conditions on initial values}.
\bjournal{Indiana Univ. Math. J.}
\bvolume{33}
\bpages{51--87}.
\bid{doi={10.1512/iumj.1984.33.33003}, issn={0022-2518}, mr={0726106}}
\bptok{imsref}%
\end{barticle}
%
\endbibitem

%b11 #&#
\bibitem{BGLR10}
%
\begin{barticle}[mr]
\bauthor{\bsnm{Beyn},~\bfnm{Wolf-J{\"u}rgen}\binits{W.-J.}},
\bauthor{\bsnm{Gess},~\bfnm{Benjamin}\binits{B.}},
\bauthor{\bsnm{Lescot},~\bfnm{Paul}\binits{P.}} \AND
\bauthor{\bsnm{R{\"o}ckner},~\bfnm{Michael}\binits{M.}}
(\byear{2011}).
\btitle{The global random attractor for a class of stochastic porous media
equations}.
\bjournal{Comm. Partial Differential Equations}
\bvolume{36}
\bpages{446--469}.
\bid{doi={10.1080/03605302.2010.523919}, issn={0360-5302}, mr={2763333}}
\bptok{imsref}%
\end{barticle}
%
\endbibitem

%b12 #&#
\bibitem{BL06}
%
\begin{barticle}[mr]
\bauthor{\bsnm{Brze{\'z}niak},~\bfnm{Zdzis{\l}aw}\binits{Z.}} \AND
\bauthor{\bsnm{Li},~\bfnm{Yuhong}\binits{Y.}}
(\byear{2006}).
\btitle{Asymptotic compactness and absorbing sets for 2{D} stochastic
{N}avier--{S}tokes equations on some unbounded domains}.
\bjournal{Trans. Amer. Math. Soc.}
\bvolume{358}
\bpages{5587--5629 (electronic)}.
\bid{doi={10.1090/S0002-9947-06-03923-7}, issn={0002-9947}, mr={2238928}}
\bptok{imsref}%
\end{barticle}
%
\endbibitem

%b13 #&#
\bibitem{BM01}
%
\begin{barticle}[mr]
\bauthor{\bsnm{Buckdahn},~\bfnm{Rainer}\binits{R.}} \AND
\bauthor{\bsnm{Ma},~\bfnm{Jin}\binits{J.}}
(\byear{2001}).
\btitle{Stochastic viscosity solutions for nonlinear stochastic partial
differential equations. {I}}.
\bjournal{Stochastic Process. Appl.}
\bvolume{93}
\bpages{181--204}.
\bid{doi={10.1016/S0304-4149(00)00093-4}, issn={0304-4149}, mr={1828772}}
\bptok{imsref}%
\end{barticle}
%
\endbibitem

%b14 #&#
\bibitem{BM01-2}
%
\begin{barticle}[mr]
\bauthor{\bsnm{Buckdahn},~\bfnm{Rainer}\binits{R.}} \AND
\bauthor{\bsnm{Ma},~\bfnm{Jin}\binits{J.}}
(\byear{2001}).
\btitle{Stochastic viscosity solutions for nonlinear stochastic partial
differential equations. {II}}.
\bjournal{Stochastic Process. Appl.}
\bvolume{93}
\bpages{205--228}.
\bid{doi={10.1016/S0304-4149(00)00092-2}, issn={0304-4149}, mr={1831830}}
\bptok{imsref}%
\end{barticle}
%
\endbibitem

%b15 #&#
\bibitem{CFO11}
%
\begin{barticle}[mr]
\bauthor{\bsnm{Caruana},~\bfnm{Michael}\binits{M.}},
\bauthor{\bsnm{Friz},~\bfnm{Peter~K.}\binits{P.~K.}} \AND
\bauthor{\bsnm{Oberhauser},~\bfnm{Harald}\binits{H.}}
(\byear{2011}).
\btitle{A (rough) pathwise approach to a class of non-linear
stochastic partial
differential equations}.
\bjournal{Ann. Inst. H. Poincar\'e Anal. Non Lin\'eaire}
\bvolume{28}
\bpages{27--46}.
\bid{doi={10.1016/j.anihpc.2010.11.002}, issn={0294-1449}, mr={2765508}}
\bptok{imsref}%
\end{barticle}
%
\endbibitem

%b16 #&#
\bibitem{C02}
%
\begin{bbook}[mr]
\bauthor{\bsnm{Chueshov},~\bfnm{Igor}\binits{I.}}
(\byear{2002}).
\btitle{Monotone Random Systems Theory and Applications}.
\bseries{Lecture Notes in Math.}
\bvolume{1779}.
\bpublisher{Springer}, \blocation{Berlin}.
\bid{doi={10.1007/b83277}, mr={1902500}}
\bptok{imsref}%
\end{bbook}
%
\endbibitem

%b17 #&#
\bibitem{CDF97}
%
\begin{barticle}[mr]
\bauthor{\bsnm{Crauel},~\bfnm{Hans}\binits{H.}},
\bauthor{\bsnm{Debussche},~\bfnm{Arnaud}\binits{A.}} \AND
\bauthor{\bsnm{Flandoli},~\bfnm{Franco}\binits{F.}}
(\byear{1997}).
\btitle{Random attractors}.
\bjournal{J. Dynam. Differential Equations}
\bvolume{9}
\bpages{307--341}.
\bid{doi={10.1007/BF02219225}, issn={1040-7294}, mr={1451294}}
\bptok{imsref}%
\end{barticle}
%
\endbibitem

%b18 #&#
\bibitem{CF94}
%
\begin{barticle}[mr]
\bauthor{\bsnm{Crauel},~\bfnm{Hans}\binits{H.}} \AND
\bauthor{\bsnm{Flandoli},~\bfnm{Franco}\binits{F.}}
(\byear{1994}).
\btitle{Attractors for random dynamical systems}.
\bjournal{Probab. Theory Related Fields}
\bvolume{100}
\bpages{365--393}.
\bid{doi={10.1007/BF01193705}, issn={0178-8051}, mr={1305587}}
\bptok{imsref}%
\end{barticle}
%
\endbibitem

%b19 #&#
\bibitem{DPR04}
%
\begin{barticle}[mr]
\bauthor{\bsnm{Da~Prato},~\bfnm{Giuseppe}\binits{G.}} \AND
\bauthor{\bsnm{R{\"o}ckner},~\bfnm{Michael}\binits{M.}}
(\byear{2004}).
\btitle{Weak solutions to stochastic porous media equations}.
\bjournal{J. Evol. Equ.}
\bvolume{4}
\bpages{249--271}.
\bid{doi={10.1007/s00028-003-0140-9}, issn={1424-3199}, mr={2059304}}
\bptok{imsref}%
\end{barticle}
%
\endbibitem

%b20 #&#
\bibitem{DPRRW06}
%
\begin{barticle}[mr]
\bauthor{\bsnm{Da~Prato},~\bfnm{G.}\binits{G.}},
\bauthor{\bsnm{R{\"o}ckner},~\bfnm{Michael}\binits{M.}},
\bauthor{\bsnm{Rozovskii},~\bfnm{B.~L.}\binits{B.~L.}} \AND
\bauthor{\bsnm{Wang},~\bfnm{Feng-Yu}\binits{F.-Y.}}
(\byear{2006}).
\btitle{Strong solutions of stochastic generalized porous media equations:
Existence, uniqueness, and ergodicity}.
\bjournal{Comm. Partial Differential Equations}
\bvolume{31}
\bpages{277--291}.
\bid{doi={10.1080/03605300500357998}, issn={0360-5302}, mr={2209754}}
\bptok{imsref}%
\end{barticle}
%
\endbibitem

%b21 #&#
\bibitem{DK87}
%
\begin{barticle}[mr]
\bauthor{\bsnm{Diaz},~\bfnm{Jesus~Ildefonso}\binits{J.~I.}} \AND
\bauthor{\bsnm{Kersner},~\bfnm{Robert}\binits{R.}}
(\byear{1987}).
\btitle{On a nonlinear degenerate parabolic equation in infiltration or
evaporation through a porous medium}.
\bjournal{J. Differential Equations}
\bvolume{69}
\bpages{368--403}.
\bid{doi={10.1016/0022-0396(87)90125-2}, issn={0022-0396}, mr={0903393}}
\bptok{imsref}%
\end{barticle}
%
\endbibitem

%b22 #&#
\bibitem{DB83}
%
\begin{barticle}[mr]
\bauthor{\bsnm{DiBenedetto},~\bfnm{Emmanuele}\binits{E.}}
(\byear{1983}).
\btitle{Continuity of weak solutions to a general porous medium equation}.
\bjournal{Indiana Univ. Math. J.}
\bvolume{32}
\bpages{83--118}.
\bid{doi={10.1512/iumj.1983.32.32008}, issn={0022-2518}, mr={0684758}}
\bptok{imsref}%
\end{barticle}
%
\endbibitem

%b23 #&#
\bibitem{FL04}
%
\begin{barticle}[mr]
\bauthor{\bsnm{Flandoli},~\bfnm{Franco}\binits{F.}} \AND
\bauthor{\bsnm{Lisei},~\bfnm{Hannelore}\binits{H.}}
(\byear{2004}).
\btitle{Stationary conjugation of flows for parabolic {SPDE}s with
multiplicative noise and some applications}.
\bjournal{Stoch. Anal. Appl.}
\bvolume{22}
\bpages{1385--1420}.
\bid{doi={10.1081/SAP-200029481}, issn={0736-2994}, mr={2095065}}
\bptok{imsref}%
\end{barticle}
%
\endbibitem

%b24 #&#
\bibitem{FV10}
%
\begin{bbook}[mr]
\bauthor{\bsnm{Friz},~\bfnm{Peter~K.}\binits{P.~K.}} \AND
\bauthor{\bsnm{Victoir},~\bfnm{Nicolas~B.}\binits{N.~B.}}
(\byear{2010}).
\btitle{Multidimensional Stochastic Processes as Rough Paths: Theory
and Applications}.
\bseries{Cambridge Studies in Advanced Mathematics}
\bvolume{120}.
\bpublisher{Cambridge Univ. Press}, \blocation{Cambridge}.
\bid{mr={2604669}}
\bptok{imsref}%
\end{bbook}
%
\endbibitem

%b25 #&#
\bibitem{G11c-2}
%
\begin{barticle}[mr]
\bauthor{\bsnm{Gess},~\bfnm{Benjamin}\binits{B.}}
(\byear{2012}).
\btitle{Random attractors for stochastic porous media equations
perturbed by
space--time linear multiplicative noise}.
\bjournal{C. R. Math. Acad. Sci. Paris}
\bvolume{350}
\bpages{299--302}.
\bid{doi={10.1016/j.crma.2012.02.004}, issn={1631-073X}, mr={2911943}}
\bptok{imsref}%
\end{barticle}
%
\endbibitem

%b26 #&#
\bibitem{G12}
%
\begin{barticle}[mr]
\bauthor{\bsnm{Gess},~\bfnm{Benjamin}\binits{B.}}
(\byear{2012}).
\btitle{Strong solutions for stochastic partial differential equations of
gradient type}.
\bjournal{J. Funct. Anal.}
\bvolume{263}
\bpages{2355--2383}.
\bid{doi={10.1016/j.jfa.2012.07.001}, issn={0022-1236}, mr={2964686}}
\bptok{imsref}%
\end{barticle}
%
\endbibitem

%b27 #&#
\bibitem{G13}
%
\begin{barticle}[mr]
\bauthor{\bsnm{Gess},~\bfnm{Benjamin}\binits{B.}}
(\byear{2013}).
\btitle{Random attractors for degenerate stochastic partial differential
equations}.
\bjournal{J.~Dynam. Differential Equations}
\bvolume{25}
\bpages{121--157}.
\bid{doi={10.1007/s10884-013-9294-5}, issn={1040-7294}, mr={3027636}}
\bptok{imsref}%
\end{barticle}
%
\endbibitem

%b28 #&#
\bibitem{GLR11}
%
\begin{barticle}[mr]
\bauthor{\bsnm{Gess},~\bfnm{Benjamin}\binits{B.}},
\bauthor{\bsnm{Liu},~\bfnm{Wei}\binits{W.}} \AND
\bauthor{\bsnm{R{\"o}ckner},~\bfnm{Michael}\binits{M.}}
(\byear{2011}).
\btitle{Random attractors for a class of stochastic partial differential
equations driven by general additive noise}.
\bjournal{J. Differential Equations}
\bvolume{251}
\bpages{1225--1253}.
\bid{doi={10.1016/j.jde.2011.02.013}, issn={0022-0396}, mr={2812588}}
\bptok{imsref}%
\end{barticle}
%
\endbibitem

%b29 #&#
\bibitem{G89}
%
\begin{barticle}[mr]
\bauthor{\bsnm{Gilding},~\bfnm{B.~H.}\binits{B.~H.}}
(\byear{1989}).
\btitle{Improved theory for a nonlinear degenerate parabolic equation}.
\bjournal{Ann. Scuola Norm. Sup. Pisa Cl. Sci. (4)}
\bvolume{16}
\bpages{165--224}.
\bid{issn={0391-173X}, mr={1041895}}
\bptok{imsref}%
\end{barticle}
%
\endbibitem

%b30 #&#
\bibitem{GT10}
%
\begin{barticle}[mr]
\bauthor{\bsnm{Gubinelli},~\bfnm{Massimiliano}\binits{M.}} \AND
\bauthor{\bsnm{Tindel},~\bfnm{Samy}\binits{S.}}
(\byear{2010}).
\btitle{Rough evolution equations}.
\bjournal{Ann. Probab.}
\bvolume{38}
\bpages{1--75}.
\bid{doi={10.1214/08-AOP437}, issn={0091-1798}, mr={2599193}}
\bptok{imsref}%
\end{barticle}
%
\endbibitem

%b31 #&#
\bibitem{H11}
%
\begin{barticle}[mr]
\bauthor{\bsnm{Hairer},~\bfnm{M.}\binits{M.}}
(\byear{2011}).
\btitle{Rough stochastic {PDE}s}.
\bjournal{Comm. Pure Appl. Math.}
\bvolume{64}
\bpages{1547--1585}.
\bid{doi={10.1002/cpa.20383}, issn={0010-3640}, mr={2832168}}
\bptok{imsref}%
\end{barticle}
%
\endbibitem

%b32 #&#
\bibitem{K06}
%
\begin{barticle}[mr]
\bauthor{\bsnm{Kim},~\bfnm{Jong~Uhn}\binits{J.~U.}}
(\byear{2006}).
\btitle{On the stochastic porous medium equation}.
\bjournal{J. Differential Equations}
\bvolume{220}
\bpages{163--194}.
\bid{doi={10.1016/j.jde.2005.02.006}, issn={0022-0396}, mr={2182084}}
\bptok{imsref}%
\end{barticle}
%
\endbibitem

%b33 #&#
\bibitem{LSU67}
%
\begin{bbook}[mr]
\bauthor{\bsnm{Lady{\v{z}}enskaja},~\bfnm{O.~A.}\binits{O.~A.}},
\bauthor{\bsnm{Solonnikov},~\bfnm{V.~A.}\binits{V.~A.}} \AND
\bauthor{\bsnm{Ural'ceva},~\bfnm{N.~N.}\binits{N.~N.}}
(\byear{1968}).
\btitle{Linear and Quasilinear Equations of Parabolic Type}.
\bseries{Translations of Mathematical
Monographs}
\bvolume{23}.
\bpublisher{Amer. Math. Soc.}, \blocation{Providence, RI.}
\bid{mr={0241822}}
\bptnote{check year}%
\bptok{imsref}%
\end{bbook}
%
\endbibitem

%b34 #&#
\bibitem{LG08}
%
\begin{barticle}[mr]
\bauthor{\bsnm{Li},~\bfnm{Yangrong}\binits{Y.}} \AND
\bauthor{\bsnm{Guo},~\bfnm{Boling}\binits{B.}}
(\byear{2008}).
\btitle{Random attractors for quasi-continuous random dynamical
systems and
applications to stochastic reaction-diffusion equations}.
\bjournal{J.~Differential Equations}
\bvolume{245}
\bpages{1775--1800}.
\bid{doi={10.1016/j.jde.2008.06.031}, issn={0022-0396}, mr={2433486}}
\bptok{imsref}%
\end{barticle}
%
\endbibitem

%b35 #&#
\bibitem{L96}
%
\begin{bbook}[mr]
\bauthor{\bsnm{Lieberman},~\bfnm{Gary~M.}\binits{G.~M.}}
(\byear{1996}).
\btitle{Second Order Parabolic Differential Equations}.
\bpublisher{World Scientific}, \blocation{River Edge, NJ}.
\bid{mr={1465184}}
\bptok{imsref}%
\end{bbook}
%
\endbibitem

%b36 #&#
\bibitem{LS00}
%
\begin{barticle}[mr]
\bauthor{\bsnm{Lions},~\bfnm{Pierre-Louis}\binits{P.-L.}} \AND
\bauthor{\bsnm{Souganidis},~\bfnm{Panagiotis~E.}\binits{P.~E.}}
(\byear{2000}).
\btitle{Fully nonlinear stochastic pde with semilinear stochastic dependence}.
\bjournal{C. R. Acad. Sci. Paris S\'er. I Math.}
\bvolume{331}
\bpages{617--624}.
\bid{doi={10.1016/S0764-4442(00)00583-8}, issn={0764-4442}, mr={1799099}}
\bptok{imsref}%
\end{barticle}
%
\endbibitem

%b37 #&#
\bibitem{LS02}
%
\begin{barticle}[auto:STB|2013/10/03|07:39:35]
\bauthor{\bsnm{Lions},~\bfnm{Pierre-Louis}\binits{P.-L.}} \AND
\bauthor{\bsnm{Souganidis},~\bfnm{Panagiotis~E.}\binits{P.~E.}}
(\byear{2002}).
\btitle{Viscosity solutions of fully nonlinear stochastic partial differential
equations}.
\bjournal{S\=urikaisekikenky\=usho K\=oky\=uroku}
\bvolume{1287}
\bpages{58--65}.
\bptok{imsref}%
\end{barticle}
%
\endbibitem

%b38 #&#
\bibitem{MZZ08}
%
\begin{barticle}[mr]
\bauthor{\bsnm{Mohammed},~\bfnm{Salah-Eldin~A.}\binits{S.-E.~A.}},
\bauthor{\bsnm{Zhang},~\bfnm{Tusheng}\binits{T.}} \AND
\bauthor{\bsnm{Zhao},~\bfnm{Huaizhong}\binits{H.}}
(\byear{2008}).
\btitle{The stable manifold theorem for semilinear stochastic evolution
equations and stochastic partial differential equations}.
\bjournal{Mem. Amer. Math. Soc.}
\bvolume{196}
\bpages{vi+105}.
\bid{doi={10.1090/memo/0917}, issn={0065-9266}, mr={2459571}}
\bptok{imsref}%
\end{barticle}
%
\endbibitem

%b39 #&#
\bibitem{P04}
%
\begin{bbook}[mr]
\bauthor{\bsnm{Protter},~\bfnm{Philip~E.}\binits{P.~E.}}
(\byear{2004}).
\btitle{Stochastic Integration and Differential Equations: Stochastic
Modelling and Applied Probability},
\bedition{2nd} ed.
\bseries{Applications of Mathematics (New~York)}
\bvolume{21}.
\bpublisher{Springer}, \blocation{Berlin}.
\bid{mr={2020294}}
\bptok{imsref}%
\end{bbook}
%
\endbibitem

%b40 #&#
\bibitem{RRW07}
%
\begin{barticle}[mr]
\bauthor{\bsnm{Ren},~\bfnm{Jiagang}\binits{J.}},
\bauthor{\bsnm{R{\"o}ckner},~\bfnm{Michael}\binits{M.}} \AND
\bauthor{\bsnm{Wang},~\bfnm{Feng-Yu}\binits{F.-Y.}}
(\byear{2007}).
\btitle{Stochastic generalized porous media and fast diffusion equations}.
\bjournal{J. Differential Equations}
\bvolume{238}
\bpages{118--152}.
\bid{doi={10.1016/j.jde.2007.03.027}, issn={0022-0396}, mr={2334594}}
\bptok{imsref}%
\end{barticle}
%
\endbibitem

%b41 #&#
\bibitem{RW08}
%
\begin{barticle}[mr]
\bauthor{\bsnm{R{\"o}ckner},~\bfnm{Michael}\binits{M.}} \AND
\bauthor{\bsnm{Wang},~\bfnm{Feng-Yu}\binits{F.-Y.}}
(\byear{2008}).
\btitle{Non-monotone stochastic generalized porous media equations}.
\bjournal{J. Differential Equations}
\bvolume{245}
\bpages{3898--3935}.
\bid{doi={10.1016/j.jde.2008.03.003}, issn={0022-0396}, mr={2462709}}
\bptok{imsref}%
\end{barticle}
%
\endbibitem

%b42 #&#
\bibitem{S83b}
%
\begin{barticle}[mr]
\bauthor{\bsnm{Sacks},~\bfnm{Paul~E.}\binits{P.~E.}}
(\byear{1983}).
\btitle{The initial and boundary value problem for a class of degenerate
parabolic equations}.
\bjournal{Comm. Partial Differential Equations}
\bvolume{8}
\bpages{693--733}.
\bid{doi={10.1080/03605308308820283}, issn={0360-5302}, mr={0700733}}
\bptok{imsref}%
\end{barticle}
%
\endbibitem

%b43 #&#
\bibitem{S92}
%
\begin{bmisc}[auto:STB|2013/10/03|07:39:35]
\bauthor{\bsnm{Schmalfuss},~\bfnm{Bj{\"o}rn}\binits{B.}}
(\byear{1992}).
\bhowpublished{Backward cocycle and attractors of stochastic differential
equations. International Seminar on Applied Mathematics---Nonlinear
Dynamics: Attractor Approximation and Global Behavior (V. Reitmann, T.
Riedrich and N. Koksch, eds.) 185--192. Technische Universit\"at Dresden}.
\bptok{imsref}%
\end{bmisc}
%
\endbibitem

%b44 #&#
\bibitem{vNV06}
%
\begin{bincollection}[mr]
\bauthor{\bparticle{van} \bsnm{Neerven},~\bfnm{Jan}\binits{J.}}
\AND
\bauthor{\bsnm{Veraar},~\bfnm{Mark~C.}\binits{M.~C.}}
(\byear{2006}).
\btitle{On the stochastic {F}ubini theorem in infinite dimensions}.
In \bbooktitle{Stochastic Partial Differential Equations and
Applications---{VII}}.
\bseries{Lect. Notes Pure Appl. Math.}
\bvolume{245}
\bpages{323--336}.
\bpublisher{Chapman \& Hall/CRC}, \baddress{Boca Raton, FL}.
\bid{doi={10.1201/9781420028720.ch26}, mr={2227239}}
\bptok{imsref}%
\end{bincollection}
%
\endbibitem

%b45 #&#
\bibitem{V07}
%
\begin{bbook}[mr]
\bauthor{\bsnm{V{\'a}zquez},~\bfnm{Juan~Luis}\binits{J.~L.}}
(\byear{2007}).
\btitle{The Porous Medium Equation: Mathematical Theory}.
\bpublisher{Clarendon}, \blocation{Oxford}.
\bid{mr={2286292}}
\bptok{imsref}%
\end{bbook}
%
\endbibitem

\end{thebibliography}
\end{document}